\newtheorem{theorem}{Theorem}[section]
\newtheorem{corollary}[theorem]{Corollary}
\newtheorem{lemma}[theorem]{Lemma}
\newtheorem{proposition}[theorem]{Proposition}
\theoremstyle{definition}
\newtheorem{problem}[theorem]{Problem}
\newtheorem{example}[theorem]{Example}
\newtheorem{remark}[theorem]{Remark}
\numberwithin{equation}{subsection}
\newcommand{\Core}{\operatorname{Core}}
\newcommand{\Aut}{\operatorname{Aut}}
\newcommand{\Conj}{\operatorname{Conj}}
\newcommand{\Inn}{\operatorname{Inn}}
\newcommand{\R}{\operatorname{R}}
\begin{document}

\title{General constructions of biquandles and their symmetries}

\author{Valeriy Bardakov\and Timur Nasybullov\and Mahender Singh}

\address{Tomsk State University, pr. Lenina 36, 634050 Tomsk, Russia, Sobolev Institute of Mathematics, Acad. Koptyug avenue 4, 630090 Novosibirsk, Russia, Novosibirsk State University, Pirogova~1, 630090 Novosibirsk, Russia, Novosibirsk State Agricultural University, Dobrolyubova 160, 630039 Novosibirsk, Russia.}
\email{bardakov@math.nsc.ru}

\address{Katholieke Universiteit Leuven KULAK, 53  E.~Sabbelaan, 8500, Kortrijk, Belgium.}
\email{timur.nasybullov@mail.ru}

\address{Department of mathematical sciences, Indian Institute of Science Education and Research (IISER) Mohali, Sector 81,  S. A. S. Nagar, P. O. Manauli, 140306, Punjab, India.}
\email{mahender@iisermohali.ac.in}

\keywords{Quandle, biquandle, quandle covering, automorphism, virtual knot, knot invariant, Yang-Baxter equation}
\subjclass[2010]{57M25, 57M27, 20N05, 	16T25}

\begin{abstract}
Biquandles are algebraic objects with two binary operations whose axioms encode the generalized Reidemeister moves for virtual knots and links. These objects also provide set theoretic solutions of the well-known Yang-Baxter equation. The first half of this paper proposes some natural constructions of biquandles from groups and from their simpler counterparts, namely, quandles. We completely determine all words in the free group on two generators that give rise to (bi)quandle structures on all groups. We give some novel constructions of biquandles on unions and products of quandles, including what we refer as the holomorph biquandle of a quandle. These constructions give a wealth of solutions of the Yang-Baxter equation. We also  show that for nice quandle coverings a biquandle structure on the base can be lifted to a biquandle structure on the covering. In the second half of the paper, we determine automorphism groups of these biquandles in terms of associated quandles showing elegant relationships between the symmetries of the underlying structures.
\end{abstract}

\maketitle

\section{Introduction}\label{sec-intro}
 A quandle is an algebraic system with a single binary operation satisfying three axioms that are algebraic analogues of the three Reidemeister moves of diagrams of knots in the $3$-sphere. Such algebraic systems were introduced independently by Joyce \cite{Joy} and Matveev \cite{Mat} as invariants for knots. More precisely,  to each oriented diagram $D_K$ of an oriented knot $K$ in the $3$-sphere one can associate a quandle $Q(K)$ which does not change on applying Reidemeister moves to the diagram $D_K$.  Joyce and Matveev proved that two knot quandles $Q(K_1)$ and $Q(K_2)$ are isomorphic if and only if $K_1$ and $K_2$ are weakly equivalent, i.~e. there exists a homeomorphism of the $3$-sphere which maps $K_1$ onto $K_2$.  Over the years, quandles have been investigated by various authors for constructing newer invariants for knots and links (see, for example, \cite{Carter, CatNas, Fenn-Rourke, Kamada20122, NanSinSin, Nelson}). In order to obtain reasonably strong knot invariants from quandles, it is necessary to understand them from algebraic point of view. Algebraic properties of quandles including their automorphisms and residual properties have been investigated, for example, in \cite{BDS, BNS, BSS, BiaBon, Clark, HosSha, JPSZ, NelWon, Nosaka}. A (co)homology theory for quandles and racks has been developed in \cite{Carter2, Fenn1, Fenn2, Nosaka2013}, which, as applications has led to stronger invariants for knots and links.  Many new constructions of quandles have been introduced, for example, in \cite{BarNas, BonCraWhi, Crans-Nelson}.

Virtual knot theory was introduced by Kauffman in \cite{Kauff}. In the same paper Kauffman extended several known knot invariants from classical knot theory to virtual knot theory, in particular, he defined the quandle $Q(K)$ of a virtual knot $K$. Despite the fact that the fundamental quandle is an almost complete invariant for classical links, its extension to virtual links is a relatively weak invariant. This led Fenn, Jordan-Santana and Kauffman \cite{Fenn} to introduce the notion of a biquandle which generalizes the notion of a quandle, and gives a powerful invariant for virtual links. We note that the idea of a birack first appeared in \cite{Fenn-Rourke-Sanderson}. A still unresolved conjecture from \cite{Fenn} states that the knot biquandle is in some sense a complete invariant for virtual knots just as the knot quandle is in a sense a complete invariant for classical knots. It is unknown whether there exist classical or surface knots which have isomorphic fundamental quandles but distinct fundamental biquandles. Ashihara \cite{Ashihara} proved that two ribbon $2$-knots or ribbon torus-knots with isomorphic fundamental quandles have isomorphic fundamental biquandles.

Besides their importance in constructing invariants for virtual knots and links, biquandles can be used for constructing representations of (virtual) braid groups \cite{BarNas2, Fenn} and solutions of the well-known Yang-Baxter equation \cite{CarElh} which has wide applications  beyond mathematics (for other algebraic systems which give solutions of the Yang-Baxter equation, see, for example, \cite{GuaVen, Nas, Rump}). 

All applications of biquandles mentioned above require an extensive set of examples of these structures, and  hence it is desirable to have some canonical constructions of biquandles. In a recent preprint \cite[Theorem 3.2]{Horvat}, it is shown that every biquandle $B$ can be constructed from some quandle $\mathcal{Q}(B)$ (called the associated quandle of $B$) using a specific family of automorphisms of $\mathcal{Q}(B)$ (called a biquandle structure). The idea is used further in \cite{Horvat-Crans} to construct biquandles on the set of homomorphisms between two given biquandles. Note that it is quite difficult to construct a non-trivial biquandle using  \cite[Theorem 3.2]{Horvat} since there seems no natural way to find a non-trivial biquandle structure on a given quandle.
 
The purpose of this paper is to give natural constructions of biquandles from groups and from their simpler counterparts, namely, quandles. We give some novel constructions of biquandles on unions and products of quandles, including what we call the holomorph biquandle of a given quandle. These constructions give a wealth of solutions of the Yang-Baxter equation. Analogous to the covering space theory for topological spaces, Eisermann \cite{Eisermann} developed a theory of quandle coverings which he used to study knot invariants. We also  show that for some good quandle coverings a biquandle structure on the base can be lifted to a biquandle structure on the covering. Since the fundamental biquandles in some sense encode symmetries of virtual knots, its worthwhile to explore symmetries of other biquandles. We determine automorphism groups of these biquandles in terms of associated quandles, and our results exhibit elegant relationships between the symmetries of the underlying structures. 

The paper is organised as follows. In Section \ref{sec-prelim}, we recall necessary preliminaries on quandles and biquandles which also set notations and conventions for the rest of the paper. We give some new  generalizations of Alexander and dihedral biquandles  (Proposition \ref{gen-dihedral-biquandle}). In Section \ref{sec-verbal}, we determine all words in the free group on two generators that give rise to quandle and biquandle structures on all groups. A complete classification of words yielding quandle structures is given in Proposition~\ref{p7.2}, and that of pair of words yielding biquandle structures is given in Theorem~\ref{ver-biquandle-thm}. In Section \ref{newconstructionsnew}, we give new constructions of biquandles from quandles. We construct biquandle structures on unions of two quandles that act on each other via quandle automorphisms (Theorem~\ref{gen-biquandle-union}). As an application of this construction to virtual knot theory, we give an example of a biquandle structure on a trivial quandle whose associated biquandle can distinguish different virtual links with the same number of components, whereas, the trivial quandle obviously cannot distinguish any two links with the same number of components (Example \ref{trivial-biquandle-distinguish}). We construct biquandle structures on product of two quandles (Theorem \ref{action-product-biquandle}), which, in particular, yields a biquandle what we refer as the holomorph biquandle (Remark \ref{rem-holomorph}). We also prove that for nice quandle coverings a biquandle structure on the base can be lifted to a biquandle structure on the covering (Theorem \ref{lifting theorem}). It has been an open question to  give an explicit model for a free biquandle \cite{Fenn}. We believe that a free biquandle should be obtainable using a biquandle structure on a free quandle, and prove that every biquandle structure on the trivial quandle $T_n$ can be lifted to a biquandle structure on the free quandle $FQ_n$ (Proposition~\ref{lifting}). In Section \ref{sec-automorphisms}, we investigate structure of automorphism groups of quandles constructed in the preceding sections. We begin with results on automorphisms of  generalised dihedral and Alexander biquandles  (Proposition \ref{auto-gen-dihedral-biquandle} and Proposition \ref{auto-gen-alexander}). For constant actions case, we completely determine the automorphism group of the union biquandle in terms of automorphism groups of underlying quandles (Theorem~\ref{auto-union-biquandle}). Subsection \ref{sec-product-biquandle} gives a detailed analysis of automorphism groups of biquandles on products of quandles (Proposition~\ref{auto-product-subgroup}, Proposition~\ref{shsequence} and Theorem~\ref{automconprod}). As a consequence, it is shown that the automorphism group of the holomorph biquandle coincides with the automorphism group of the underlying quandle (Corollary \ref{authol}), which, in turn shows that there exists a sequence of finite biquandles $B_1,B_2,\dots$ such that $|B_k|\to\infty$ but $|{\rm Aut}(B_k)|/|B_k|\to 0$ (Corollary \ref{seq-biquandles}). The paper concludes by relating the automorphism groups of the associated biquandle of a quandle and that of its covering quandle (Proposition \ref{biquandle-auto-in-lifting}). Several open problems are formulated throughout the paper.

\section{Quandles and biquandles}\label{sec-prelim}
In this section we give necessary preliminaries on quandles and biquandles.

\subsection{Racks and quandles}\label{quandlesprem}
A \textit{rack} $R$ is an algebraic system with one binary algebraic operation $(x,y)\mapsto x*y$ which satisfies the following two axioms:
\begin{itemize}
\item[(r1)] the map $S_x:y\mapsto y*x$ is a bijection of $R$ for all $x\in R$,
\item[(r2)] $(x*{y})*z=(x*z)*({y*z})$ for all $x,y,z\in R$.
\end{itemize}
Axioms (r1) and (r2) imply that the map $S_x$ is an automorphism of $R$ for all $x\in R$ . A rack $R$ is said to be \textit{involutory} if $S_x^2=id$ for all $x\in R$. A rack $R$ is called \textit{faithful} if the map $x\mapsto S_x$ is injective. The group ${\rm Inn}(R)=\langle S_x~|~x\in R\rangle$  generated by all $S_x$ for $x\in R$ is called  the \textit{group of inner automorphisms} of $R$. From axioms (r2) follows that the equality
\begin{equation}\label{innerrules}
S_{x*y}=S_yS_xS_y^{-1}
\end{equation}
holds for all $x,y\in Q$. The group ${\rm Inn}(R)$ acts on $R$ in the natural way. The orbit of an element $x\in R$ under this action is denoted by ${\rm Orb}(x)$ and is called the \textit{orbit} of $x$. If $R={\rm Orb}(x)$ for some $x\in R$, then $R$ is said to be \textit{connected}. For the sake of simplicity, for elements $x,y\in R$, we denote by $y*^{-1}x$ the element $S_x^{-1}(y)$, and sometimes we denote by $y*^1x=y*x$. Generally, the operation $*$ is not associative. For elements
$x_1,\dots,x_n\in R$ and integers $\varepsilon_2,\dots,\varepsilon_n\in\{\pm1\}$ we denote by
$$x_1*^{\varepsilon_2}x_2*^{\varepsilon_3}x_3*^{\varepsilon_4}\dots *^{\varepsilon_n}x_n=((\cdots((x_1*^{\varepsilon_2}x_2)*^{\varepsilon_3}x_3)*^{\varepsilon_4}\cdots)*^{\varepsilon_n}x_n).$$
	A rack $R$ which satisfies the additional axiom
\begin{itemize}
\item[(q1)] $x*x=x$ for all $x\in R$
\end{itemize}
is called a \textit{quandle}. The simplest example of a quandle is the trivial quandle on a set $X$, that is the quandle $Q=(X,*)$, where $x*y=x$ for all $x,y\in X$. If $|X|=n$, then the trivial quandle on $X$ is denoted by $T_n$. A lot of examples of quandles come from groups. 
\begin{example}\label{conjugationex}Let $G$ be a group. For elements $x,y\in G$, denote by $x^y=y^{-1}xy$ the conjugate of $x$ by $y$. For an arbitrary integer $n$ the set $G$ with the operation $x*y=x^{y^n}=y^{-n}xy^n$ forms a quandle. This quandle is called the \textit{$n$-th conjugation quandle of the group $G$} and is denoted by ${\rm Conj}_n(G)$. For the sake of simplicity we denote the first conjugation quandle ${\rm Conj}_1(G)$ by ${\rm Conj}(G)$. 
\end{example}
It can be seen that ${\rm Conj}:\textbf{ \text{Grp}}\to \textbf{\text{Qnd}}$ given by $G\mapsto{\rm Conj}(G)$ is a functor from the category \textbf{Grp} of groups to the category \textbf{Qnd} of quandles. 
\begin{example}\label{coreex}
Let $G$ be a group, and $x*y=yx^{-1}y$ for all $x,y\in G$. Then $G$ with  the operation $*$ is a quandle, which is called the \textit{core quandle of a group $G$} and is denoted by $\Core(G)$. In particular, if $G$ is an abelian group, then the quandle $\Core(G)$ is called the \textit{Takasaki quandle of the abelian group $G$} and is denoted by ${\rm T}(G)$. Such quandles were studied by Takasaki in \cite{Takasaki}. If $G=\mathbb{Z}_n$ is the cyclic group of order $n$, then the Takasaki quandle ${\rm T}(\mathbb{Z}_n)$ is called the \textit{dihedral quandle on $n$ elements} and is denoted by ${\rm R}_n$. If $n$ is odd, then ${\rm R}_n$ is a faithful connected quandle.
\end{example}
\begin{example}
If $\varphi$ is an automorphism of a group $G$, then $G$ with the operation $x*y=\varphi(xy^{-1})y$ forms a quandle which is denoted by ${\rm Alex}(G,\varphi)$. Such quandles are referred in the literature as {\it generalized Alexander quandles}.  Alexander quandles were studied, for example, in \cite{BDS, Clark1, Clark}. 
\end{example}

For more examples of quandles arising from groups see, for example, \cite{BarNas}.

For a quandle $Q$ denote by ${\rm Adj}(Q)$ the group with the set of generators $Q$ and the set of relations 
$$x*y=y x y^{-1}$$ 
for all $x,y\in Q$. The group ${\rm Adj}(Q)$ is called the \textit{adjoint group} or the \textit{enveloping group} of the quandle $Q$. From equality (\ref{innerrules}) follows that the map $\xi$ which maps the generator $x\in Q$ of ${\rm Adj}(Q)$ to the generator $S_x$ of ${\rm Inn}(Q)$ is a group homomorphism 
$$\xi:{\rm Adj}(Q)\to {\rm Inn}(Q).$$ 
Thus, the group ${\rm Adj}(Q)$ acts on $Q$ by the rule: 
$$g\cdot x=\xi(g)(x)$$ 
for $g\in {\rm Adj}(Q)$, $x\in Q$. We can see that ${\rm Adj}: \textbf{Qnd} \to  \textbf{Grp}$ given by $Q\mapsto {\rm Adj}(Q)$ is a functor from the category \textbf{Qnd} of quandles to the category \textbf{Grp} of groups. Moreover, this functor  is the left adjoint of the functor ${\rm Conj}: \textbf{Grp} \to \textbf{Qnd}$.

\textit{The free quandle} on a set $X\neq\varnothing$ is a quandle  $FQ(X)$ together with a map $\varphi_Q : X \to FQ(X)$  such that for every  map $\rho : X \to Q$, where $Q$ is a quandle, there exists a
unique quandle homomorphism $\overline{\rho} : FQ(X) \to Q$ such that $\overline{\rho}\varphi_Q = \rho$. The free quandle is unique up to isomorphism. The following construction of the free quandle $FQ(X)$ on the set $X$ of generators is introduced in \cite{Fenn-Rourke, Kamada2017} (see also~\cite{BarNas}). Let $F(X)$ be the free group with the free generators $X$. On the set $X\times F(X)$ denote by $\sim$ the equivalence relation $(a,w)\sim(a,aw)$ for $a\in X$, $w\in F(X)$. On the set of equivalence classes $X\times F(X)/_{\sim}$ define the operation
$$ [(a, u)] * [(b, v)] = [(a, u v^{-1} b v)]$$
for $a, b \in X$, $u, v \in F(X)$. Here $[(a, u)]$ denotes the equivalence class of $(a,u)$. The set $X\times F(X)/_{\sim}$ with the operation $*$ is the free quandle on the set $X$.  The free quandle $FQ(X)$ is a subquandle of ${\rm Conj}(F(X))$ consisting of conjugacy classes of the free generators $X$ of $F(X)$  \cite[Theorem 4.1]{Joy}. If $X$ has $n$ elements, then the free quandle $FQ(X)$ is denoted by $FQ_n$.

Quandles were introduced in \cite{Joy, Mat} as an invariant for classical links. Kauffman \cite{Kauff} extended this invariant  to virtual knots and links. Let $K$ be a virtual link, and $D_K$ be a diagram of $K$. A strand of $D_K$ going from one crossing (classical or virtual) to another crossing (classical or virtual) is called an \textit{arc} of $D_K$. The \textit{fundamental quandle} $Q(K)$ of a virtual link $K$  can be found from the knot diagram $D_K$ of $K$ in the following way: the set of generators of $Q(K)$ is the set of arcs of $D_K$; the set of relations of $Q(K)$ is the set of equalities which can be written from the crossings of $D_K$ in the way depicted on Figure~\ref{quandlerelations}.
\begin{figure}[hbt!]
\noindent\centering{
\includegraphics[height=25mm]{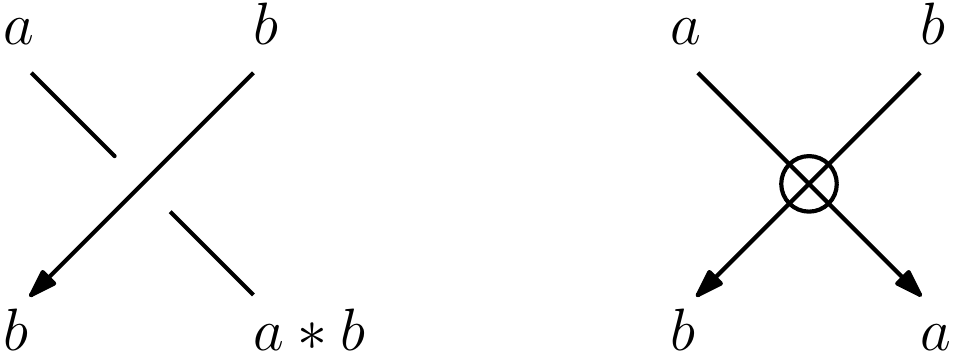}}
\caption{Labels of arcs in $D_K$.}
\label{quandlerelations}
\end{figure}

Let $Q$ be a finite quandle. A \textit{coloring of a diagram} $D_K$ of a link $K$ by elements of $Q$ is a labeling of arcs of $D_K$ by elements of $Q$. A coloring is said to be \textit{proper} if in the neighborhood of all crossing the labels of arcs are as on Figure~\ref{quandlerelations}. The number of proper colorings of $D_K$ by elements of $Q$ is an invariant of $K$ which is called the \textit{quandle-coloring invariant defined by $Q$} and is denoted by $C_Q(K)$. Note that the quandle coloring of $D_K$ by only one element of $Q$ is always proper, therefore $C_Q(K)\geq |Q|$. The number $C_Q(K)$ is equal to the number of homomorphisms from the fundamental quandle $Q(K)$ to $Q$. If $Q=T_n$ is a trivial quandle with $n$ elements, then from Figure~\ref{quandlerelations} it is clear that $C_Q(K)=n^k$, where $k$ is the number of components of $K$. Thus, the quandle-coloring invariant defined by the trivial quandle cannot distinguish any two links with the same number of components.

\subsection{Biracks and biquandles}\label{biquandlesprem}
A \textit{biquandle} $B$ is an algebraic system with two binary algebraic operations $(x,y)\mapsto x\underline{*}y$, $(x,y)\mapsto x\overline{*}y$ which satisfy the following axioms:
\begin{enumerate}
\item $x\underline{*}x=x\overline{*}x$ for all $x\in B$,
\item the maps $\alpha_y,\beta_y:B\to B$ and $S:B\times B\to B\times B$ given by $\alpha_y(x)=x\underline{*}y$, $\beta_y(x)=x\overline{*}y$, $S(x,y)=(y\overline{*}x,x\underline{*}y)$ are bijections for all $y\in B$,
\item the equalities
\begin{enumerate}
\item $(x\underline{*}y)\underline{*}(z\underline{*}y)=(x\underline{*}z)\underline{*}(y\overline{*}z)$,
\item $(x\underline{*}y)\overline{*}(z\underline{*}y)=(x\overline{*}z)\underline{*}(y\overline{*}z)$,
\item $(x\overline{*}y)\overline{*}(z\overline{*}y)=(x\overline{*}z)\overline{*}(y\underline{*}z)$
\end{enumerate}
hold for all $x,y,z\in B$.
\end{enumerate}
The algebraic system $(X,\underline{*},\overline{*})$ which satisfies only the second and the third axioms of biquandles is called a \textit{birack}. For the sake of simplicity we denote by $x\underline{*}^{-1}y=\alpha_y^{-1}(x)$, $x\overline{*}^{-1}y=\beta_y^{-1}(x)$. The operations $\underline{*}$, $\overline{*}$ are not associative in general. In order to avoid large number of parentheses, for elements $x_1,x_2,\dots,x_n\in B$ and operations $\circ_2,\circ_3,\dots,\circ_n\in\{\underline{*},\underline{*}^{-1},\overline{*},\overline{*}^{-1}\}$, we denote by
$$x_1\circ_2x_2\circ_3x_3\circ_4\dots \circ_nx_n=((\cdots((x_1\circ_2x_2)\circ_3x_3)\circ_4\cdots)\circ_nx_n).$$
A biquandle $B$ is said to be \textit{involutory} if the equalities
\begin{align}\label{inv-biquandle-condition}
x \underline{*} (y \overline{*} x)= x \underline{*} y,&& x \overline{*} (y \underline{*} x)= x \overline{*} y,&& (x \underline{*} y) \underline{*} y=x,&& (x \overline{*} y) \overline{*} y=x
\end{align}
hold for all $x, y \in B$.

If $Q=(X,*)$ is a quandle, then the algebraic system $B=(X,\underline{*},\overline{*})$ with $x\underline{*}y=x*y$, $x\overline{*}y=x$ for all $x,y\in X$ is a biquandle which is denoted by $\mathcal{B}(Q)$. Thus, the notion of a biquandle generalizes the notion of a quandle. Moreover, $\mathcal{B}:\textbf{\text{Qnd}}\to\textbf{\text{Bqnd}}$ given by $Q\mapsto \mathcal{B}(Q)$ is a functor from the category \textbf{Qnd} of quandles to the category \textbf{Bqnd} of biquandles. 
\begin{problem} What is the adjoint to the functor $\mathcal{B}$?
\end{problem}
If $B=(X,\underline{*},\overline{*})$ is a biquandle, then the map $r:B\times B \to B\times B $ given by $r(a,b\overline{*}a)=(b,a\underline{*}b)$
for $a,b\in B$ satisfies the following operator equation on $B\times B \times B$
$$(r\times id)(id \times r)(r\times id)=(id\times r)(r \times id)(id\times r)$$
which is known as the Yang-Baxter equation \cite{EtiSchSol} and has applications well beyond mathematics. The pair $(B,r)$ in this situation is called the set theoretical solution of the Yang-Baxter equation. In order to construct new set-theoretical solutions of the Yang-Baxter equation one should construct new examples of biquandles. Let us give some examples of biquandles with both operations $\underline{*},\overline{*}$ non-trivial.

\begin{example}\label{wadaex}
Let $G$ be a group, and $x\underline{*}y=y^{-1}x^{-1}y$, $x\overline{*}y=y^{-2}x$ for $x,y\in G$. Then $(G,\underline{*},\overline{*})$ is a biquandle which is called the \textit{Wada biquandle} (see \cite{FenBar,Wada}). 
\end{example}
\begin{example} If $K$ is an integral domain, $t,s$ are invertible elements of $K$, and $M$ is a left $K$-module, then the set $M$ with the operations $x\underline{*}y=tx+(s-t)y$, $x\overline{*}y=sx$ for $x,y\in M$ is a biquandle. If $K=\mathbb{Z}[t^{\pm1},s^{\pm1}]$, then  this biquandle is called the \textit{Alexander biquandle} and is denoted by ${\rm A}_{s,t}(M)$. If $K=\mathbb{Z}_n$, $M=K$ is the additive group of $K$, and $t=-1$, then this biquandle is called the \textit{dihedral biquandle} of order $n$. Alexander biquandles and dihedral biquandles were studied, for example, in \cite{CraHenNel, Horvat, LamNel, Mur}. 
\end{example}

Recall that an automorphism $\phi$ of a group $G$ is said to be \textit{central} if $x^{-1}\phi(x)$ belongs to the center of $G$ for all $x\in G$. The following proposition gives generalizations of Alexander and dihedral biquandles which we did not see anywhere in literature.

\begin{proposition}\label{gen-dihedral-biquandle}
Let $G$ be a group. 
\begin{enumerate}
\item If $\phi$ is a central automorphism of $G$, then $G$ with operations $x \underline{*} y= \phi(y) x^{-1} y$ and $x \overline{*} y= \phi(x)$ for $x, y \in G$ is a biquandle. This biquandle is denoted by $B(G,\phi)$.
\item If $\phi, \psi$ is a pair of commuting automorphisms of $G$, then $G$ with operations $x \underline{*}y =\phi(xy^{-1})\psi(y)$, $x \overline{*} y= \psi(x)$ for $x, y \in G$ is a biquandle. This biquandle is denoted by ${\rm A}_{\psi,\phi}(G)$.
\end{enumerate}
\end{proposition}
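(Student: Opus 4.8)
The plan is to verify the biquandle axioms (1), (2), (3a)--(3c) directly for each of the two families, exploiting the centrality (resp. commutativity) hypotheses at the crucial points. For part (1), write $x\dow y=\phi(y)x^{-1}y$ and $x\up y=\phi(x)$. Axiom (1) amounts to $\phi(x)x^{-1}x=\phi(x)$, which is immediate. For axiom (2): the map $\beta_y=\up y$ is just $\phi$, which is a bijection by hypothesis; the map $\alpha_y:x\mapsto\phi(y)x^{-1}y$ is a composition of the inversion anti-automorphism with left/right translations, hence a bijection; and for $S(x,y)=(y\up x,\,x\dow y)=(\phi(y),\,\phi(y)x^{-1}y)$ one checks bijectivity by noting that $\phi(y)$ recovers $y$ (apply $\phi^{-1}$), after which the second coordinate determines $x$. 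For axiom (3), the identity $x\up y=\phi(x)$ being independent of $y$ collapses (3c) to $\phi^2(x)=\phi^2(x)$, and reduces (3b) to checking that $\phi\big(\phi(y)x^{-1}y\big)=\phi\big(\phi(z)x^{-1}z\big)\dow\big(\phi(z)y^{-1}z\big)$ after expansion; here is where centrality enters — we use that $u^{-1}\phi(u)$ is central to freely move $\phi$-twisted factors past ordinary ones. Axiom (3a) is the genuinely computational one: both sides expand into words in $\phi(x),\phi(y),\phi(z)$ and their inverses interleaved with $x,y,z$, and the claim is that these words agree modulo the center. I expect (3a) to be the main obstacle, and the key trick is to replace every $\phi(u)$ by $z_u\cdot u$ with $z_u:=\phi(u)u^{-1}\in Z(G)$ (using that $\phi$ is central), collect all central factors at the front, and check that the remaining non-central words on the two sides coincide — the latter being essentially the core-quandle identity $(x*y)*(z*y)=(x*z)*(y*z)$ for $x*y=yx^{-1}y$ on $G$, which holds since $\Core(G)$ is a quandle (Example~\ref{coreex}).

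For part (2), write $x\dow y=\phi(xy^{-1})\psi(y)$ and $x\up y=\psi(x)$, with $\phi\psi=\psi\phi$. Axiom (1): $x\dow x=\phi(1)\psi(x)=\psi(x)=x\up x$. Axiom (2): $\beta_y=\psi$ is a bijection; $\alpha_y:x\mapsto\phi(xy^{-1})\psi(y)=\phi(x)\phi(y)^{-1}\psi(y)$ is $\phi$ followed by a right translation, hence bijective; and $S(x,y)=(\psi(y),\,\phi(x)\phi(y)^{-1}\psi(y))$ is bijective since $\psi(y)$ recovers $y$ and then the second coordinate recovers $\phi(x)$, hence $x$. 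Axiom (3): again $x\up y=\psi(x)$ is independent of $y$, so (3c) is $\psi^2(x)=\psi^2(x)$ and (3b) reduces, after expanding, to an identity in $\phi,\psi$ applied to group elements; commutativity of $\phi$ and $\psi$ lets us pull the outer $\psi$ through and match terms. The substantive check is (3a): expanding $\dow$ twice on both sides and using $\phi(ab)=\phi(a)\phi(b)$, $\psi(ab)=\psi(a)\psi(b)$ together with $\phi\psi=\psi\phi$, both sides should reduce to the same word in $\phi^2(x),\phi^2(y),\phi^2(z)$ and $\psi(\cdot)$-images; I would organize the computation by first treating the case $\psi=\id$ (where the statement specializes to a twisted Alexander-type identity, essentially that $\Alex(G,\phi)$ is a quandle) and then carrying the $\psi$-factors along formally using commutativity. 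Here commutativity of the pair $\phi,\psi$ is exactly what makes the bookkeeping close up, so I expect that to be the point requiring care rather than ingenuity. In both parts, no deep input is needed beyond the group axioms and the hypotheses; the proof is a disciplined expansion, and the only real choice is to set up notation (the central elements $z_u$ in part (1), and the "$\psi$ commutes past everything" principle in part (2)) so that the verification of (3a) is transparent.
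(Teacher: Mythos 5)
Your overall strategy---verify the biquandle axioms by direct expansion, using centrality in part (1) and commutativity in part (2)---is exactly the paper's approach (its proof is literally ``a direct check''), and the proposition is indeed provable this way; with the expansions done honestly your outline closes. But two of your intermediate claims in part (1) are wrong as stated and must be repaired. First, the residual identity you invoke after stripping central factors, $(x*y)*(z*y)=(x*z)*(y*z)$ in $\Core(G)$, is \emph{not} an identity of the core quandle: for $x*y=yx^{-1}y$ one computes $(x*y)*(z*y)=yz^{-1}xz^{-1}y$ while $(x*z)*(y*z)=zy^{-1}xy^{-1}z$, and these differ already in a free group. What actually remains from axiom (3a) after your substitution $\phi(u)=z_u u$ is $(x*y)*(z*y)=(x*z)*y$, because the factor $y\overline{*}z=\phi(y)$ leaves the residual $y$, not $y*z$; this corrected identity is precisely the rack axiom (r2) read from right to left, so it does hold in $\Core(G)$. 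With that fix your $z_u$-trick works, but you should also record that the central prefixes on the two sides agree (they do: $u\mapsto z_u$ is a homomorphism into $Z(G)$ and both sides of (3a) involve the same $\phi$-applications, namely $\phi^2(y)$, $\phi(z)^{-1}$, $\phi(y)$); comparing only the non-central residues is not by itself enough. Second, your displayed reduction of (3b) is not the correct expansion, and your claim that centrality enters there is off: since $x\overline{*}z=\phi(x)$ and $y\overline{*}z=\phi(y)$, both sides of (3b) equal $\phi^2(y)\phi(x)^{-1}\phi(y)$ for an \emph{arbitrary} automorphism $\phi$. Centrality is needed only in (3a); axioms (1), (2), (3b), (3c) hold without it.

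Part (2) of your outline is essentially fine and needs only the routine expansions: axiom (3a) gives $\phi^2(x)\phi^2(z)^{-1}\psi\phi(z)\psi\phi(y)^{-1}\psi^2(y)$ on the left versus $\phi^2(x)\phi^2(z)^{-1}\phi\psi(z)\phi\psi(y)^{-1}\psi^2(y)$ on the right, and (3b) gives $\psi\phi(x)\psi\phi(y)^{-1}\psi^2(y)$ versus $\phi\psi(x)\phi\psi(y)^{-1}\psi^2(y)$, so $\phi\psi=\psi\phi$ is exactly what makes both close, as you predicted; (1), (2) and (3c) are immediate. So the verdict is: same method as the paper and a true statement, but as written your proof of part (1) leans on a false identity of $\Core(G)$ and mislocates where the centrality hypothesis is used, and both points need the corrections above before the argument is complete.
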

\begin{proof} A direct check.
\end{proof}
Biquandles were introduced in \cite{Fenn} as tools for constructing  invariants for virtual links. Let $B$ be a finite biquandle. A \textit{coloring of a diagram} $D_K$ of a link $K$ by elements of $B$ is a labeling of arcs of $D_K$ by elements of $B$. A coloring is said to be \textit{proper} if in the neighborhood of all crossing the labels of arcs are as on Figure~\ref{biquandlerelations2}. 
\begin{figure}[hbt!]
\noindent\centering{
\includegraphics[height=25mm]{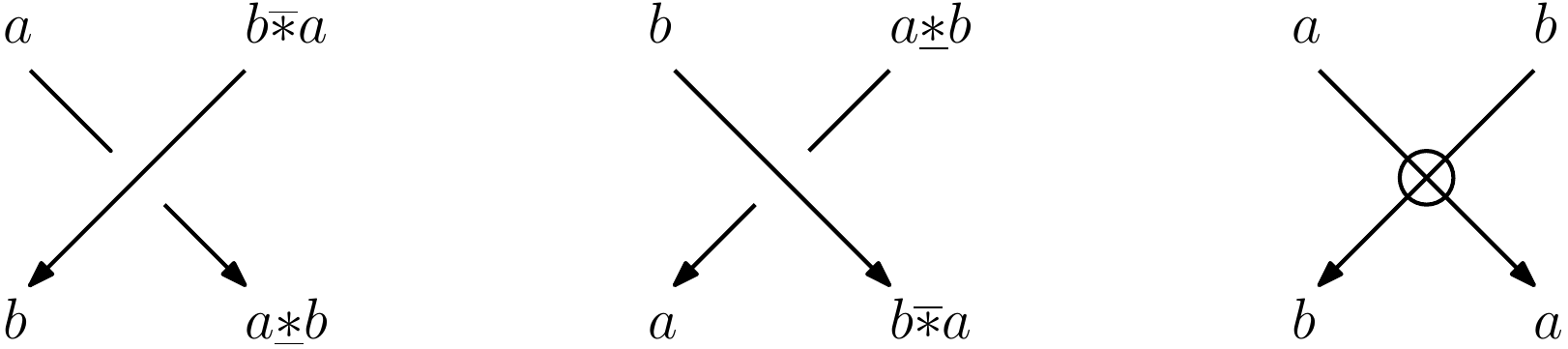}}
\caption{Labels of arcs in $D_K$.}
\label{biquandlerelations2}
\end{figure}

The number of proper colorings of $D_K$ by elements of $B$ is an invariant of $K$ which is called the \textit{biquandle-coloring invariant defined by $B$} and is denoted by $C_B(K)$. If $B=\mathcal{B}(Q)$, where $Q$ is a finite quandle, then $C_B(K)=C_Q(K)$. Note that for biquandle colorings the coloring by only one element of $B$ is not necessarily proper, so, there are no connections between $C_B(K)$ and $|B|$. 

\begin{remark}Note that in paper \cite{Fenn} where biquandles were originally defined there is another definition of biquandles. According to \cite{Fenn}, a biquandle is again an algebraic system with two binary algebraic operations $(x,y)\mapsto x^y$, $(x,y)\mapsto x_y$, but the axioms for these operations differ from the axioms for operations $\underline{*}, \overline{*}$ introduced at the beginning of this section.  It happened since axioms for operations $\underline{*}, \overline{*}$ come from Figure~\ref{biquandlerelations2}, while the axioms for operations $(x,y)\mapsto x^y$, $(x,y)\mapsto x_y$ in \cite{Fenn} come from Figure~\ref{oldbiquandlerelations}.
\begin{figure}[hbt!]
\noindent\centering{
\includegraphics[height=27mm]{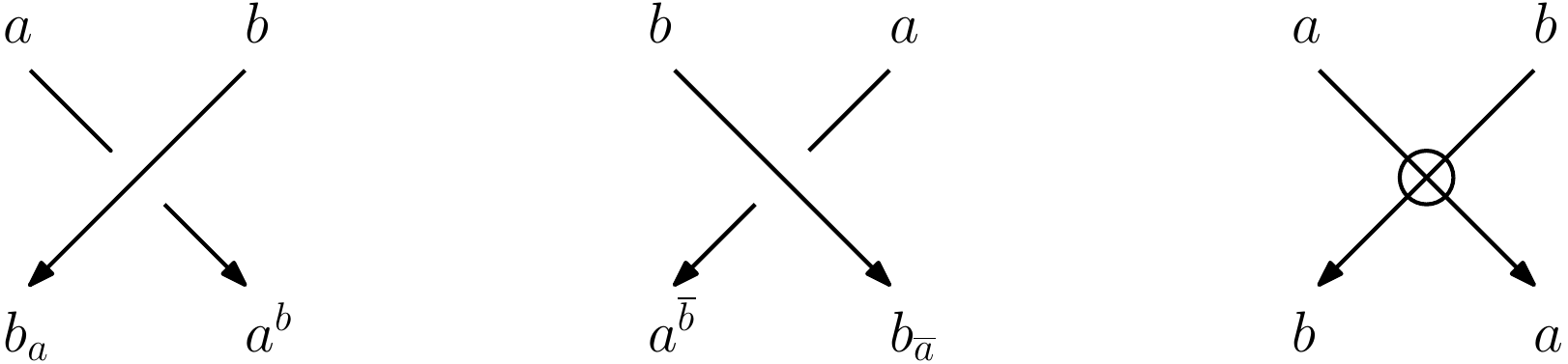}}
\caption{Labels of arcs due to \cite{Fenn}.}
\label{oldbiquandlerelations}
\end{figure}

Comparing Figure~\ref{biquandlerelations2} and Figure~\ref{oldbiquandlerelations} we see that operations $(x,y)\mapsto x^y$, $(x,y)\mapsto x_y$ can be written in terms of operations $\underline{*}, \overline{*}$ in the following way 
\begin{align}
\notag x^y=x\underline{*}(y\overline{*}^{-1}x),&& y_x=y\overline{*}^{-1}x.
\end{align}
Thus, biquandles defined in \cite{Fenn} and biquandles defined at the beginning of this section are in one-to-one correspondance. In this paper we use the definition of a biquandle formulated at the beginning of this section since we use some ideas from the paper \cite{Horvat} which is formulated in these terms.
\end{remark}

\section{Verbal quandles and biquandles}\label{sec-verbal}
Let $w = w(x,y)$ be an element of the free group $F(x,y)$ with two generators. If $G$ is a group, then $w$ defines a map $w:G\times G\to G$. For $g,h\in G$ denote by $g *_w h = w(g, h)$. We can think about~$*_w$ as about new operation on the group $G$, so, $(G,*_w)$ is an algebraic system.  
The case when~$*_w$ defines a group is considered, for example, in \cite{BarSim, Coo, Sol} (see also \cite[Problem~6.47]{Kourovka}). If the algebraic system $(G,*_w)$ is a rack (respectively, quandle), then we call this rack (respectively, quandle) a {\it verbal rack} (respectively, \textit{verbal quandle}) defined by the word $w$. Similarly, if we have two elements $u=u(x,y)$, $v=v(x,y)$ from the free group $F(x,y)$, then for an arbitrary group $G$ these words define two operations $g\overline{*}h=u(g,h)$, $g\underline{*} h= v(g,h)$ for $g,h\in G$. If the resulting algebraic system is a birack (respectively, biquandle), then we call this birack (respectively, biquandle) a \textit{verbal birack} (respectively, \textit{verbal biquandle}) defined by the words $u,v$.

In this section we investigate verbal quandles and biquandles. 

\subsection{Racks and quandles}
As we noticed in Section~\ref{quandlesprem}, if $w(x,y) = y^{-n} x y^n$ for an integer $n$, then $(G, *_w)$ is the $n$-th conjugation quandle ${\rm Conj}_n(G)$ (Example~\ref{conjugationex}), and if $w(x,y) = y x^{-1} y$, then $(G, *_w)$ is the core quandle ${\rm Core}(G)$ (Example~\ref{coreex}). In this section we find all possible words $w\in F(x,y)$ such that the algebraic system $(G,*_w)$ is a rack (respectively, quandle) for all groups~$G$. 

\begin{proposition}\label{p7.2}
Let $w=w(x,y)\in F(x,y)$ be such that $Q = (G, *_{w})$ is a rack for every group $G$. Then $Q$ is a quandle, and $w(x,y) = y x^{-1} y$ or $w(x,y) = y^{-n} x y^n$ for some $n \in \mathbb{Z}$.
\end{proposition}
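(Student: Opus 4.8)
The plan is to translate both rack axioms into statements about the word $w$ inside free groups, read off the possible exponent sums of $x$ and $y$, deduce idempotency, and then pin down $w$ by a reduced-word analysis of the self-distributivity identity. Since $(G,*_w)$ is required to be a rack for \emph{every} group $G$, axiom (r2) holds in particular when $G$ is free on $x,y,z$ with the generators plugged into their own slots; as an equality of two group words that holds under all substitutions into all groups is an equality of reduced words in the free group, (r2) is equivalent to the single identity
\begin{equation}
w(w(x,y),z)=w(w(x,z),w(y,z)) \qquad\text{in } F(x,y,z). \tag{$\star$}
\end{equation}
Let $p,q$ be the exponent sums of $x$ and $y$ in $w$, so that $w(a,b)=a^{p}b^{q}$ in any abelian group. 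Abelianizing (r1), the map $S_b\colon a\mapsto w(a,b)=a^{p}b^{q}$ must be a bijection for every abelian group; taking $G=\ZZ_n$ gives $\gcd(p,n)=1$ for all $n$, hence $p=\pm1$.

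Next I would use (r1) to promote a consequence of $(\star)$ to an identity. Setting $y=z$ in $(\star)$ yields $w(w(x,z),z)=w(w(x,z),w(z,z))$ for all $x,z$; since $S_z\colon x\mapsto w(x,z)$ is \emph{surjective} by (r1), the argument $w(x,z)$ runs over all of $G$, so $w(a,z)=w(a,w(z,z))$ for all $a,z$ in every group. As $w(z,z)=z^{\,p+q}$, this is the reduced-word identity $w(a,z)=w(a,z^{\,p+q})$ in $F(a,z)$. The $a$-syllables are untouched and block cross-cancellation, so comparing $z$-syllables forces every $z$-syllable exponent $c$ to satisfy $c=c(p+q)$. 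Hence either $p+q=1$, in which case $x*_w x=w(x,x)=x^{\,p+q}=x$ and $Q$ is a quandle, or $w$ is independent of $y$, in which case $w=x^{\pm1}$; among the latter only $w=x$ is idempotent (the trivial quandle $\Conj_0(G)$, the case $n=0$), while $w=x^{-1}$ is the sole non-idempotent rack and is the one degenerate word the statement quietly excludes. Combining $p+q=1$ with $p=\pm1$ leaves exactly $(p,q)=(1,0)$ and $(p,q)=(-1,2)$.

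It remains to determine $w$ exactly in each case from $(\star)$, and this reduced-word combinatorics is the main obstacle. The key sub-claim is that $x$ occurs \emph{only once} in $w$: writing $w$ as an alternating product of $x$- and $y$-syllables and tracking those syllables of $(\star)$ that carry the generator $z$ (on the left they come only from the outer copy of $w$, on the right they come from both $w(x,z)$ and $w(y,z)$), one shows that two or more $x$-occurrences are incompatible with the identity; passing to a free nilpotent or metabelian quotient and reading off Fox derivatives is an alternative way to bound the number of occurrences. Once $x$ occurs once, the constraints $(p,q)=(1,0)$ and vanishing $y$-exponent force $w=y^{a}xy^{-a}=y^{-n}xy^{n}$ with $n=-a$, while $(p,q)=(-1,2)$ gives $w=y^{a}x^{-1}y^{b}$ with $a+b=2$; a final application of $(\star)$ — for instance the specialization $z=x$, which reads $(x*_w y)*_w x=x*_w(y*_w x)$ — rules out $a\neq b$ and yields $w=yx^{-1}y$.

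I expect the single-occurrence step to be the delicate part, precisely because it is the point where the abelian invariants lose all information: abelianizing $(\star)$, or even abelianizing its Fox-derivative form, becomes a trivial commutative identity, so the count of $x$'s can only be controlled by genuinely non-commutative reduced-word bookkeeping (or by a nilpotent/metabelian refinement). Everything else — the exponent constraints $p=\pm1$ and $p+q=1$, the resulting idempotency, and the case split — follows cleanly from abelianization together with the surjectivity of $S_z$.
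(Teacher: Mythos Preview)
Your approach diverges from the paper's at the crucial step, and the divergence leaves a real gap. You correctly extract the exponent-sum constraints $p=\pm1$ and $p+q=1$ (the substitution $y=z$ in $(\star)$ together with surjectivity of $S_z$ is a nice touch, and your remark about $w=x^{-1}$ is also correct---it does give a non-idempotent verbal rack, and the paper's own Case~1 tacitly assumes $\alpha\neq0$ when concluding $\varepsilon=1$). But you then try to establish that $x$ occurs only once in $w$ by analysing $(\star)$ itself, and you openly flag this as ``the delicate part'' without carrying it out. Neither proposed method---syllable-tracking in $(\star)$ or Fox derivatives in a nilpotent/metabelian quotient---is worked through, and both are considerably harder than what is actually needed. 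Your final step, pinning down $w=yx^{-1}y$ via the single specialisation $z=x$, is likewise only asserted; the paper needs a genuine case split on $\alpha$ with syllable-length comparisons to eliminate $\alpha\notin\{1\}$.

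The paper sidesteps the whole difficulty by extracting the single occurrence of $x$ from axiom (r1) \emph{before} ever touching (r2). The point is that $S_a\colon c\mapsto w(c,a)$ must be a bijection of \emph{every} group, in particular surjective on the free group $F(a,b)$; hitting the generator $b$ forces $w$ to have the shape $y^{\alpha}x^{\varepsilon}y^{\beta}$ with $\varepsilon=\pm1$, since a word with two or more $x$-syllables (or one $x$-syllable of exponent $|n|\ge2$) cannot represent the length-one element $b$ in the free product $\langle a\rangle *\langle b\rangle$. With that shape in hand, (r2) becomes a single identity in three integer parameters $\alpha,\beta,\varepsilon$; substituting $x=y=1$ gives $\alpha+\beta\in\{0,2\}$, and a short reduced-word case analysis in $F(x,y,z)$ finishes. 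The moral: use (r1) more aggressively, not just through its abelian shadow---it already hands you the single-occurrence fact that your outline treats as the main obstacle.
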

\begin{proof}Let $w = x^{\alpha_1} y^{\beta_1} \ldots x^{\alpha_k} y^{\beta_k}$ for $\alpha_i, \beta_i \in \mathbb{Z}$ be a reduced word, where all $\alpha_i, \beta_i$ are non-zero with possible exceptions for $\alpha_1$ and $\beta_k$. Since $(G,*_w)$ is a rack, from the axiom (r1) of a rack follows that for every $a, b \in G$ there exists an element $c \in G$ such that 
$$c^{\alpha_1} a^{\beta_1} \ldots c^{\alpha_k} a^{\beta_k}=c *_w a = b,$$ 
but it is possible if and only if $w =  y^{\alpha}  x^{\varepsilon} y^{\beta}$ for $\alpha, \beta \in \mathbb{Z}$, $\varepsilon \in \{\pm 1 \}$. From the axiom (r2) of a rack for all $x,y,z\in G$, we obtain $
(x *_w y) *_w z = (x *_w z) *_w(y *_w z)$, which can be rewritten as
\begin{equation}\label{a3}
z^{\alpha} (y^{\alpha} x^{\varepsilon} y^{\beta})^{\varepsilon} z^{\beta} = (z^{\alpha} y^{\varepsilon} z^{\beta})^{\alpha} (z^{\alpha} x^{\varepsilon} z^{\beta})^{\varepsilon} (z^{\alpha} y^{\varepsilon} z^{\beta})^{\beta}.
\end{equation}
If in equality (\ref{a3}) we put $x=y=1$, then we obtain
$$
z^{(\alpha + \beta)} = z^{(\alpha + \beta) (\alpha + \beta + \varepsilon)},
$$
and hence $\alpha + \beta = 0$ or $\alpha + \beta \not = 0$ and $1 = \alpha + \beta + \varepsilon$, i.~e. $\varepsilon = -1$.

\textit{Case 1}: $\alpha + \beta = 0$. In this case equality (\ref{a3}) can be rewritten in the form
$$
z^{\alpha} y^{\alpha} x y^{-\alpha} z^{-\alpha} = z^{\alpha} y^{\alpha \varepsilon} x y^{-\alpha \varepsilon} z^{-\alpha}.
$$
Since this equality holds, in particular, in the free group $F(x, y, z)$ on $3$ generators, we have $\varepsilon = 1$ and $w = y^{\alpha} x y^{-\alpha}$.

{\it Case} 2: $\alpha + \beta = 2$. In this case $\varepsilon = -1$, $\beta = 2 - \alpha$, and equality (\ref{a3}) can be rewritten in the form
\begin{equation}\label{case2verbal}
z^{\alpha} y^{\alpha-2} x y^{-\alpha} z^{2-\alpha} = (z^{\alpha} y^{-1} z^{2-\alpha})^{\alpha}
z^{\alpha-2} x z^{-\alpha} (z^{\alpha} y^{-1} z^{2-\alpha})^{2-\alpha}.
\end{equation}
Since this equality must hold in every group for all $x,y,z$, in particular, it holds in the free group $F(x, y, z)$ on $3$ generators. Depending on $\alpha$, we have the following subcases.

{\it Case} 2.1: $\alpha<0$. Equality (\ref{case2verbal}) in this case implies that
$$
z^{\alpha} y^{\alpha-2} x y^{-\alpha} z^{2-\alpha} = \underbrace{(z^{\alpha-2} y z^{-\alpha}) \ldots (z^{\alpha-2} y z^{-\alpha})}_{-\alpha~ \mbox{\tiny times}} z^{\alpha-2}  x z^{-\alpha}
\underbrace{(z^{\alpha} y^{-1} z^{2-\alpha}) \ldots (z^{\alpha} y^{-1} z^{2-\alpha})}_{2-\alpha~ \mbox{\tiny times}}.  
$$
The left side of this equality is reduced, while the word on the right side of this equality after cancellations starts with $z^{\alpha-2} y z^{-2}$. This equality cannot hold in $F(x,y,z)$, so, this case is impossible.

{\it Case} 2.2: $\alpha = 0$. Equality (\ref{case2verbal}) in this case implies that $y^{-2} x = z^{-2} x y^{-1} z^{2} y^{-1}$, but it is not true in $F(x,y,z)$. So, this case is impossible.

{\it Case} 2.3: $\alpha = 1$. In this case $w = y x^{-1} y$ and equality (\ref{case2verbal}) obviously holds.

{\it Case} 2.4: $\alpha = 2$. Equality (\ref{case2verbal}) in this case implies that $x y^{-2} = y^{-1} z^{2} y^{-1} x z^{-2}$, but it is not true in $F(x,y,z)$, so, this case is impossible.

{\it Case} 2.5: $\alpha >2$. In this case $2-\alpha < 0$, and equality (\ref{case2verbal}) implies that
$$
z^{\alpha} y^{\alpha-2} x y^{-\alpha} z^{2-\alpha} = \underbrace{(z^{\alpha} y^{-1} z^{2-\alpha}) \ldots (z^{\alpha} y^{-1} z^{2-\alpha})}_{\alpha~ \mbox{\tiny times}}
z^{\alpha-2}  x z^{-\alpha}
\underbrace{(z^{\alpha-2} y z^{-\alpha}) \ldots (z^{\alpha-2} y z^{-\alpha})}_{\alpha-2~ \mbox{\tiny times}}.
$$
The left side of this equality is reduced, while the word on the right side of this equality after cancellations starts by $z^{-2} y z^{-\alpha}$. This equality cannot hold in $F(x,y,z)$, so, this case is impossible.

Thus, we have proven that a word $w$ defines a rack $(G, *_w)$ on arbitrary group $G$ if and only if $w(x,y) = y^{-n} x y^n$ for some $n \in \mathbb{Z}$ or $w(x,y) = y x^{-1} y$. In both cases the rack $(G, *_w)$ is a quandle.
\end{proof}

\subsection{Biracks and biquandles}

If $Q$ is a verbal quandle, then it is clear that $\mathcal{B}(Q)$ is a verbal biquandle. The Wada biquandle (Example~\ref{wadaex}) is a verbal biquandle which cannot be written as $\mathcal{B}(Q)$ for any quandle $Q$. In this section we find all possible words $u,v\in F(x,y)$ such that the algebraic system $(G,\underline{*},\overline{*})$ with the operations given by $g\overline{*}h=u(g,h)$, $g\underline{*} h= v(g,h)$ for $g,h\in G$ is a birack (respectively, biquandle) for all groups $G$. 

\begin{theorem}\label{ver-biquandle-thm}
Let $u=u(x,y)$, $v=v(x,y)$ be elements from $F(x,y)$. Then the algebraic system $(G,\overline{*},\underline{*})$, where $G$ is a group and $g\overline{*}h=u(g,h)$, $g\underline{*} h= v(g,h)$ for $g,h\in G$ is a birack for every group $G$ if and only if the elements $u,v$ have one of the following forms:
\begin{align}
&\notag(1)~~u(x,y)=x, &&v(x,y) = y^{\gamma} x y^{-\gamma},&&\text{where}~\gamma \in \mathbb{Z};&&~&&~&&~\\
&\notag(2)~~u(x,y) = y^{\alpha} x y^{-\alpha}, &&v(x,y) =  x, &&\text{where}~\alpha \in \mathbb{Z};&&~&&~&&~\\
&\notag(3)~~u(x,y) = y^{-1} x y^{-1}, &&v(x,y) =  x^{-1};&&~&&~&&~&&~\\
&\notag(4)~~u(x,y) = y x^{-1}  y, &&v(x,y) =  x;&&~&&~&&~&&~\\
&\notag(5)~~u(x,y) = x y^{-2}, &&v(x,y) =  y x^{-1} y^{-1};&&~&&~&&~&&~\\
&\notag(6)~~u(x,y) = y^{-2}x, &&v(x,y) =  y^{-1} x^{-1} y;&&~&&~&&~&&~\\
&\notag(7)~~u(x,y) = x, &&v(x,y) =  y x^{-1} y;&&~&&~&&~&&~\\
&\notag(8)~~u(x,y) = x^{-1}, &&v(x,y) =  y^{-1} x^{-1} y^{-1}.&&~&&~&&~&&~
\end{align}
In particular, every such birack is a biquandle.
\end{theorem}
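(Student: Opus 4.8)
The plan is to follow closely the template of the proof of Proposition~\ref{p7.2}, extracting constraints from all three defining identities of a birack together with the bijectivity requirements of axiom~(2), and always using the principle that a word identity valid in every group must hold identically in the free group. Concretely, axiom~(2) requires that for every group $G$ and every $y\in G$ the maps $x\mapsto x\underline{*}y=v(x,y)$ and $x\mapsto x\overline{*}y=u(x,y)$ be bijections of $G$. This is exactly the situation analysed in the opening paragraph of the proof of Proposition~\ref{p7.2} (applied to the word $v$, resp.\ $u$, with the first variable in the role of the argument and the second in the role of the parameter), so the same argument forces
$$u(x,y)=y^{p}x^{\varepsilon}y^{q},\qquad v(x,y)=y^{r}x^{\delta}y^{s}$$
for some $p,q,r,s\in\mathbb{Z}$ and $\varepsilon,\delta\in\{\pm1\}$. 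From here on the problem is finite: we must decide for which $6$-tuples $(p,q,r,s,\varepsilon,\delta)$ the remaining birack axioms hold for all $G$.

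Next I would substitute these expressions into the three identities of axiom~(3); each becomes an identity between two words in $F(x,y,z)$ that must hold identically. As in Proposition~\ref{p7.2} (where $x=y=1$ was substituted), specialising one or two of $x,y,z$ to $1$ collapses each identity to a relation among powers of a single variable, producing linear Diophantine conditions such as $p+q=0$, or $p+q\neq 0$ together with $p+q+\varepsilon=1$, and the analogous statements for $r,s,\delta$; these split the analysis into a handful of branches. Inside each branch one compares the fully reduced normal forms of the two sides in $F(x,y,z)$ — inspecting first and last syllables and the total exponents of $x,y,z$, exactly the cancellation bookkeeping used in Cases~2.1--2.5 of Proposition~\ref{p7.2} — to discard all but finitely many tuples. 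It is efficient to isolate the degenerate branches, where $\overline{*}$ is trivial ($u(x,y)=x$) or $\underline{*}$ is trivial ($v(x,y)=x$): there the identities of axiom~(3) collapse essentially to the single rack identity~(r2), so Proposition~\ref{p7.2} applies verbatim and yields families (1), (2), (4), (7). The mixed branches, where both operations are non-trivial, carry the bulk of the case work and produce families (3), (5), (6), (8); along the way one also checks bijectivity of the sideways map $S(x,y)=(u(y,x),v(x,y))$, the third clause of axiom~(2), which discards the remaining spurious tuples.

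For the converse, for each of the eight listed families one verifies directly in an arbitrary group $G$ — a routine if tedious computation — that axioms~(2) and~(3) hold, so the list is exactly complete. Finally one checks the quandle-type identity $x\underline{*}x=x\overline{*}x$, i.e.\ $v(x,x)=u(x,x)$, for each family: for instance in family~(1) both sides equal $x$, in family~(5) both equal $x^{-1}$, and so on. Since a birack satisfying this extra identity is by definition a biquandle, the last sentence of the theorem follows.

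The main obstacle is the middle step: three identities, each in three group variables, whose reduced normal forms depend on the signs and magnitudes of $p,q,r,s$, so the case tree is sizeable and one must be systematic about which specialisations and which syllable comparisons eliminate which tuples. The reduction of the degenerate branches to Proposition~\ref{p7.2} and the repeated appeal to ``an identity valid in all groups is valid in the free group'' are what keep this within reach.
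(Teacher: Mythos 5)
Your outline is correct and follows essentially the same route as the paper's proof: bijectivity of the maps $\alpha_y,\beta_y$ forces $u,v$ into the sandwich forms $y^{a}x^{\pm1}y^{b}$, commutative specializations of the three birack identities yield the exponent-sum (Diophantine) constraints that split the problem into finitely many branches, and each branch is settled by comparing reduced words and syllable lengths in $F(x,y,z)$, with the eight families verified directly and the biquandle condition $u(x,x)=v(x,x)$ checked at the end. Your only deviations are cosmetic: the paper passes to the free abelian group rather than setting variables equal to $1$, and it re-derives the branches in which one operation is trivial inside its own case analysis instead of quoting Proposition~\ref{p7.2} as you do.
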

\begin{proof}The fact that the operations $g\overline{*}h=u(g,h)$, $g\underline{*} h= v(g,h)$ given in the formulation of the theorem define biquandles can be proved using a direct check. Let us prove that these operations are the only possible ones.

Let the algebraic system $(G,\overline{*},\underline{*})$, where $G$ is a group and $g\overline{*}h=u(g,h)$, $g\underline{*} h= v(g,h)$ for $g,h\in G$ be a birack for all groups $G$. Since the maps $\alpha_y:x\mapsto x \underline{*} y=v(x,y)$, $\beta_y:x\mapsto x \overline{*} y=u(x,y)$ are invertible, similar to Proposition~\ref{p7.2}, we conclude that the words $u(x,y)$, $v(x,y)$ have the following forms
\begin{align}
\label{simpleformbiqverb} u(x,y) = x^{\alpha} y^{\varepsilon} x^{\beta},&&v(x,y) = y^{\gamma} x^{\mu} y^{\delta}
\end{align}
for some $\alpha, \beta, \gamma, \delta \in \mathbb{Z}$, $\varepsilon, \mu \in \{\pm 1\}$. Rewriting the identities
\begin{enumerate}
\item $(x\underline{*}y)\underline{*}(z\underline{*}y)=(x\underline{*}z)\underline{*}(y\overline{*}z)$,
\item $(x\underline{*}y)\overline{*}(z\underline{*}y)=(x\overline{*}z)\underline{*}(y\overline{*}z)$,
\item $(x\overline{*}y)\overline{*}(z\overline{*}y)=(x\overline{*}z)\overline{*}(y\underline{*}z)$
\end{enumerate}
of biracks using equalities (\ref{simpleformbiqverb}) and the fact that $x \overline{*} y = y^{\alpha} x^{\varepsilon} y^{\beta}$, $x \underline{*} y = y^{\gamma} x^{\mu} y^{\delta}$, we have the equalities
\begin{align}
\label{id1}(y^{\gamma} z^{\mu} y^{\delta})^{\gamma} (y^{\gamma} x^{\mu} y^{\delta})^{\mu}  (y^{\gamma} z^{\mu} y^{\delta})^{\delta}
&=(z^{\alpha} y^{\varepsilon} z^{\beta})^{\gamma} (z^{\gamma} x^{\mu} z^{\delta})^{\mu} (z^{\alpha} y^{\varepsilon} z^{\beta})^{\delta},\\
\label{id2}(y^{\gamma} z^{\mu} y^{\delta})^{\alpha} (y^{\gamma} x^{\mu} y^{\delta})^{\varepsilon}  (y^{\gamma} z^{\mu} y^{\delta})^{\beta}
&=(z^{\alpha} y^{\varepsilon} z^{\beta})^{\gamma} (z^{\alpha} x^{\varepsilon} z^{\beta})^{\mu} (z^{\alpha} y^{\varepsilon} z^{\beta})^{\delta},\\
\label{id3}(y^{\alpha} z^{\varepsilon} y^{\beta})^{\alpha} (y^{\alpha} x^{\varepsilon} y^{\beta})^{\varepsilon}  (y^{\alpha} z^{\varepsilon} y^{\beta})^{\beta}
&=(z^{\gamma} y^{\mu} z^{\delta})^{\alpha} (z^{\alpha} x^{\varepsilon} z^{\beta})^{\varepsilon} (z^{\gamma} y^{\mu} z^{\delta})^{\beta}
\end{align}
which hold in every group $G$ for all elements $x,y,z\in G$.

Let $G$ be a free abelian group with the free generators $x,y,z$. Then equalities (\ref{id1}), (\ref{id2}), (\ref{id3}) can be rewritten in the form
\begin{align}
\notag x y^{(\gamma+\delta) (\gamma+\delta+\mu)} z^{(\gamma+\delta)\mu} &= x y^{(\gamma+\delta)\varepsilon} z^{(\alpha+\beta) (\gamma+\delta+\mu)},\\
\notag x^{\varepsilon \mu} y^{(\gamma+\delta) (\alpha+\beta+\varepsilon)} z^{(\alpha+\beta)\mu} &= x^{\varepsilon \mu} y^{(\gamma+\delta)\varepsilon} z^{(\alpha+\beta) (\gamma+\delta+\mu)},\\
\notag x y^{(\alpha+\beta) (\alpha+\beta+\varepsilon)} z^{(\alpha+\beta)\varepsilon} &= x y^{(\alpha+\beta)\mu} z^{(\alpha+\beta)(\gamma+\delta+\varepsilon)},
\end{align}
which is equivalent to the system of equations
\begin{equation}\label{defsystem}
\begin{cases}(\alpha+\beta)(\gamma+\delta)=0,\\
(\gamma+\delta) (\gamma+\delta+\mu - \varepsilon)=0,\\
(\alpha+\beta) (\alpha+\beta+\varepsilon- \mu )=0.\\
\end{cases}
\end{equation}
This system is solvable in one of the following three cases:
\begin{enumerate}
\item $\gamma+\delta=0$, $\alpha+\beta=0$,
\item $\gamma+\delta=0$, $\alpha+\beta\neq0$,
\item $\gamma+\delta\neq 0$, $\alpha+\beta=0$.
\end{enumerate}
We will consider these three cases separately.

\textit{Case 1}: $\gamma+\delta=0$, $\alpha+\beta=0$. In this case, we have 
\begin{align}
x \overline{*} y = y^{\alpha} x^{\varepsilon} y^{-\alpha},&&x \underline{*} y = y^{\gamma} x^{\mu} y^{-\gamma}.
\end{align}
Putting these expressions to equality (\ref{id1}), we have the equality
\begin{equation}\label{eq10}
y^{\gamma} z^{\mu \gamma} x z^{-\mu \gamma} y^{-\gamma} = z^{\alpha} y^{\varepsilon \gamma} z^{\gamma-\alpha} x z^{-\gamma+\alpha} y^{-\varepsilon \gamma} z^{-\alpha}.
\end{equation}
This equality holds in the free group $F(x,y,z)$ with the free generators $x,y,z$ if and only if $\gamma=0$ or $\alpha=0$. 

\textit{Case 1.1}: $\alpha=0$. In this case equality (\ref{eq10}) can be rewritten in the form
$$
y^{\gamma} z^{\mu \gamma} x z^{-\mu \gamma} y^{-\gamma} =  y^{\varepsilon \gamma} z^{\gamma} x z^{-\gamma} y^{-\varepsilon \gamma}.
$$
This equality holds in the free group $F(x,y,z)$ if and only if  $\varepsilon = \mu =1$. Thus, the elements $x\overline{*}y$, $x\underline{*}y$ in this situation have the form 
\begin{align} \label{eq11}
x \overline{*} y =  x,&&x \underline{*} y = y^{\gamma} x y^{-\gamma}.
\end{align}

\textit{Case 1.2}: $\gamma = 0$. In this case equality (\ref{eq10}) can be rewritten as $x = x$, and  the elements $x\overline{*}y$, $x\underline{*}y$ in this situation have the form
\begin{align}
\notag x \overline{*} y = y^{\alpha} x^{\varepsilon} y^{-\alpha},&&x \underline{*} y = x^{\mu}.
\end{align}
Putting these elements in (\ref{id3}), we get the equality
$$
y^{\alpha} z^{\alpha \varepsilon} x z^{-\alpha \varepsilon} y^{-\alpha} =  y^{\alpha \mu} z^{\alpha} x z^{-\alpha} y^{-\alpha \mu},
$$
which holds in the free group $F(x,y,z)$ if an only if $\varepsilon = \mu =1$. 
In this situation, the elements $x\overline{*}y$, $x\underline{*}y$  have the form
\begin{align} \label{eq12}
x \overline{*} y = y^{\alpha} x y^{-\alpha},&&x \underline{*} y = x.
\end{align}
Thus, we proved that in the case when $\gamma+\delta=0$, $\alpha+\beta=0$ the elements $x\overline{*}y$, $x\underline{*}y$ are given either by (\ref{eq11}) or by (\ref{eq12}).

\textit{Case 2}: $\gamma+\delta=0$, $\alpha+\beta\neq0$. In this case from system (\ref{defsystem}) it follows that $\mu = -\varepsilon$ and $\alpha+\beta + 2 \varepsilon= 0$. The elements $x\overline{*}y$, $x\underline{*}y$ in this situation are given by the formulas
\begin{align}
x \overline{*} y = y^{\alpha} x^{\varepsilon} y^{-\alpha-2\varepsilon},&&x \underline{*} y = y^{\gamma} x^{-\varepsilon} y^{-\gamma}.
\end{align}
 Substituting these operations to (\ref{id1}), we obtain
 \begin{equation}\label{eq1}
y^{\gamma} z^{-\varepsilon \gamma} x z^{\varepsilon \gamma} y^{-\gamma} = \left( z^{\alpha} y^{\varepsilon} z^{-\alpha-2\varepsilon} \right)^{\gamma}  z^{\gamma} x  z^{-\gamma} \left( z^{\alpha} y^{\varepsilon} z^{-\alpha-2\varepsilon} \right)^{-\gamma}.
\end{equation}
Depending on $\gamma$, we have the following cases.

{\it Case 2.1}: $\gamma > 0$. In this case equality (\ref{eq1}) can be rewritten as 
$$
y^{\gamma} z^{-\varepsilon \gamma}  x z^{\varepsilon \gamma} z^{-\gamma} = \underbrace{(z^{\alpha} y^{\varepsilon} z^{-\alpha-2 \varepsilon}) \ldots (z^{\alpha} y^{\varepsilon} z^{-\alpha-2 \varepsilon})}_{\gamma~ \mbox{\tiny times}} z^{\gamma}  x z^{-\gamma}
\underbrace{(z^{\alpha+2 \varepsilon} y^{-\varepsilon} z^{-\alpha}) \ldots (z^{\alpha+2 \varepsilon} y^{-\varepsilon} z^{-\alpha})}_{\gamma~ \mbox{\tiny times}}.
$$
Looking at this equality in the free group $F(x,y,z)$, we conclude that $\alpha=0$. Comparing  syllable lengths of the left and the right sides of the preceding equality, we see that $\gamma=1$. Thus, we can rewrite this equality in the form
$$
y z^{-\varepsilon}  x z^{\varepsilon} y^{-1} = y^{\varepsilon} z^{-2\varepsilon+1}  x z^{2\varepsilon-1} y^{-\varepsilon},
$$
which is true in $F(x,y,z)$ if and only if $\varepsilon = 1$. Hence, we get operations
\begin{align}
\label{case21wad}x \overline{*} y = x y^{-2},&&x \underline{*} y = y x^{-1} y^{-1}.
\end{align}

{\it Case 2.2}: $\gamma = 0$. In this case equality (\ref{eq1}) obviously holds, and we have the operations
\begin{align}
\label{case22wad}
x \overline{*} y = y^{\alpha} x^{\varepsilon} y^{-\alpha-2\varepsilon},&&x \underline{*} y = x^{-\varepsilon}.
\end{align}
Substituting these operations to equality (\ref{id2}) we get the equality
\begin{equation}\label{subst2}
z^{-\varepsilon \alpha}  x^{-1} z^{\varepsilon (\alpha+2\varepsilon)} = \left( z^{\alpha}  x^{\varepsilon} z^{-\alpha-2\varepsilon} \right)^{-\varepsilon}.
\end{equation}
If $\varepsilon=1$, then equality (\ref{subst2}) can be rewritten as
$$
z^{-\alpha}  x^{-1} z^{\alpha+2} = z^{\alpha+2}  x^{-1} z^{-\alpha}
$$
and it holds if and only if $\alpha = -1$. Hence, we get operations
\begin{align}\label{op1}
x \overline{*} y = y^{-1} x y^{-1},&&x \underline{*} y =  x^{-1}.
\end{align}
If $\varepsilon=-1$, then equality (\ref{subst2}) can be rewritten as the equality
$$
z^{\alpha}  x^{-1} z^{2-\alpha} = z^{\alpha}  x^{-1} z^{-\alpha+2}
$$
which always holds. Hence, we get operations
\begin{align}\notag
x \overline{*} y = y^{\alpha} x^{-1} y^{2-\alpha},&&x \underline{*} y =  x.
\end{align}
Putting these operations to equality (\ref{id3}), we get
$$
\left( y^{\alpha}  z^{-1} y^{2-\alpha} \right)^{\alpha} y^{\alpha-2}  x y^{-\alpha} \left( y^{\alpha}  z^{-1} y^{2-\alpha} \right)^{2-\alpha} =
y^{\alpha}  z^{\alpha-2} x z^{-\alpha} y^{2-\alpha}.
$$
Comparing   syllable lengths of the left and the right sides of this equality, we see that $\alpha=1$. Hence, we get operations
\begin{align}\label{op2.1}
x \overline{*} y =  y x^{-1} y,&&x \underline{*} y =  x.
\end{align}

{\it Case 2.3}: $\gamma < 0$.  In this case equality (\ref{eq1}) can be rewritten in the form
$$
y^{\gamma} z^{-\varepsilon \gamma}  x z^{\varepsilon \gamma} y^{-\gamma} = \underbrace{(z^{\alpha+2\varepsilon} y^{-\varepsilon} z^{-\alpha}) \ldots (z^{\alpha+2\varepsilon} y^{-\varepsilon} z^{-\alpha})}_{-\gamma~ \mbox{\tiny times}} z^{\gamma}  x z^{-\gamma}
\underbrace{(z^{\alpha} y^{\varepsilon} z^{-\alpha-2 \varepsilon}) \ldots (z^{\alpha} y^{\varepsilon} z^{-\alpha-2 \varepsilon})}_{-\gamma~ \mbox{\tiny times}}.
$$
Looking at this equality in the free group $F(x,y,z)$, we conclude that $\alpha+2\varepsilon = 0$. Comparing  syllable lengths of the left and the right sides of the preceding equality, we see that $\gamma=-1$. Thus, we can rewrite this equality in the form
$$
y^{-1} z^{\varepsilon}  x z^{-\varepsilon} y = y^{-\varepsilon}  z^{2\varepsilon-1}  x z^{1-2\varepsilon}
 y^{\varepsilon},
$$
which is solvable in $F(x,y,z)$ if and only if $\varepsilon=1$. Hence, we get operations
\begin{align}\label{case23wad}
x \overline{*} y = y^{-2} x,&&x \underline{*} y = y^{-1} x^{-1} y.
\end{align}
Thus, we proved that in the case when $\gamma+\delta=0$, $\alpha+\beta\neq0$ the elements $x\overline{*}y$, $x\underline{*}y$ are given by one of (\ref{case21wad}), (\ref{op1}), (\ref{op2.1}), (\ref{case23wad}).

\textit{Case 3}: $\gamma+\delta\neq0$, $\alpha+\beta=0$. In this case from system (\ref{defsystem}) follows that $\mu = -\varepsilon$ and $\delta+\gamma - 2 \varepsilon= 0$. The elements $x\overline{*}y$, $x\underline{*}y$ in this situation are given by the formulas
\begin{align}\notag
x \overline{*} y = y^{\alpha} x^{\varepsilon} y^{-\alpha},&&x \underline{*} y = y^{\gamma} x^{-\varepsilon} y^{2\varepsilon-\gamma}.
\end{align}
Putting these operations to (\ref{id1}) we get the equality
\begin{equation}\label{eq3}
\left( y^{\gamma} z^{-\varepsilon} y^{2\varepsilon-\gamma} \right)^{\gamma}
\left( y^{\gamma-2\varepsilon} x^{\varepsilon} y^{-\gamma} \right)^{\varepsilon}
\left( y^{\gamma} z^{-\varepsilon} y^{2\varepsilon-\gamma} \right)^{2\varepsilon-\gamma}
= z^{\alpha} y^{\varepsilon \gamma} z^{-\alpha}
\left( z^{\gamma-2\varepsilon} x^{\varepsilon} z^{-\gamma} \right)^{\varepsilon}
z^{\alpha} y^{2-\varepsilon\gamma} z^{-\alpha}.
\end{equation}
Depending on $\varepsilon$, we have two cases.

\textit{Case 3.1}: $\varepsilon = 1$. In this case equality (\ref{eq3}) has the form
$$
\left( y^{\gamma} z^{-1} y^{2-\gamma} \right)^{\gamma}
 y^{\gamma-2} x y^{-\gamma}
\left( y^{\gamma} z^{-1} y^{2-\gamma} \right)^{2-\gamma}
= z^{\alpha} y^{\gamma} z^{-\alpha}
z^{\gamma-2} x z^{-\gamma} z^{\alpha} y^{2-\gamma} z^{-\alpha}.
$$
Comparing   syllable lengths of the left and the right sides of this equality, we see that $\gamma=1$, and the equality can be rewritten in the form
$$
y z^{-1} x z^{-1} y = z^{\alpha} y z^{-\alpha} z^{-1} x z^{-1} z^{\alpha} y z^{-\alpha}.
$$
This equality holds in $F(x,y,z)$ if and only if $\alpha=0$. Hence, we get operations
\begin{align}\label{case3last}
x \overline{*} y =  x,&&x \underline{*} y = y x^{-1} y.
\end{align}

{\it Case 3.2}: $\varepsilon = -1$. In this case equality (\ref{eq3}) has the form
$$
\left( y^{\gamma} z y^{-2-\gamma} \right)^{\gamma}
 y^{\gamma} x y^{-2-\gamma}
\left( y^{\gamma} z y^{-2-\gamma} \right)^{-2-\gamma}
= z^{\alpha} y^{-\gamma} z^{-\alpha}
z^{\gamma} x z^{-2-\gamma} z^{\alpha} y^{2+\gamma} z^{-\alpha}.
$$
Comparing   syllable lengths of the left and the right sides of this equality, we see that $\gamma=-1$, and the equality can be rewritten as
$$
y z^{-1} x z^{-1} y = z^{\alpha} y z^{-\alpha-1}  x  z^{\alpha-1} y z^{-\alpha}.
$$
This equality holds in $F(x,y,z)$ if and only if $\alpha=0$. Hence, we get operations
\begin{align}\label{case32last}
x \overline{*} y =  x^{-1},&&x \underline{*} y = y^{-1} x y^{-1}.
\end{align}
Thus, we proved that in the case when $\gamma+\delta\neq0$, $\alpha+\beta=0$ the elements $x\overline{*}y$, $x\underline{*}y$ are given either by (\ref{case3last}) or by (\ref{case32last}).

Summarizing Case 1, Case 2 and Case 3 together, we conclude that the elements $x\overline{*}y=u(x,y)$, $x\underline{*}y=v(x,y)$ are defined by one of (\ref{eq11}), (\ref{eq12}), (\ref{case21wad}), (\ref{op1}), (\ref{op2.1}), (\ref{case23wad}), (\ref{case3last}), (\ref{case32last}).
These are exactly the $8$ cases enumerated in the formulation of the theorem.
\end{proof}

\section{Constructions of biquandles from quandles}\label{newconstructionsnew}

Let $B=(X,\underline{*},\overline{*})$ be a biquandle. For elements $x,y\in B$ define the element $x*y$ by the rule
$$x*y=(x\underline{*}y)\overline{*}^{-1}y.$$
The set $X$ with the operation $*$ is a quandle \cite{Ashihara} (see also \cite{Horvat}), which is denoted by $\mathcal{Q}(B)=(X,*)$ and called the associated quandle of $B$. It can be seen that $\mathcal{Q}: \textbf{Bqnd} \to \textbf{Qnd}$ given by $B\mapsto\mathcal{Q}(B)$ is a functor from the category of biquandles to the category of quandles \cite[Lemma 3.1]{Horvat}. If $Q$ is a quandle, then it is clear that $\mathcal{Q}(\mathcal{B}(Q))=Q$. However, $\mathcal{B}(\mathcal{Q}(B))\not\simeq B$ in general. Note that the functors $\mathcal{Q}: \textbf{\text{Bqnd}}\to \textbf{\text{Qnd}}$, $\mathcal{B}:\textbf{\text{Qnd}}\to\textbf{\text{Bqnd}}$ are not adjoint to each other.
\begin{problem}What is the adjoint to the functor $\mathcal{Q}$?
\end{problem}

Let $Q=(X,*)$ be a quandle. According to \cite{Horvat} a \textit{biquandle structure} on  $Q$ is a family of automorphisms $\{\beta_y:Q\to Q~|~y\in X\}\subset {\rm Aut}(Q)$ which satisfies the following two conditions:
\begin{enumerate}
\item $\beta_{\beta_y(x*y)}\beta_y=\beta_{\beta_x(y)}\beta_x$ for all $x,y\in X$,
\item the map $y\mapsto \beta_y(y)$ is a bijection of $X$. 
\end{enumerate}
\begin{example}Let $f$ be an automorphism of a quandle $Q$. Then the set  $\{\beta_y=f~|~y \in X\}$ obviously forms a biquandle structure on $Q$, which is called the \textit{constant biquandle structure}.
\end{example}
\begin{example} Let $Q=(X, *)$ be a quandle. For $x\in X$ denote by $\beta_x=S_x^{-1}$. Then the family of automorphisms $\{\beta_x~|~x \in X \}$ forms a non-constant biquandle structure on $Q$. 
\end{example}
\begin{example}
Let $A= \mathbb{Z}_3^n$ for $n \geq1$, and ${\rm T}(A)$ be the Takasaki quandle on $A$. Then the family of automorphisms $\{\beta_x~|~x \in A \}$, where $\beta_x(y)= y+x$ for $x,y\in A$, forms a non-constant biquandle structure on ${\rm T}(A)$.
\end{example}
The following result is proved in \cite[Theorem 3.2]{Horvat}.
\begin{theorem}\label{horvatMain}
Let $\{\beta_y:Q\to Q~|~y\in X\}$ be a biquandle structure on a quandle  $Q=(X,*)$. Define two binary operations on $X$ by the rules $x\underline{*}y=\beta_y(x*y)$, $x\overline{*}y=\beta_y(x)$. Then $B=(X,\underline{*},\overline{*})$  is a biquandle with $\mathcal{Q}(B)=Q$. Moreover, every biquandle can be constructed in this way. 
\end{theorem}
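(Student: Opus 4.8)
The plan is to verify the three biquandle axioms for $B=(X,\underline{*},\overline{*})$ with $x\underline{*}y=\beta_y(x*y)$ and $x\overline{*}y=\beta_y(x)$, then to identify $\mathcal{Q}(B)$, and finally to reverse the argument to obtain the ``moreover'' part. Axiom~(1) is immediate from the quandle identity (q1): $x\underline{*}x=\beta_x(x*x)=\beta_x(x)=x\overline{*}x$. The equality $\mathcal{Q}(B)=Q$ is just as quick: since $x\overline{*}y=\beta_y(x)$, the map $z\mapsto z\,\overline{*}^{-1}y$ is $\beta_y^{-1}$, so the associated-quandle operation is $(x\underline{*}y)\overline{*}^{-1}y=\beta_y^{-1}\beta_y(x*y)=x*y$. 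For the three identities in axiom~(3) I would expand both sides using $x*y=S_y(x)$ and the fact that each $\beta_y$ is a quandle automorphism, so that $\beta_y(a*b)=\beta_y(a)*\beta_y(b)$; in each case the two sides become composites of the form $\beta_{(\cdot)}\beta_{(\cdot)}$ applied to an element of $Q$, and the biquandle-structure condition~(1) makes the two outer composites equal. Explicitly, (3b) and (3c) reduce to condition~(1) directly, while (3a) reduces to condition~(1) together with the rack axiom (r2) in the form $(x*z)*y=(x*y)*(z*y)$.

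The real work is the bijectivity of $S(x,y)=(\beta_x(y),\beta_y(x*y))$ on $X\times X$; the maps $\alpha_y=\beta_y\circ S_y$ and $\beta_y$ are obviously bijective. I would factor $S=P\circ T$, where $T(x,y)=(\beta_x(y),x)$ is a bijection with inverse $(a,x)\mapsto(x,\beta_x^{-1}(a))$, and $P(a,x)=\bigl(a,\ \beta_{\beta_x^{-1}(a)}(x*\beta_x^{-1}(a))\bigr)$. Thus everything comes down to showing that $g_a\colon x\mapsto \beta_{\beta_x^{-1}(a)}\bigl(x*\beta_x^{-1}(a)\bigr)$ is a bijection of $X$ for each $a\in X$. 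Writing $y=\beta_x^{-1}(a)$, hence $\beta_x(y)=a$, condition~(1) gives $\beta_{\beta_y(x*y)}\beta_y=\beta_{\beta_x(y)}\beta_x=\beta_a\beta_x$; evaluating at $y$ (and noting $\beta_y(x*y)=g_a(x)$) yields $\beta_{g_a(x)}\beta_y(y)=\beta_a(a)$, i.e. $\beta_y(y)=\beta_{g_a(x)}^{-1}\beta_a(a)$. Condition~(2), the bijectivity of $y\mapsto\beta_y(y)$, then recovers $y$, and with it $x=S_y^{-1}\beta_y^{-1}(g_a(x))$, uniquely from $a$ and $g_a(x)$; this gives injectivity of $g_a$. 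Surjectivity is the same computation read backwards: given $z$, let $y$ be the unique element with $\beta_y(y)=\beta_z^{-1}\beta_a(a)$, set $x=S_y^{-1}\beta_y^{-1}(z)$, and use conditions~(1) and~(2) to check $\beta_x(y)=a$, so that $g_a(x)=z$.

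For the converse, start from a biquandle $B=(X,\underline{*},\overline{*})$, put $Q=\mathcal{Q}(B)$, and define $\beta_y(x)=x\overline{*}y$; that each $\beta_y$ is an automorphism of $Q$ is part of the basic theory of the associated quandle \cite{Ashihara,Horvat}. From $x*y=\beta_y^{-1}(x\underline{*}y)$ (so $\beta_y(x*y)=x\underline{*}y$) and $\beta_x(y)=y\overline{*}x$, condition~(1) for $\{\beta_y\}$ unwinds to $(z\overline{*}y)\overline{*}(x\underline{*}y)=(z\overline{*}x)\overline{*}(y\overline{*}x)$ for all $x,y,z$, which is axiom~(3c) after relabelling $(x,y,z)\mapsto(z,x,y)$. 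For condition~(2), note that axiom~(1) gives $S(y,y)=(y\overline{*}y,\,y\underline{*}y)=(y\overline{*}y,\,y\overline{*}y)$, so $S$ maps the diagonal of $X\times X$ into itself with restriction $y\mapsto y\overline{*}y$; injectivity of this restriction follows from that of $S$, and for surjectivity one checks $S^{-1}$ also preserves the diagonal: if $y\overline{*}x=x\underline{*}y$, then condition~(1) (already verified) cancels to $\beta_x=\beta_y$, whence $x*y=y$ in $Q$, and then (q1) together with injectivity of the translation $S_y$ of $Q$ forces $x=y$. Finally, the biquandle produced from $\{\beta_y\}$ by the construction of the theorem has first operation $x\overline{*}y=\beta_y(x)$ and second operation $\beta_y(x*y)=\beta_y\bigl((x\underline{*}y)\overline{*}^{-1}y\bigr)=x\underline{*}y$, so it is exactly $B$.

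The one genuinely delicate point, and the place I expect to spend the most effort, is the invertibility of $S$ in the first half, hand in hand with the fact that condition~(2) holds for the reconstructed structure in the converse: both are controlled by the single diagonal map $y\mapsto\beta_y(y)$, and verifying $\beta_x(y)=a$ for the candidate preimage in the surjectivity step is the only computation that is not pure bookkeeping with (r2), (q1), and condition~(1).
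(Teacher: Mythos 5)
Your argument is correct, but there is no in-paper proof to measure it against: the paper does not prove Theorem~\ref{horvatMain} at all, it simply imports it from \cite[Theorem 3.2]{Horvat} (just as it imports the fact that $\mathcal{Q}(B)$ is a quandle from \cite{Ashihara}). Judged on its own, your proof holds up. The checks of axiom (1), of $\mathcal{Q}(B)=Q$, and of (3a)--(3c) are exactly as you describe: writing everything as $\beta_{(\cdot)}\beta_{(\cdot)}$ applied to an element of $Q$, identities (3b) and (3c) are condition (1) of a biquandle structure applied to the pair $(z,y)$ and $(y,z)$ respectively, while (3a) needs in addition (r2) in the form $(x*y)*(z*y)=(x*z)*y$. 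The factorization $S=P\circ T$ reducing invertibility of $S$ to bijectivity of $g_a$ is sound, and your use of the diagonal map is the right mechanism: from condition (1) one gets $\beta_{g_a(x)}\beta_y(y)=\beta_a(a)$ with $y=\beta_x^{-1}(a)$, so condition (2) recovers $y$ and then $x$, giving injectivity; and for the surjectivity step you only sketched, the verification $\beta_x(y)=a$ does go through, since evaluating $\beta_z\beta_y=\beta_{\beta_x(y)}\beta_x$ at $y$ gives $\beta_{\beta_x(y)}(\beta_x(y))=\beta_a(a)$ and injectivity of $c\mapsto\beta_c(c)$ forces $\beta_x(y)=a$. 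In the converse, the reduction of condition (1) to (3c), and your diagonal argument for condition (2) (using that $y\overline{*}x=x\underline{*}y$ forces $\beta_x=\beta_y$, hence $x*y=y$ and $x=y$), are both correct, as is the final check that the construction returns $B$. The only ingredient you outsource is that $\sigma_y\colon x\mapsto x\overline{*}y$ is an automorphism of $\mathcal{Q}(B)$; since the paper treats the whole associated-quandle package as cited, that is acceptable, but if you want the proof self-contained it follows quickly from (3b) and (3c): with $\tau_y(x)=x\underline{*}y$ and $S_y=\sigma_y^{-1}\tau_y$, axiom (3b) gives $\tau_{\sigma_z(y)}\sigma_z=\sigma_{\tau_y(z)}\tau_y$ and (3c) (with $y,z$ interchanged) gives $\sigma_{\sigma_z(y)}\sigma_z=\sigma_{\tau_y(z)}\sigma_y$, whence $S_{\sigma_z(y)}\sigma_z=\sigma_{\sigma_z(y)}^{-1}\sigma_{\tau_y(z)}\tau_y=\sigma_z\sigma_y^{-1}\tau_y=\sigma_z S_y$, i.e.\ $\sigma_z\in{\rm Aut}(\mathcal{Q}(B))$.
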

We say that a biquandle $B$ constructed in Theorem~\ref{horvatMain} is an associated to a biquandle structure $A=\{\beta_y:Q\to Q~|~y\in X\}$ on a quandle $Q$.

Thus, by Theorem~\ref{horvatMain} biquandles can be constructed from quandles, and it would be useful 
to have some canonical ways of constructing biquandles from quandles. In this section, we introduce new general constructions of biquandles from quandles: in Section~\ref{sunion} we introduce biquandles on unions of quandles, in Section~\ref{newproductofquandles} we introduce biquandles on products of quandles, and in Section~\ref{liftingconstruction} we study conditions when for a surjective quandle homomorphism $p:\widetilde{Q}\to Q$ biquandle structures on $Q$ can be lifted to biquandle structures on $\widetilde{Q}$.

\subsection{Biquandles on unions of quandles}\label{sunion}
The following method of constructing a new quandle from given quandles is introduced in \cite[Proposition 9.2]{BNS}.
\begin{proposition}\label{oldunion}
Let $Q_1=(X_1,*_1)$, $Q_2=(X_2,*_2)$ be quandles, and $\sigma: Q_1 \to  {\rm Conj}_{-1} \left({\rm Aut}(Q_2) \right)$, $\tau: Q_2 \to  {\rm Conj}_{-1} \left({\rm Aut}(Q_1) \right)$ be quandle homomorphisms. Then the set $X_1 \sqcup X_2$ with the operation
$$
x* y=\begin{cases}
x*_1y,& x, y \in Q_1, \\
x*_2 y,  &x, y \in Q_2, \\
{\tau(y)}(x),  &x \in Q_1, y \in Q_2, \\
{\sigma(y)}(x) &x \in Q_2, y \in Q_1
\end{cases}
$$
is a quandle if and only if the following conditions hold:
\begin{enumerate}
\item $\tau(z)(x)*_1 y=\tau\left(\sigma(y)(z)\right)(x*_1 y)$ for $x, y \in Q_1$ and $z \in Q_2$,
\item $\sigma(z)(x)*_2 y=\sigma\left(\tau(y)(z)\right)(x*_2 y)$ for $x, y \in Q_2$ and $z \in Q_1$.
\end{enumerate}
\end{proposition}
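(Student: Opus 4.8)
The plan is to verify the three quandle axioms (r1), (r2), (q1) for the operation $*$ defined on $X_1 \sqcup X_2$, checking each axiom case-by-case according to which of $Q_1$ or $Q_2$ the arguments belong to. The self-distributivity axiom (r2), $(x*y)*z=(x*z)*(y*z)$, has three arguments, so there are $2^3=8$ cases to consider; the invertibility axiom (r1) splits into $4$ cases depending on the membership of $x$ and $y$, and the idempotency axiom (q1) into $2$ cases. The guiding principle is that whenever all relevant arguments lie in a single $Q_i$, the corresponding axiom holds because $Q_i$ is already a quandle, and whenever arguments come from both pieces, the axiom reduces to a statement about how the homomorphisms $\sigma$ and $\tau$ interact with the operations.

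First I would dispose of (q1) and (r1). Idempotency $x*x=x$ holds on $Q_1$ and on $Q_2$ by their own axioms, and there is no mixed case since $x*x$ requires both arguments equal, hence in the same piece. For (r1), the map $S_y:x\mapsto x*y$: when $y\in Q_1$, it acts on $Q_1$ as $S_y^{(1)}$ (a bijection of $X_1$) and on $Q_2$ as $\sigma(y)$ (an automorphism of $Q_2$, hence a bijection of $X_2$), so $S_y$ is a bijection of $X_1\sqcup X_2$; the case $y\in Q_2$ is symmetric, using $\tau(y)\in\Aut(Q_1)$ and $S_y^{(2)}$. Note this part uses only that $\sigma(y),\tau(y)$ are automorphisms, not the hypothesis that $\sigma,\tau$ are quandle homomorphisms into $\Conj_{-1}$, nor conditions (1)–(2); those enter only through (r2).

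The heart of the argument is axiom (r2). In the two pure cases $x,y,z\in Q_1$ or $x,y,z\in Q_2$ it is immediate. I would then work through the six mixed cases. For instance, with $x,y\in Q_1$ and $z\in Q_2$: the left side is $(x*_1y)*z=\tau(z)(x*_1y)$ while the right side is $(x*z)*(y*z)=\tau(z)(x)*_1\tau(z)(y)$; since $\tau(z)\in\Aut(Q_1)$ preserves $*_1$, both sides agree automatically — this case needs no hypothesis. The genuinely constraining cases are those producing mismatched nestings, e.g. $x\in Q_1$, $y\in Q_1$, $z\in Q_2$ handled above versus $x\in Q_2$, $y\in Q_1$, $z\in Q_1$, or $x\in Q_1,y\in Q_2,z\in Q_1$, etc.; tracking carefully, the two cases $\{x\in Q_1,\ y,z \text{ split}\}$ reduce exactly to condition (1), and symmetrically the configuration with $x\in Q_2$ reduces to condition (2). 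A representative computation: take $x,y\in Q_1$, $z\in Q_2$ but now evaluate $(x*y)*z$ versus... — more to the point, the case $z\in Q_1,\ x\in Q_1,\ y\in Q_2$ gives $(x*y)*z=\sigma\!\big(\text{?}\big)$-type expressions that, after unwinding, become condition (1) with the roles of the variables matched to its statement; here one also needs that $\sigma$ and $\tau$ are \emph{homomorphisms into $\Conj_{-1}$}, i.e. $\sigma(a*_1b)=\sigma(b)\sigma(a)\sigma(b)^{-1}$, to rewrite $\sigma$ of a $*_1$-product, and likewise for $\tau$. I expect this bookkeeping — matching each of the eight cases to the correct one of (1), (2), or a triviality, and invoking the $\Conj_{-1}$-homomorphism identity in precisely the right spots — to be the main obstacle; it is not deep but it is the step where sign/order errors creep in. Finally, for the converse direction, one observes that specializing (r2) to the two configurations just described forces exactly conditions (1) and (2), so they are necessary as well as sufficient; hence the "if and only if."
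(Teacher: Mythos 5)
The paper does not actually prove this proposition: it is quoted from \cite[Proposition 9.2]{BNS}, so there is no in-paper argument to compare against. Your direct case-by-case verification is the natural (and essentially the only) route, and your skeleton is correct: (q1) and (r1) are immediate, the pure cases and the cases $x,y\in Q_i$, $z\in Q_j$ ($i\neq j$) of (r2) hold automatically because $\tau(z)$, respectively $\sigma(z)$, is an automorphism, and the converse follows by specializing (r2) to the constraining configurations. One concrete correction to your bookkeeping, which you yourself flagged as the danger spot: the configuration $x,z\in Q_1$, $y\in Q_2$ gives $(x*y)*z=\tau(y)(x)*_1z$ and $(x*z)*(y*z)=\tau\bigl(\sigma(z)(y)\bigr)(x*_1z)$, which is condition (1) verbatim after renaming --- no use of the $\Conj_{-1}$-homomorphism identity is needed there, contrary to what your sentence suggests; symmetrically $x,z\in Q_2$, $y\in Q_1$ is condition (2). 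The identity $\sigma(y*_1z)=\sigma(z)\sigma(y)\sigma(z)^{-1}$ (and its $\tau$-analogue) is instead what disposes of the two remaining cases, namely $x\in Q_2$, $y,z\in Q_1$, where (r2) reads $\sigma(z)\sigma(y)(x)=\sigma(y*_1z)\sigma(z)(x)$, and $x\in Q_1$, $y,z\in Q_2$, where it reads $\tau(z)\tau(y)(x)=\tau(y*_2z)\tau(z)(x)$; these therefore hold automatically and contribute no extra conditions. With that reassignment of which hypothesis serves which case, your outline closes up into a complete and correct proof.
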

\noindent The quandle $(X_1\sqcup X_2,*)$ introduced in Proposition~\ref{oldunion} is denoted by $Q=Q_1\underset{\sigma,\tau}{\sqcup} Q_2$ and is called the \textit{union of quandles $Q_1,Q_2$ with respect to $\sigma,\tau$}. See, for example, \cite[Theorem 3.3]{BarNas} for applications of union quandles. If $\sigma$, $\tau$ are trivial homomorphism, i.~e. $\sigma(Q_1)=\{id\}$, $\tau(Q_2)=\{id\}$, then the quandle $Q_1\underset{\sigma,\tau}{\sqcup} Q_2$ is denoted by $Q_1\sqcup Q_2$ and is called the \textit{union of quandles $Q_1,Q_2$.} In this section we introduce a general way of constructing biquandles on a union of two quandles.

Let $Q_1=(X_1,*_1)$, $Q_2=(X_2,*_2)$ be quandles, and $Q=Q_1\sqcup Q_2=(X_1\sqcup X_2, *)$ be the union of quandles $Q_1$, $Q_2$. If $f$ is an automorphism of $Q_i$, then $f$ can be extended to an automorphism of $Q$ in the following way
\begin{equation}\label{tildedefinition}
\widetilde{f}(x)=\begin{cases}
f(x),& x \in X_i\\
x, & x\not\in X_i.\\
\end{cases}
\end{equation} 
Let $\phi: Q_1 \to {\rm Conj}_{-1} ({\rm Aut}(Q_2))$, $\psi: Q_2 \to {\rm Conj}_{-1} ({\rm Aut} (Q_1))$ be homomorphisms. For $x\in Q_1$, $y\in Q_2$ denote by $\phi(x)=\phi_x$, $\psi(y)=\psi_y$. Further, for  $a\in X_1\sqcup X_2$ denote by $\beta_a$ the following map from $X_1\sqcup X_2$ to itself 
\begin{equation}\label{betaunion}
\beta_a=\begin{cases}
\widetilde{\phi_a},& a \in X_1,\\
\widetilde{\psi_a},& a \in X_2.
\end{cases}
\end{equation}
It is clear that $\beta_a$ is an automorphism of $Q_1\sqcup Q_2$. The following result gives conditions under which the set $\{\beta_a~|~a\in X_1\sqcup X_2\}$ forms a biquandle structure on $Q_1\sqcup Q_2$.

\begin{theorem}\label{gen-biquandle-union}
Let $Q_1=(X_1,*_1)$, $Q_2=(X_2,*_2)$ be quandles, $X=X_1\sqcup X_2$, and $Q= (X,*)$ be the union of quandles $Q_1$, $Q_2$. Let $\phi: Q_1 \to {\rm Conj}_{-1} ({\rm Aut}(Q_2))$ and $\psi: Q_2 \to {\rm Conj}_{-1} (\Aut (Q_1))$ be quandle homomorphisms such that 
\begin{align}
\label{unionconditions}\phi_{x_1}= \phi_{\psi_{x_2}(x_1)},&&\psi_{x_2}= \psi_{\phi_{x_1}(x_2)}
\end{align}
for all $x_1 \in Q_1$, $x_2 \in Q_2$. Then the family of automorphisms $\{\beta_y~|~y \in X \}$ defined by (\ref{betaunion}) is a biquandle structure on $Q$.
\end{theorem}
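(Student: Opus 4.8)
The plan is to verify directly that the family $\{\beta_y \mid y \in X\}$ defined by (\ref{betaunion}) satisfies the two conditions in the definition of a biquandle structure: namely (1) the compatibility relation $\beta_{\beta_y(x*y)}\beta_y = \beta_{\beta_x(y)}\beta_x$ for all $x,y \in X$, and (2) the bijectivity of the map $y \mapsto \beta_y(y)$. I will not attempt to produce an associated biquandle by hand; rather I exploit Theorem~\ref{horvatMain}, for which establishing these two conditions suffices. The main point to observe first is that each $\beta_a$ is genuinely an automorphism of $Q = Q_1 \sqcup Q_2$: since $\phi_a \in \Aut(Q_2)$ (resp.\ $\psi_a \in \Aut(Q_1)$) and the "other" quandle is fixed pointwise, $\widetilde{\phi_a}$ and $\widetilde{\psi_a}$ preserve all four cases of the operation $*$ on the union. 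This uses crucially that $\sigma,\tau$ are trivial, so there is no interaction term to worry about.

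For condition (1), I would split into cases according to which of $X_1, X_2$ the elements $x$ and $y$ lie in. If $x,y \in X_1$, then $x*y = x *_1 y \in X_1$, so $\beta_y = \widetilde{\phi_y}$, $\beta_x = \widetilde{\phi_x}$, $\beta_{\beta_y(x*y)} = \widetilde{\phi_{\phi_y(x*_1 y)}}$ and $\beta_{\beta_x(y)} = \widetilde{\phi_{\phi_x(y)}}$; the identity then reduces to $\phi_{\phi_y(x*_1 y)}\phi_y = \phi_{\phi_x(y)}\phi_x$ as automorphisms of $Q_2$, which is exactly the statement that $\{\phi_y\}$ is a biquandle structure on $Q_1$ — but that in turn follows from the general fact (Example after Theorem~\ref{horvatMain}, i.e.\ $\beta_x = S_x^{-1}$) once we rewrite things, OR more directly from the hypothesis that $\phi$ is a homomorphism into $\Conj_{-1}(\Aut(Q_2))$. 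Indeed $\phi(x *_1 y) = \phi(x) *_{-1} \phi(y) = \phi(y)\phi(x)\phi(y)^{-1}$, giving $\phi_{x*_1 y} = \phi_y \phi_x \phi_y^{-1}$, and a short manipulation combined with $\phi_{x_1} = \phi_{\psi_{x_2}(x_1)}$ (which forces a kind of constancy, but on the relevant orbits) yields the claim; the symmetric case $x,y \in X_2$ is identical with $\phi \leftrightarrow \psi$. The genuinely mixed cases are $x \in X_1, y \in X_2$ and $x \in X_2, y \in X_1$: here $x*y = \psi_y(x) \in X_1$ in the first case, so $\beta_{\beta_y(x*y)} = \widetilde{\phi_{\psi_y(x)}}$ (since $\beta_y = \widetilde{\psi_y}$ acts as identity on $X_1$, wait — $\psi_y(x) \in X_1$ and $\beta_y$ restricted to $X_1$ is $\psi_y$, so $\beta_y(x*y) = \psi_y(\psi_y(x))$... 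I must be careful here), while on the other side $\beta_{\beta_x(y)}\beta_x = \widetilde{\phi_x}\,\widetilde{\cdots}$, and one checks both sides act as $\widetilde{\psi_y}$-conjugates on $X_1$ and as $\widetilde{\phi_x}$-type maps on $X_2$; the equalities (\ref{unionconditions}) $\phi_{x_1} = \phi_{\psi_{x_2}(x_1)}$ and $\psi_{x_2} = \psi_{\phi_{x_1}(x_2)}$ are precisely what is needed to make the two compositions agree on each of $X_1$ and $X_2$ separately.

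For condition (2), note that $y \mapsto \beta_y(y)$ sends $X_1$ into $X_1$ and $X_2$ into $X_2$ (an element of $X_i$ is moved by $\beta_y$ only when $y \in X_i$, and then stays in $X_i$). So it suffices to check bijectivity on each piece. On $X_1$, the map is $y \mapsto \psi$-side does not apply; for $y \in X_1$, $\beta_y(y) = \phi_y(y)$, and I need $y \mapsto \phi_y(y)$ to be a bijection of $X_1$. This should follow from the hypothesis $\phi_{y} = \phi_{\psi_{x_2}(y)}$ together with the fact that $\phi\colon Q_1 \to \Conj_{-1}(\Aut(Q_2))$ is a quandle homomorphism — actually the cleanest route is: since $\phi$ is a quandle homomorphism and $\phi_{x_1} = \phi_{\psi_{x_2}(x_1)}$ for all $x_2$, the assignment $y \mapsto \phi_y$ is constant on $\psi$-orbits; one then shows $y \mapsto \phi_y(y)$ has an inverse. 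I expect the bijectivity of $y \mapsto \phi_y(y)$ on $X_1$ (and $y \mapsto \psi_y(y)$ on $X_2$) to be the main obstacle, since unlike condition (1) it is not a pure identity-chasing matter — it requires producing an inverse map, and the cleanest argument may be to observe that it is exactly condition (2) for the biquandle structure $\{\phi_y\}$-pulled-back, or to argue via finiteness/injectivity. I would handle it by showing injectivity (if $\phi_y(y) = \phi_z(z)$ then using the homomorphism property and (\ref{unionconditions}) deduce $y = z$) and surjectivity separately, or by exhibiting the inverse explicitly in terms of $\phi^{-1}$-type data. Once both conditions are verified, Theorem~\ref{horvatMain} applies and $\{\beta_y \mid y \in X\}$ is a biquandle structure on $Q$, completing the proof.
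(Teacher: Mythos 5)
Your overall skeleton (verify the two axioms of a biquandle structure by a case analysis, using the homomorphism property $\phi_{x*_1y}=\phi_y\phi_x\phi_y^{-1}$ and the hypotheses (\ref{unionconditions}) in the mixed cases) is the same as the paper's, but the execution has a genuine gap coming from a misreading of how the $\beta_a$ act. For $a\in X_1$ the automorphism $\phi_a$ lies in $\Aut(Q_2)$, so $\beta_a=\widetilde{\phi_a}$ fixes $X_1$ pointwise and moves only $X_2$ (and symmetrically for $a\in X_2$). This has three consequences that contradict your write-up. First, condition (2) is trivial: $\beta_y(y)=y$ for every $y\in X$, so the map $y\mapsto\beta_y(y)$ is the identity. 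Your claim that $\beta_y(y)=\phi_y(y)$ for $y\in X_1$, and that proving bijectivity of $y\mapsto\phi_y(y)$ is ``the main obstacle'', does not parse: $\phi_y$ does not act on $X_1$ at all, and none of the injectivity/surjectivity machinery you propose is needed. Second, in the case $x,y\in X_1$ your reduction is wrong for the same reason: $\beta_y(x*y)=x*_1y$ and $\beta_x(y)=y$, so the identity to check is $\beta_{x*_1y}\beta_y=\beta_y\beta_x$, which follows immediately from the conjugation formula $\beta_{x*y}=\beta_y\beta_x\beta_y^{-1}$ (a direct consequence of $\phi$, $\psi$ being homomorphisms into $\Conj_{-1}$ of the automorphism groups); it does not reduce to $\phi_{\phi_y(x*_1y)}\phi_y=\phi_{\phi_x(y)}\phi_x$, and (\ref{unionconditions}) plays no role in this case.

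Third, in the mixed case you use the wrong quandle operation: in $Q=Q_1\sqcup Q_2$ (the union with trivial actions, which is the associated quandle here) one has $x*y=x$ for $x\in X_1$, $y\in X_2$, not $x*y=\psi_y(x)$; the latter is the operation of the twisted union $Q_1\underset{\sigma,\tau}{\sqcup}Q_2$, which is not the quandle in the statement. The mixed cases are precisely where (\ref{unionconditions}) is needed — for instance, for $x,z\in X_1$, $y\in X_2$ one uses $\beta_{\beta_x(y)}=\widetilde{\psi_{\phi_x(y)}}=\widetilde{\psi_y}$, and for $x\in X_1$, $y,z\in X_2$ one uses $\beta_{\beta_y(x)}=\widetilde{\phi_{\psi_y(x)}}=\widetilde{\phi_x}$ — but in your proposal these cases are only asserted (``one checks both sides agree''), and with the incorrect formulas above the verification could not be carried out as written. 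To repair the argument, fix the action convention, note $\beta_y(y)=y$ so condition (2) is immediate, establish $\beta_{x*y}=\beta_y\beta_x\beta_y^{-1}$ for all $x,y\in X$ (in the mixed cases this holds because $\widetilde{\phi_x}$ and $\widetilde{\psi_y}$ have disjoint supports and commute), and then run the case analysis for condition (1), invoking (\ref{unionconditions}) exactly in the cases where $x$ and $y$ lie in different components.
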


\begin{proof}
Since $\phi,\psi$ are quandle homomorphisms, for all $x_1,y_1\in X_1$, $x_2,y_2\in Q_2$, we have
\begin{align}
\notag \phi(x_1*y_1)=\phi(y_1)\phi(x_1) \phi(y_1)^{-1},&&\psi(x_2*y_2)=\psi(y_2)\psi(x_2) \psi(y_2)^{-1}.
\end{align} 
From these equalities and definition (\ref{betaunion}) follows that the equality
\begin{equation}\label{beta-conjugation}
\beta_{x*y}=\beta_y\beta_x \beta_y^{-1}
\end{equation}
holds for all $x,y\in X$.

Let $y$ be an arbitrary element of $Q$. If $y\in X_1$, then by (\ref{betaunion}) we have $\beta_y(y)=\widetilde{\phi_y}(y)$. By (\ref{tildedefinition}) we have $\widetilde{\phi_y}(y)=y$, and therefore $\beta_y(y)=y$. Similarly, if $y\in X_2$, then $\beta_y(y)=\widetilde{\psi_y}(y)=y$. Hence, the map $y\mapsto\beta_y(y)$ is identity, and hence is a bijection. So, in order to prove that $\{\beta_y~|~y\in Q\}$ is a biquandle structure on $Q$ we need to check that the equality 
\begin{equation}\label{necessaryunion}
\beta_{\beta_y(x*y)}\beta_y(z)=\beta_{\beta_x(y)}\beta_x(z)
\end{equation}
holds for all $x,y,z\in Q$. Depending on $x,y,z$, we have the following cases.

\textit{Case 1}: $x, y, z \in X_1$. In this case
$$ 
\beta_{\beta_y(x*y)}\beta_y(z) = \beta_{x*y} (z)= z=\beta_{x}(z)= \beta_{y}\beta_x(z)=\beta_{\beta_x(y)}\beta_x(z),$$
i.~e. equality (\ref{necessaryunion}) holds. The case when  $x, y, z \in X_2$ is similar.

\textit{Case 2}: $x, y \in X_1$, $z \in X_2$. In this case
\begin{align}
\notag\beta_{\beta_y(x*y)}\beta_y(z) &= \beta_{x*y} \beta_y(z)\\
\notag&= \beta_{y} \beta_x(z),~ \textrm{by}~\eqref{beta-conjugation}\\
\notag&= \beta_{\beta_x(y)} \beta_x(z),
\end{align}
i.~e. equality (\ref{necessaryunion}) holds. The case when  $x, y \in X_2$ and $z \in X_1$ is similar.

\textit{Case 3}: $x, z \in X_1$, $y \in X_2$. In this case 
\begin{align}
\notag\beta_{\beta_y(x*y)}\beta_y(z) &=\beta_{\beta_y(x)}\beta_y(z)\\
\notag&= \beta_y(z)\\
\notag&= \beta_{\beta_x(y)} (z),~ \textrm{by}~(\ref{unionconditions})\\
\notag&= \beta_{\beta_x(y)} \beta_x(z),
\end{align}
i.~e. equality (\ref{necessaryunion}) holds. The case when  $x\in X_2$ and $y \in X_1$ is similar.
\end{proof}
Theorem~\ref{gen-biquandle-union} gives a biquandle structure on the union $Q_1\sqcup Q_2$ of quandles $Q_1,Q_2$. By Theorem~\ref{horvatMain} using a biquandle structure on a given quandle one can construct a biquandle. The biquandle associated to the biquandle structure  $\{\beta_a~|~a\in X_1\sqcup X_2\}$ of the quandle $(X_1,*_1)\sqcup (X_2,*_2)$ is a biquandle on the set $X_1\sqcup X_2$ with operations given by
\begin{align}
\notag a,b\in X_1 &\Rightarrow a\overline{*}b=a, a\underline{*}b=a*_1b,\\
\notag a,b\in X_2 &\Rightarrow a\overline{*}b=a, a\underline{*}b=a*_2b,\\
\notag a\in X_1, b\in X_2 &\Rightarrow a\overline{*}b=\psi_b(a), a\underline{*}b=\psi_b(a),\\
\notag a\in X_2, b\in X_1 &\Rightarrow a\overline{*}b=\phi_b(a), a\underline{*}b=\phi_b(a).
\end{align} 
This biquandle is denote by $B(Q_1~{}_{\varphi}\bigsqcup{}_{\psi}~Q_2)$, and is called the \textit{union} biquandle.
\begin{proposition}
Let $Q_1=(X_1,*_1)$, $Q_2=(X_2,*_2)$ be involutory quandles, and $Q=Q_1\sqcup Q_2$. Let $\phi: Q_1 \to {\rm Conj}_{-1} ({\rm Aut}(Q_2))$, $\psi: Q_2 \to {\rm Conj}_{-1} ({\rm Aut} (Q_1))$ be homomorphisms which satisfy (\ref{unionconditions}). If $\phi_{x}^2=id$ for all $x\in Q_1$, $\psi_y^2=id$ for all $y\in Q_2$, then the biquandle $B(Q_1~{}_{\varphi}\bigsqcup{}_{\psi}~Q_2)$ is involutory.
\end{proposition}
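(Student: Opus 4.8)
The plan is to verify directly the four defining equalities \eqref{inv-biquandle-condition} of an involutory biquandle for $B=B(Q_1~{}_{\varphi}\bigsqcup{}_{\psi}~Q_2)$, since $B$ is already known to be a biquandle by Theorem~\ref{gen-biquandle-union} together with Theorem~\ref{horvatMain}. The verification will proceed by a case analysis on where $x$ and $y$ lie in the decomposition $X=X_1\sqcup X_2$. The first thing I would record is a bookkeeping remark: because the mixed operations are given by the automorphisms $\psi_b\in\Aut(Q_1)$ and $\phi_b\in\Aut(Q_2)$, for $x\in X_1$ and $b\in X_2$ the element $x\underline{*}b=x\overline{*}b=\psi_b(x)$ again lies in $X_1$, and symmetrically $\phi_b(x)\in X_2$ when $x\in X_2$, $b\in X_1$; this is what makes the iterated expressions in \eqref{inv-biquandle-condition} land in the expected summand.

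For the pure cases, if $x,y\in X_1$ then $x\overline{*}y=x$ and $x\underline{*}y=x*_1y$, so $(x\overline{*}y)\overline{*}y=x$, while $(x\underline{*}y)\underline{*}y=(x*_1y)*_1y=x$ by involutivity of $Q_1$; and since $y\overline{*}x=y$, $y\underline{*}x=y*_1x\in X_1$, one gets $x\underline{*}(y\overline{*}x)=x\underline{*}y$ and $x\overline{*}(y\underline{*}x)=x=x\overline{*}y$. The case $x,y\in X_2$ is identical with $Q_2$ in place of $Q_1$. For the mixed case $x\in X_1$, $y\in X_2$, we have $x\underline{*}y=x\overline{*}y=\psi_y(x)\in X_1$ and $y\underline{*}x=y\overline{*}x=\phi_x(y)\in X_2$, so $(x\overline{*}y)\overline{*}y=\psi_y(\psi_y(x))=\psi_y^2(x)=x$ and similarly $(x\underline{*}y)\underline{*}y=x$, using $\psi_y^2=\id$. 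For the two nested identities, since $\phi_x(y)\in X_2$ one has $x\underline{*}\phi_x(y)=x\overline{*}\phi_x(y)=\psi_{\phi_x(y)}(x)$, and the second relation in \eqref{unionconditions} gives $\psi_{\phi_x(y)}=\psi_y$; hence $x\underline{*}(y\overline{*}x)=\psi_y(x)=x\underline{*}y$ and $x\overline{*}(y\underline{*}x)=\psi_y(x)=x\overline{*}y$. The case $x\in X_2$, $y\in X_1$ is the mirror image, using $\phi_y^2=\id$ for the doubled identities and the first relation $\phi_{\psi_x(y)}=\phi_y$ of \eqref{unionconditions} for the nested ones.

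The argument is essentially routine and involves no real obstacle; the only point that needs minor care — and the only place where the extra hypotheses genuinely enter — is tracking, in the mixed cases, which of $X_1$ or $X_2$ each intermediate element belongs to, so that the correct instance of \eqref{unionconditions} and of the conditions $\phi_x^2=\id$, $\psi_y^2=\id$ is invoked. Once all four equalities of \eqref{inv-biquandle-condition} are checked in every case, the proposition follows.
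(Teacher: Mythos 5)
Your proposal is correct and follows essentially the same route as the paper: a direct case-by-case verification of the four identities \eqref{inv-biquandle-condition}, using involutivity of $Q_1,Q_2$ in the pure cases and the conditions \eqref{unionconditions} together with $\phi_x^2=\id$, $\psi_y^2=\id$ in the mixed cases. Nothing further is needed.
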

\begin{proof} In order to prove that $B(Q_1~{}_{\varphi}\bigsqcup{}_{\psi}~Q_2)$ is involutory, we need to check that equalities (\ref{inv-biquandle-condition}) hold for all $x,y\in X_1\sqcup X_2$. Depending on $x,y$, we have the following cases.  

\textit{Case 1}: $x, y \in X_1$. In this case
\begin{align}
\notag x \underline{*} (y \overline{*} x) &= x*_1y=x \underline{*} y,\\
\notag x \overline{*} (y \underline{*} x) &=  x=  x \overline{*} y,\\
\notag (x \underline{*} y) \underline{*} y &=  (x*_1y) *_1 y=x,~\text{since}~ Q_1~\textrm{is involutory},\\
\notag(x \overline{*} y) \overline{*} y&=(x*_1y)*_1y = x,~\text{since}~ Q_1~\textrm{is involutory},
\end{align}
i.~e. equalities (\ref{inv-biquandle-condition}) hold. The case when $x,y\in X_2$ is similar.

\textit{Case 2}: $x \in X_1$, $y \in X_2$. In this case
$$
\notag x \underline{*} (y \overline{*} x) = x \underline{*} \phi_x(y)= \psi_{\phi_x(y)}(x)= \psi_y(x)= x \underline{*} y,
$$
i.~e. the first equality from  (\ref{inv-biquandle-condition}) holds. 
Similarly, we have
\begin{align}
\notag x \overline{*} (y \underline{*} x) &= x \overline{*} \phi_x(y)= \psi_{\phi_x(y)}(x)=\psi_y(x)=x \overline{*} y,\\
\notag(x \underline{*} y) \underline{*} y &= \psi_y(x)\underline{*} y= \psi_y^2(x)=x,~\text{since}~\psi_y^2=id,\\
\notag(x \overline{*} y) \overline{*} y &= \psi_y(x)\overline{*} y= \psi_y^2(x)=x,~\text{since}~\psi_y^2=id,
\end{align}
i.~e. all equalities  (\ref{inv-biquandle-condition}) hold. The case when $x\in X_2$, $y\in X_1$ is similar. Thus, we proved that  $B(Q_1~{}_{\varphi}\bigsqcup{}_{\psi}~Q_2)$ is involutory.
\end{proof}
If $\phi: Q_1 \to {\rm Conj}_{-1} ({\rm Aut}(Q_2))$, $\psi:Q_2 \to {\rm Conj}_{-1} ({\rm Aut}(Q_1))$ are constant maps, i.~e. for all $x\in Q_1$, $y\in Q_2$ we have $\phi_x=g$, $\psi_y=f$ for some fixed automorphisms $f\in {\rm Aut}(Q_2)$, $g\in {\rm Aut}(Q_1)$, then conditions (\ref{unionconditions}) obviously hold. Hence, Theorem~\ref{gen-biquandle-union} implies the following result. 
\begin{corollary}\label{constant-union-biquandle}
Let $Q_1=(X_1,*_1)$, $Q_2=(X_2,*_2)$ be quandles, $f \in {\rm Aut}(Q_1)$ and $g \in {\rm Aut}(Q_2)$. For each $y \in X_1\sqcup X_2$, define 
$$\beta_y=\begin{cases}
\tilde{g},& y \in X_1,\\
\tilde{f},& y \in X_2.
\end{cases}$$ 
Then the family of automorphisms $\{\beta_y~|~y \in X_1\sqcup X_2 \}$ is a biquandle structure on $Q= Q_1 \sqcup Q_2$.
\end{corollary}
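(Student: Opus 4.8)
The plan is to obtain the statement as an immediate specialization of Theorem~\ref{gen-biquandle-union}, taking the homomorphisms $\phi$ and $\psi$ occurring there to be constant. Concretely, I would set $\phi\colon Q_1 \to {\rm Conj}_{-1}({\rm Aut}(Q_2))$ to be the constant map with value $g$ and $\psi\colon Q_2 \to {\rm Conj}_{-1}({\rm Aut}(Q_1))$ to be the constant map with value $f$, then check that the hypotheses of Theorem~\ref{gen-biquandle-union} hold for this choice, and finally identify the resulting family~\eqref{betaunion} with the family $\{\beta_y\}$ in the statement.

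First I would verify that these constant maps are quandle homomorphisms. In general, for any quandle $R$ and any fixed $c \in R$ the constant map $x \mapsto c$ is a quandle homomorphism, since $c * c = c$ forces $\phi(x *_1 y) = c = c * c = \phi(x) * \phi(y)$; applying this with $R = {\rm Conj}_{-1}({\rm Aut}(Q_2))$, $c = g$ (where indeed $g * g = g g g^{-1} = g$) shows $\phi$ is a quandle homomorphism, and likewise for $\psi$ with $c = f$. Next I would observe that the compatibility conditions~\eqref{unionconditions} hold trivially for these choices: for all $x_1 \in Q_1$, $x_2 \in Q_2$ both sides of $\phi_{x_1} = \phi_{\psi_{x_2}(x_1)}$ equal $g$, and both sides of $\psi_{x_2} = \psi_{\phi_{x_1}(x_2)}$ equal $f$. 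Finally, for this $\phi,\psi$ the family defined by~\eqref{betaunion} is exactly $\beta_y = \widetilde{\phi_y} = \tilde g$ for $y \in X_1$ and $\beta_y = \widetilde{\psi_y} = \tilde f$ for $y \in X_2$, i.e.\ the family in the statement; hence Theorem~\ref{gen-biquandle-union} applies and gives that $\{\beta_y \mid y \in X_1 \sqcup X_2\}$ is a biquandle structure on $Q = Q_1 \sqcup Q_2$.

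There is no genuine obstacle here: the corollary is a direct consequence of the theorem. The only points requiring a moment's care are (i) the routine observation that constant maps land in $\Conj_{-1}({\rm Aut}(Q_i))$ as homomorphisms, and (ii) keeping straight that $g \in {\rm Aut}(Q_2)$ is the value of $\phi$ (so it governs $\beta_y$ for $y \in X_1$) while $f \in {\rm Aut}(Q_1)$ is the value of $\psi$ (so it governs $\beta_y$ for $y \in X_2$), in accordance with the index conventions of~\eqref{betaunion}. Alternatively, one could give a short self-contained proof by repeating the three cases in the proof of Theorem~\ref{gen-biquandle-union} with $\phi_a$ replaced by $g$ and $\psi_a$ by $f$ throughout, but invoking the theorem is cleaner.
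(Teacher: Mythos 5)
Your proposal is correct and follows essentially the same route as the paper: the paper likewise obtains the corollary by specializing Theorem~\ref{gen-biquandle-union} to constant maps $\phi$, $\psi$, observing that conditions~(\ref{unionconditions}) hold trivially. Your extra remark that a constant map into $\Conj_{-1}(\Aut(Q_i))$ is a quandle homomorphism (by idempotency) is a point the paper leaves implicit, and your bookkeeping of which automorphism governs $\beta_y$ on $X_1$ versus $X_2$ matches the conventions of~(\ref{betaunion}).
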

The biquandle structure described in Corollary~\ref{constant-union-biquandle} is the next simplest biquandle structure after the constant biquandle structures since it contains only two automorphisms. The biquandle associated to the biquandle structure $\{\beta_a~|~a\in X_1\sqcup X_2\}$ described in Corollary~\ref{constant-union-biquandle} is a biquandle on the set $X_1\sqcup X_2$ with the operations given by
\begin{align}
\label{rulesunion1} a,b\in X_1 &\Rightarrow a\overline{*}b=a, a\underline{*}b=a*_1b,\\
\label{rulesunion2} a,b\in X_2 &\Rightarrow a\overline{*}b=a, a\underline{*}b=a*_2b,\\
\label{rulesunion3} a\in X_1, b\in X_2 &\Rightarrow a\overline{*}b=f(a), a\underline{*}b=f(a),\\
\label{rulesunion4} a\in X_2, b\in X_1 &\Rightarrow a\overline{*}b=g(a), a\underline{*}b=g(a).
\end{align} 
This biquandle is denoted by $B(Q_1~{}_{g}\bigsqcup{}_{f}~Q_2)$. 

\begin{remark}\label{uniontrivial}The trivial quandle $T_{n}$ can be written as a union $T_{n}=T_m\sqcup T_k$ of trivial quandles $T_m$, $T_k$, where $m+k=n$. Every permutation of the trivial quandle is an automorphism of the trivial quandle, so, Corollary~\ref{constant-union-biquandle} gives $m!k!$ biquandle structures on $T_{n}$ for each decomposition $T_n=T_m\sqcup T_k$.
\end{remark}

\begin{example}\label{trivial-biquandle-distinguish}
 As we noticed at the end of Section~\ref{quandlesprem}, quandle-coloring invariant defined  by the trivial quandle $T_n$ cannot distinguish a $r$-component virtual link from the trivial link with $r$ components. However, if  from the trivial quandle $T_n$ we construct a biquandle $B$ using some biquandle structure, then the biquandle-coloring invariant defined by $B$ would be able to distinguish different virtual links with the same number of components. For example, let $m,k\geq 2$ be integers $Q_1=T_m=\{x_1,\dots,x_m\}$, $Q_2=T_k=\{y_1,\dots,y_k\}$ be trivial quandles, $f=(x_1,x_2,\dots,x_m)$ be a cycle on $Q_1$, and $g=(y_1,y_2,\dots,y_k)$ be a cycle on $Q_2$. It is clear that $f$ is an automorphism of $Q_1$, and $g$ is an automorphism of $Q_2$. Thus, using formulas (\ref{rulesunion1}), (\ref{rulesunion2}), (\ref{rulesunion3}), (\ref{rulesunion4}), we can define a biquandle $B=B(Q_1~{}_{g}\bigsqcup{}_{f}~Q_2)$ on the set $\{x_1,\dots,x_m,y_1,\dots,y_k\}$.

Let us prove the biquandle-coloring invariant defined by $B$ distinguishes the virtual Hopf link $H$ (see Figure \ref{biquandlerelations}) from the trivial $2$-component link $U$. 
\begin{figure}[hbt!]
\noindent\centering{
\includegraphics[height=30mm]{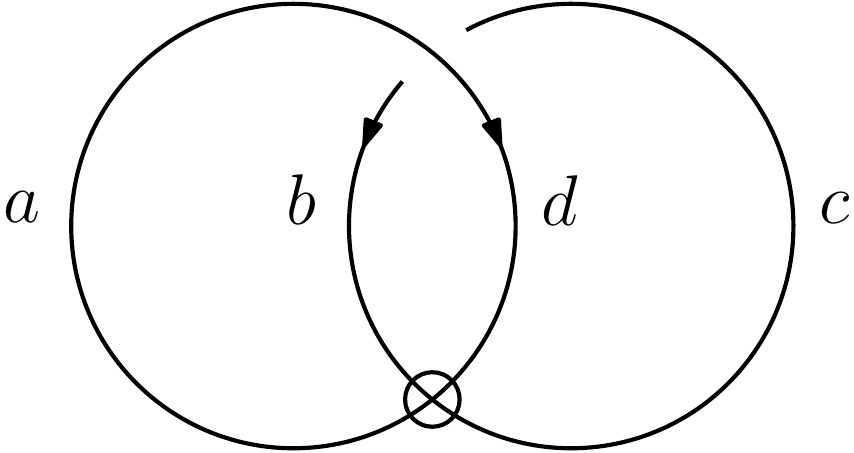}}
\caption{Labels of arcs of the virtual Hopf link.}
\label{biquandlerelations}
\end{figure}

Since $U$ has no crossings, every coloring of $U$ is proper, and therefore $C_B(U)=(m+k)^2$. In order to calculate $C_B(H)$, label the arcs of $H$ by  elements $a,b,c,d\in B$ as it is depicted on  
Figure~\ref{biquandlerelations}. The coloring by $a,b,c,d$ is proper if and only if the following equalities hold. 
\begin{align}
\label{coloring1} &d=a,&&c=b,&&\text{from the virtual crossing},\\
\label{coloring2} &c=b\underline{*}a,&&d=a\overline{*}b,&&\text{from the classical crossing}.
\end{align}
From equality (\ref{coloring1}) it is clear that the proper coloring of $H$ is completely defined by labels $a,b$. If $a,b\in Q_1$ or $a,b\in Q_2$, then equalities (\ref{coloring1}), (\ref{coloring2}) obviously hold, and therefore every coloring of $H$ with $a,b\in Q_1$ or $a,b\in Q_2$ is proper. There are $m^2+k^2$ such colorings ($m^2$ colorings when $a,b\in Q_1$, and $k^2$ colorings when $a,b\in Q_2$). If $a\in Q_1$, $b\in Q_2$, then $b\underline{*}a=g(b)\neq b$ (since $k\geq 2$, and $g$ is the cycle $g=(y_1,y_2,\dots,y_k)$). Hence, equalities (\ref{coloring1}), (\ref{coloring2}) cannot hold, and the coloring of $H$ with $a\in Q_1$, $b\in Q_2$ is not proper. Similarly, we can prove that the coloring of $H$ with $a\in Q_2$, $b\in Q_1$ is not proper, and hence $C_B(H)=m^2+k^2\neq (m+k)^2=C_B(U)$. Thus, biquandle-coloring invariant defined by $B$ distinguishes the virtual Hopf link from the trivial link with $2$ components, while the quandle-coloring invariant defined by $\mathcal{Q}(B)=T_{n+m}$ does not distinguish $H$ from $U$.
\end{example}

The preceding example shows that biquandles constructed on trivial quandles can distinguish different virtual knots and links (despite the fact that the trivial quandle cannot distinguish any two links with the same number of components). The following problem appears naturally in this context.
 
\begin{problem}\label{classificationTrivial}
Classify biquandle structures on the trivial quandle $T_n$ with $n$ elements.
\end{problem}

Due to Remark~\ref{uniontrivial}, using Theorem~\ref{gen-biquandle-union} and Corollary~\ref{constant-union-biquandle} it is possible to construct a lot of non-trivial biquandle structures on trivial quandles. 

\subsection{Biquandles on products of quandles}\label{newproductofquandles} 

In \cite{Kamada} the following construction of a biquandle from two quandles was introduced (see also \cite[Section~5]{Horvat}). Let $Q_1=(X_1,*_1)$, $Q_2=(X_2,*_2)$ be quandles. For $(x,a), (y,b)\in X_1\times X_2$ denote by
\begin{align}
(x,a)\underline{*}(y,b)=(x*_1y,a),&&(x,a)\overline{*}(y,b)=(x,a*_2b).
\end{align}
Then $(X_1\times X_2,\underline{*},\overline{*})$ is a biquandle called the \textit{product biquandle} of quandles $Q_1$, $Q_2$. In this section we generalize this construction.

Let $Q_1=(X_1, *_1)$, $Q_2=(X_2, *_2)$ be quandles, and $\phi: Q_1 \to {\rm Conj}_{-1} ({\rm Aut}(Q_2))$, $\psi: Q_2 \to {\rm Conj}_{-1} ({\rm Aut} (Q_1))$ be homomorphisms with $\phi(x)=\phi_x$ for $x\in Q_1$, $\psi(y)=\psi_y$ for $y\in Q_2$. For $(x, a), (y, b) \in X_1 \times X_2$ denote by $(x, a) \underline{*} (y, b)$, $(x, a) \overline{*} (y, b)$ the following elements of $X_1\times X_2$.
\begin{align}
\label{productrule1}(x, a) \underline{*} (y, b)= \big(\psi_b(x*_1y), \phi_y(a) \big)\\
\label{productrule2}(x, a) \overline{*} (y, b)= \big(\psi_b(x), \phi_y(a *_2 b)\big)
\end{align}
We have the following result.

\begin{theorem}\label{action-product-biquandle}
Let $Q_1=(X_1, *_1)$, $Q_2=(X_2, *_2)$ be quandles, and $\phi: Q_1 \to {\rm Conj}_{-1} ({\rm Aut}(Q_2))$, $\psi: Q_2 \to {\rm Conj}_{-1} ({\rm Aut} (Q_1))$ be homomorphisms with $\phi(x)=\phi_x$ for $x\in Q_1$, $\psi(y)=\psi_y$ for $y\in Q_2$.
\begin{enumerate}
\item If $\psi(Q_2)=\{f\}$ and $\phi_{f(x *_1y)}=\phi_{f(y)} \phi_x \phi_y^{-1}$ for all $x, y \in X_1$, then $(X_1 \times X_2, \underline{*}, \overline{*})$ is a biquandle.
\item If $\phi(Q_1)=\{g\}$ and $\psi_{g(a *_2 b)}=\psi_{g(b)} \psi_a \psi_b^{-1}$ for all $a, b \in X_2$, then $(X_1 \times X_2, \underline{*}, \overline{*})$ is a biquandle.
\end{enumerate}
\end{theorem}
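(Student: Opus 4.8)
The plan is to derive the statement from Theorem~\ref{horvatMain} by exhibiting $(X_1\times X_2,\underline{*},\overline{*})$ as the biquandle associated to a biquandle structure on a suitable quandle. Let $Q_2^{-1}=(X_2,*_2^{-1})$ be the quandle dual to $Q_2$, where $a*_2^{-1}b=S_b^{-1}(a)$; it is again a quandle (a short computation using \eqref{innerrules}), hence so is the product quandle $P=Q_1\times Q_2^{-1}$ with $(x,a)*(y,b)=(x*_1y,\,a*_2^{-1}b)$. For $(y,b)\in X_1\times X_2$ put
\[
\beta_{(y,b)}\colon(x,a)\longmapsto\bigl(\psi_b(x),\,\phi_y(a*_2b)\bigr).
\]
The proof then splits into three parts: (a) every $\beta_{(y,b)}$ is an automorphism of $P$; (b) the family $\{\beta_{(y,b)}\}$ is a biquandle structure on $P$; (c) the biquandle associated to this structure has operations \eqref{productrule1}--\eqref{productrule2}.

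Parts (a) and (c) are routine and use only that $\phi$ and $\psi$ take values in automorphism groups. For (a), bijectivity of $\beta_{(y,b)}$ is clear, compatibility with $*$ is immediate on the first coordinate since $\psi_b\in{\rm Aut}(Q_1)$, and on the second coordinate it reduces, after extracting the automorphism $\phi_y$ (which is also an automorphism of $Q_2^{-1}$), to the identity $(a*_2^{-1}a')*_2b=(a*_2b)*_2^{-1}(a'*_2b)$, a direct consequence of \eqref{innerrules}. For (c) one substitutes into Theorem~\ref{horvatMain}: using $(a*_2^{-1}b)*_2b=a$ one gets $\beta_{(y,b)}\bigl((x,a)*(y,b)\bigr)=\bigl(\psi_b(x*_1y),\phi_y(a)\bigr)$, which is \eqref{productrule1}, while $\beta_{(y,b)}(x,a)=\bigl(\psi_b(x),\phi_y(a*_2b)\bigr)$ is \eqref{productrule2}.

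The heart of the matter is part~(b), and this is where the two cases split. Since $b*_2b=b$ we have $\beta_{(y,b)}(y,b)=\bigl(\psi_b(y),\phi_y(b)\bigr)$; in case~(1), where $\psi_b=f$ for all $b$, the map $(y,b)\mapsto(f(y),\phi_y(b))$ is a bijection of $X_1\times X_2$ (recover $y$ from the first coordinate, then $b$ from the second), and in case~(2), where $\phi_y=g$ for all $y$, the map $(y,b)\mapsto(\psi_b(y),g(b))$ is a bijection by the mirror argument; so the bijectivity axiom holds. For the remaining axiom $\beta_{\beta_{(y,b)}((x,a)*(y,b))}\beta_{(y,b)}=\beta_{\beta_{(x,a)}(y,b)}\beta_{(x,a)}$ I would evaluate both composites on a general $(z_1,z_2)$ and compare coordinates; the subtle point is the bookkeeping, since after the inner $\beta$ the outer one is indexed by a pair one of whose entries is $\phi_{y_1}(x_2)$ or $\phi_{x_1}(y_2*_2x_2)$ (respectively, the corresponding $\psi$-expression), and one must keep track of these indices and observe that the simplifications bringing the two sides together are precisely \eqref{innerrules} and the constancy hypothesis. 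In case~(1) the first coordinates come out $f^2(z_1)$ on both sides, and the second coordinates become $\phi_{f(x_1*_1y_1)}\phi_{y_1}(w)$ and $\phi_{f(y_1)}\phi_{x_1}(w)$ with $w=(z_2*_2y_2)*_2x_2$ ranging over all of $X_2$; hence the axiom is equivalent to $\phi_{f(x_1*_1y_1)}\phi_{y_1}=\phi_{f(y_1)}\phi_{x_1}$, i.e., after right-multiplication by $\phi_{y_1}^{-1}$, exactly the hypothesis $\phi_{f(x*_1y)}=\phi_{f(y)}\phi_x\phi_y^{-1}$. In case~(2) the second coordinates agree automatically, both equal to $g^2\bigl((z_2*_2y_2)*_2x_2\bigr)$, and matching the first coordinates gives $\psi_{g(y_2*_2x_2)}\psi_{x_2}=\psi_{g(x_2)}\psi_{y_2}$, which is the hypothesis $\psi_{g(a*_2b)}=\psi_{g(b)}\psi_a\psi_b^{-1}$. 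I would carry out the argument in the order (a), (c), (b), proving case~(1) of (b) in full and deducing case~(2) from the symmetry exchanging $Q_1\leftrightarrow Q_2$, $\phi\leftrightarrow\psi$ and $*_i\leftrightarrow*_i^{-1}$.
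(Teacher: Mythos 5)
Your proposal is correct, and it takes a genuinely different route from the paper's. The paper proves Theorem~\ref{action-product-biquandle} by brute force: it checks the diagonal axiom, the bijectivity of the maps $\alpha_{(y,b)}$, $\beta_{(y,b)}$ and $S$, and then verifies the three exchange identities by long direct computation, invoking the hypothesis $\phi_{f(x*_1y)}=\phi_{f(y)}\phi_x\phi_y^{-1}$ (respectively its mirror) at the relevant steps. You instead factor everything through Theorem~\ref{horvatMain}: you exhibit the family $\beta_{(y,b)}(x,a)=\bigl(\psi_b(x),\phi_y(a*_2b)\bigr)$ as a biquandle structure on the product quandle $Q_1\times Q_2^{-1}$, which reduces the whole verification to the two structure axioms. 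Your bookkeeping is right: the structure axiom evaluated on $(z_1,z_2)$ reduces, via \eqref{innerrules} and axiom (r2), to $\phi_{f(x*_1y)}\phi_y=\phi_{f(y)}\phi_x$ in case (1) and to $\psi_{g(b*_2a)}\psi_a=\psi_{g(a)}\psi_b$ in case (2), i.e.\ exactly the stated hypotheses, and the translation back through $(a*_2^{-1}b)*_2b=a$ recovers \eqref{productrule1}--\eqref{productrule2}. This is shorter than the paper's computation and buys, for free, the identification $\mathcal{Q}(B)=Q_1\times Q_2^{-1}$, which the paper only records after the proof. One caveat: your closing plan to deduce case (2) from the symmetry $Q_1\leftrightarrow Q_2$, $\phi\leftrightarrow\psi$, $*_i\leftrightarrow *_i^{-1}$ is not literally available as stated: $\psi$ viewed on the dual quandle $Q_2^{-1}$ becomes a homomorphism into ${\rm Conj}_{+1}({\rm Aut}(Q_1))$ rather than ${\rm Conj}_{-1}({\rm Aut}(Q_1))$ (one would have to pass to $\psi_a^{-1}$), and the coordinate swap interchanges $\underline{*}$ and $\overline{*}$, so one would also need the lemma that $(X,\overline{*},\underline{*})$ is a biquandle whenever $(X,\underline{*},\overline{*})$ is --- true (the associated braiding of the swapped pair is the inverse solution of the Yang--Baxter equation), but not trivial and nowhere stated in the paper. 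Since you have already written down the case (2) identities explicitly, it is cleaner to verify that case directly, as your own computation in effect does, rather than by appeal to this symmetry.
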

\begin{proof}We will prove in details only the first assertion. The second assertion is similar. From the multiplication rules (\ref{productrule1}), (\ref{productrule2}) and the axiom (q1) in quandles $Q_1$, $Q_2$ for all $(x, a) \in X_1 \times X_2$ we have the equality
$$(x, a) \underline{*} (x, a)= \big(f(x), \phi_x(a) \big) =(x, a) \overline{*} (x, a),$$
and the first axiom of biquandles holds. 

For $(y,b)\in X_1\times X_2$ denote by $\alpha_{(y,b)}$ the map $\alpha_{(y,b)}:X_1\times X_2\to X_1\times X_2$ given by 
$$\alpha_{(y,b)}(x,a)=(x,a)\underline{*}(y,b)=(f(x*_1y),\phi_y(a)).$$
 Let us prove that $\alpha_{(y,b)}$ is bijective. If $\alpha_{(y, b)} (x_1, a_1) =\alpha_{(y, b)} (x_2, a_2)$
for some $(x_1,a_1), (x_2,a_2)$ from $X_1\times X_2$, then $f(x_1 *_1 y)=f(x_2 *_1 y)$ and $\phi_y(a_1)=\phi_y(a_2)$. By the injectivity of the maps $f  S_y$ and $\phi_y$ it is possible if and only if $(x_1, a_1)=(x_2, a_2)$, and therefore $\alpha_{(y,b)}$ is injective. For surjectivity, let $(z,c)$ be some element from $X_1 \times X_2$. Since $f  S_y$ and $\phi_y$ are surjective, there exist $(x, a) \in X_1 \times X_2$ such that $\alpha_{(y, b)} (x, a) = (z, c)$, hence $\alpha_{(y,b)}$ is surjective, and therefore is bijective. The fact that the maps $\beta_{(y, b)}$ defined by $\beta_{(y,b)}(x,a)=(x,a)\overline{*}(y,b)$ for $(x,a)\in X_1\times X_2$ is bijective can be proved similarly.

Consider the map $S: (X_1 \times X_2) \times (X_1 \times X_2) \to (X_1 \times X_2) \times (X_1 \times X_2)$ given by 
\begin{equation}\label{Sbijective}
S\big((x,a),(y,b)\big)=\big((y,b)\overline{*}(x,a),(x,a)\underline{*}(y,b)\big)=\big((f(y),\phi_x(b*_2a)),(f(x*_1y),\phi_y(a))\big),
\end{equation}
and let us prove that $S$ is bijective.  If
$S\big((x_1, a_1), (y_1, b_1) \big)=S\big(  (x_2, a_2), (y_2, b_2) \big)$ for some elements $x_1,x_2,y_1,y_2\in X_1$, $a_1,a_2,b_1,b_2\in X_2$, then from (\ref{Sbijective}) we have
\begin{align}
\notag f(y_1)&=f(y_2),&\phi_{x_1}(b_1 *_2 a_1)&=\phi_{x_2}(b_2 *_2 a_2),\\
\notag f(x_1 *_1 y_1)&= f(x_2 *_1 y_2),&\phi_{y_1}(a_1)&=\phi_{y_2}(a_2).
\end{align}
From the injectivity of maps $f$, $fS_{y_1}$, $fS_{y_2}$, $\phi_{y_1}$, $\phi_{y_2}$, $\phi_{x_1}S_{a_1}$, $\phi_{x_2}S_{a_2}$ follows that $(x_1, a_1)=(x_2, a_2)$ and $(y_1, b_1) =(y_2, b_2)$. Surjectivity of $S$ follows directly from equality (\ref{Sbijective}). So, the map $S$ is bijective, and the second axiom of biquandles holds. 

Finally, let us check the third axiom of biquandles. For $x,y,z\in X_1$ and $a,b,c\in X_2$, we have
\begin{align}
\notag \big( (x, a) \underline{*} (y, b) \big) \underline{*} \big( (z, c) \underline{*} (y, b) \big) &= \big( f(x *_1 y), \phi_y (a) \big) \underline{*} \big(f(z *_1 y), \phi_y (c) \big)\\
\notag &= \Big( f \big(f(x *_1 y) *_1 f(z *_1 y)\big), \phi_{f(z *_1 y)} \big(\phi_y(a)\big) \Big)\\
\notag&= \Big( f^2 \big((x *_1 z)*_1 y)\big), \phi_{f(z *_1 y)} \big(\phi_y(a)\big) \Big),~\text{by axiom (r2) in}~Q_1\\
\notag&= \Big( f^2 \big((x *_1 z)*_1 y)\big), \phi_{f(y)} \big(\phi_z(a)\big) \Big),~\textrm{since}~\phi_{f(z *_1y)}=\phi_{f(y)} \phi_z \phi_y^{-1}\\
\notag&=  \big(f(x*_1 z), \phi_z(a)\big ) \underline{*} \big( f(y), \phi_z(b *_2 c)\big ) \\
\notag&= \big( (x, a) \underline{*} (z, c) \big) \underline{*} \big( (y, b) \overline{*} (z, c) \big),\\
\notag\big( (x, a) \underline{*} (y, b) \big) \overline{*} \big( (z, c) \underline{*} (y, b) \big) &= \big( f(x *_1 y), \phi_y (a) \big) \overline{*} \big(f(z *_1 y), \phi_y (c) \big)\\
\notag&= \Big( f^2 (x *_1 y), \phi_{f(z *_1 y)} \big(\phi_y(a *_2 c)\big) \Big)\\
\notag&=  \Big( f^2 (x *_1 y),  \phi_{f(y)}\big(\phi_z(a *_2 c)\big) \Big),~\textrm{since}~\phi_{f(z *_1y)}=\phi_{f(y)} \phi_z \phi_y^{-1}\\
\notag&=  \big(f(x), \phi_z(a *_2 c)\big ) \underline{*} \big( f(y), \phi_z(b *_2 c)\big ) \\
\notag&= \big( (x, a) \overline{*} (z, c) \big) \underline{*} \big( (y, b) \overline{*} (z, c) \big),\\
\notag\big( (x, a) \overline{*} (y, b) \big) \overline{*} \big( (z, c) \overline{*} (y, b) \big) &= \big( f(x), \phi_y (a *_2 b) \big) \overline{*} \big(f(z), \phi_y (c *_2 b) \big)\\
\notag&= \Big( f^2(x), \phi_{f(z)} \big(\phi_y((a *_2 b) *_2 (c *_2 b))\big) \Big)\\
\notag&= \Big( f^2(x), \phi_{f(z)} \big(\phi_y((a *_2 c) *_2 b)\big) \Big),~\textrm{by axiom (r2) in}~Q_2\\
\notag&= \Big( f^2(x), \phi_{f(y *_1 z)} \big(\phi_z((a *_2 c) *_2 b)\big) \Big),~\textrm{since}~\phi_{f(y *_1z)}=\phi_{f(z)} \phi_y \phi_z^{-1}\\
\notag&=  \big(f(x), \phi_z(a *_2 c)\big ) \overline{*} \big( f(y *_1 z), \phi_z(b)\big )\\
\notag&= \big( (x, a) \overline{*} (z, c) \big) \overline{*} \big( (y, b) \underline{*} (z, c) \big),
\end{align}
i.~e. the third axiom of biquandles holds and the theorem is proved.
\end{proof}
The biquandle constructed in Theorem~\ref{action-product-biquandle} is denoted by $B(Q_1~{}_{\phi}\times{}_{\psi}~Q_2)$. If both $\phi$ and $\psi$ are trivial, i.~e. $\phi(Q_1)=\{id\}$, $\psi(Q_2)=\{id\}$, then the biquandle $B(Q_1~{}_{\phi}\times{}_{\psi}~Q_2)$ is the product biquandle introduced in \cite{Horvat, Kamada}. Note that the associated quandle $\mathcal{Q}(B(Q_1~{}_{\phi}\times{}_{\psi}~Q_2))$ of the biquandle $B(Q_1~{}_{\phi}\times{}_{\psi}~Q_2)$ has the operation 
 $$(x, a) * (y, b)= \big(x *_1y, a *_2^{-1} b\big),$$ 
for $(x,a), (y,b)\in X_1\times X_2$, which does not depend on $\phi,\psi$. Thus, if $Q_1=T_n$, $Q_2=T_m$ are trivial quandles, then $\mathcal{Q}(B(Q_1~{}_{\phi}\times{}_{\psi}~Q_2))=T_{mn}$ is the trivial quandle. So, using Theorem~\ref{action-product-biquandle} one can find more biquandle structures on trivial quandles for Problem~\ref{classificationTrivial}.

 If $\phi$ is the trivial homomorphism, i.~e. $\phi(Q_1)=\{id\}$, then condition (2) of Theorem~\ref{action-product-biquandle}  holds since $\psi:Q_2\to {\rm Conj}_{-1}({\rm Aut}(Q_1))$ is a homomorphism, and we obtain the following corollary. 
 
\begin{corollary}\label{one-constant-action}
Let $Q_1=(X_1, *_1)$, $Q_2=(X_2, *_2)$ be quandles, and $\psi: Q_2 \to {\rm Conj}_{-1} ({\rm Aut} (Q_1))$ be a quandle homomorphism. Then the set $X_1\times X_2$ with the operations
\begin{align}
\notag(x, a) \underline{*} (y, b)= \big(\psi_b(x*_1y), a \big)\\
\notag(x, a) \overline{*} (y, b)= \big(\psi_b(x), a *_2 b\big)
\end{align}
for $(x,a), (y,b)\in X_1\times X_2$ is a biquandle.
\end{corollary}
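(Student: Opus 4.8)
The plan is to obtain Corollary~\ref{one-constant-action} immediately from part (2) of Theorem~\ref{action-product-biquandle} by specialising $\phi$ to the trivial homomorphism. First I would take $\phi\colon Q_1 \to {\rm Conj}_{-1}({\rm Aut}(Q_2))$ with $\phi(Q_1)=\{\id\}$, so that $\phi_y=\id$ for every $y\in X_1$. Substituting $\phi_y=\id$ into the defining formulas \eqref{productrule1} and \eqref{productrule2}, the outer automorphism $\phi_y$ drops out and the two operations become
$$(x,a)\underline{*}(y,b)=\big(\psi_b(x*_1y),\,a\big),\qquad (x,a)\overline{*}(y,b)=\big(\psi_b(x),\,a*_2b\big),$$
which are exactly the operations in the statement of the corollary.

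Next I would check that the hypothesis of Theorem~\ref{action-product-biquandle}(2) is automatically met. With $\phi(Q_1)=\{g\}$ and $g=\id$, that hypothesis, namely $\psi_{g(a*_2b)}=\psi_{g(b)}\psi_a\psi_b^{-1}$ for all $a,b\in X_2$, reduces to the identity $\psi_{a*_2b}=\psi_b\psi_a\psi_b^{-1}$. Since by assumption $\psi\colon Q_2 \to {\rm Conj}_{-1}({\rm Aut}(Q_1))$ is a quandle homomorphism, and the quandle operation on ${\rm Conj}_{-1}({\rm Aut}(Q_1))$ is $h*k=khk^{-1}$ (the case $n=-1$ of Example~\ref{conjugationex}), applying $\psi$ to $a*_2b$ gives $\psi(a*_2b)=\psi(b)\psi(a)\psi(b)^{-1}=\psi_b\psi_a\psi_b^{-1}$, as required. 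Thus no extra condition on $Q_1$, $Q_2$ or $\psi$ is needed beyond $\psi$ being a homomorphism.

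Finally, applying Theorem~\ref{action-product-biquandle}(2) with this $\phi$ and the given $\psi$ yields that $(X_1\times X_2,\underline{*},\overline{*})$ with the displayed operations is a biquandle, which is the assertion. I do not expect any genuine obstacle here; the one point requiring attention is keeping track of the convention that the operation of ${\rm Conj}_{-1}$ is conjugation $k\mapsto khk^{-1}$ (rather than the opposite), since this is precisely what makes the homomorphism property of $\psi$ coincide with the conjugation-type identity demanded in Theorem~\ref{action-product-biquandle}(2). (Alternatively one could simply re-run the bijectivity and axiom checks from the proof of Theorem~\ref{action-product-biquandle} with $\phi_y=\id$ inserted throughout, but invoking the theorem is cleaner.)
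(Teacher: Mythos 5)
Your proposal is correct and is essentially the paper's own argument: the corollary is stated there as the specialisation of Theorem~\ref{action-product-biquandle}(2) to the trivial homomorphism $\phi$, with condition (2) holding automatically because $\psi$ is a quandle homomorphism into ${\rm Conj}_{-1}({\rm Aut}(Q_1))$. Your extra remark about the convention $h*k=khk^{-1}$ in ${\rm Conj}_{-1}$ is exactly the point that makes this reduction work.
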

The biquandle obtained in Corollary~\ref{one-constant-action} is denoted by $B(Q_1\times_{\psi}Q_2)$. This biquandle is a kind of semi-direct product of biquandles $\mathcal{B}(Q_1)$ and $\mathcal{B}(Q_2)$. 
\begin{problem}Give a general definition of a semidirect product of (bi)quandles.
\end{problem}

\begin{remark}\label{rem-holomorph}
Let $Q=(X, *)$ be a quandle, $P={\rm Conj}_{-1}({\rm Aut}(Q))$, and $\psi:P\to{\rm Conj}_{-1}({\rm Aut}(Q))$ be the identity homomorphism. Using Corollary~\ref{one-constant-action} we can define the biquandle $$B(Q\times_{\psi}P)=B(Q\times_{\psi}{\rm Conj}_{-1}({\rm Aut}(Q))).$$ This biquandle is defined on the set $Q\times {\rm Aut}(Q)$ and has the following operations
\begin{align}
\notag(x, f) \underline{*} (y, g)=\big(g(x*y), f\big),&&(x, f) \overline{*} (y, g)=\big(g(x), gfg^{-1} \big)
\end{align} 
for $x, y \in Q$, $f, g \in {\rm Aut}(Q)$. We call this biquandle the \textit{holomorph biquandle} of $Q$ and denote it by ${\rm Hol}(Q)$.
\end{remark}

\subsection{Lifting biquandle structures}\label{liftingconstruction} 

Let $p:\widetilde{Q}\to Q$ be a quandle homomorphism. In this section we investigate when biquandle structures of $Q$ under certain conditions can be lifted to biquandle structures of $\widetilde{Q}$.

Inspired by the classical covering space theory for topological spaces,  Eisermann \cite{Eisermann} introduced the notion of a quandle covering and used it to study knot quandles.
A quandle homomorphism $p:\widetilde{Q} \to  Q$ is called a \textit{quandle covering} if it is surjective and the equality $p(\tilde{x}) = p(\tilde{y})$ for $\tilde{x},\tilde{y}\in \widetilde{Q}$ implies the equality $S_{\tilde{x}}= S_{\tilde{y}}$. Equivalently, a surjective quandle homomorphism  $p:\widetilde{Q} \to  Q$  is a quandle covering if and only if the natural map $S: \widetilde{Q} \to  \Inn(\widetilde{Q})$ factors through $p$.
\begin{example}For any quandle $Q$, the natural map $S: Q \to {\rm Conj}_{-1}({\rm Inn}(Q))$ which maps $x\in Q$ to $S_x\in {\rm Inn}(Q)$ defines a quandle covering $Q\to S(Q)$. By the definition, $S(Q)$ is the smallest quandle covered by $Q$.
\end{example}
\begin{example} If $G,\widetilde{G}$ are groups, then a surjective homomorphism $p : \widetilde{G} \to  G$ gives a quandle covering ${\rm Conj}(\widetilde{G}) \to {\rm Conj}(G)$ if and only if ${\rm Ker}(p)$ is a central subgroup of $\widetilde{G}$.
\end{example}
\begin{example} If $G,\widetilde{G}$ are groups, then a surjective group homomorphism $p : \widetilde{G} \to  G$ gives a quandle covering ${\rm Core}(\widetilde{G}) \to {\rm Core}(G)$ if and only if ${\rm Ker}(p)$ is a central subgroup of $\widetilde{G}$ of exponent~$2$.
\end{example}
\begin{example}
Let $\widetilde{G}$, $G$ be groups with $\widetilde{\phi} \in {\rm Aut}(\widetilde{G})$, $\phi \in {\rm Aut}(G)$. Then a surjective group homomorphism $p : \widetilde{G} \to  G$ with $p \tilde{\phi} = \phi  p$ gives a quandle covering ${\rm Alex}(\widetilde{G} ,\widetilde{\phi}) \to {\rm Alex}(G,\phi)$ if and only if $\widetilde{\phi}$ acts trivially on ${\rm Ker}(p)$.
\end{example}

Analogous to the topological covering space theory, Eisermann \cite{Eisermann} proved a lifting criteria for quandle coverings by introducing the fundamental group of a quandle. Recall that for each quandle $Q$, there is a natural quandle homomorphism 
$$\eta: Q \to {\rm Conj}_{-1} ({\rm Adj}(Q))$$
 which sends an element of $Q$ to a corresponding generator of ${\rm Adj}(Q)$. This gives a unique group homomorphism 
$$\varepsilon: {\rm Adj}(Q) \to \mathbb{Z}$$ 
with $\varepsilon(\eta(Q)) = 1$. Denote by ${\rm Adj}(Q)^\circ$ the kernel of the map $\varepsilon$. If $Q$ is connected, then $\varepsilon$ is the abelianization map, and hence ${\rm Adj}(Q)^\circ$ is the commutator subgroup of ${\rm Adj}(Q)$.

Let $Q$ be a quandle, and $q$ be an element of $Q$. The pair $(Q,q)$ is called the \textit{pointed quandle}. By the definition, a homomorphism $f:(Q_1,q_1)\to (Q_2,q_2)$ between pointed quandles $(Q_1,q_1)$, $(Q_2,q_2)$ is a homomorphism $f:Q_1\to Q_2$ such that $f(q_1)=q_2$. For a quandle $Q$ and an element $q\in Q$ denote by $\pi_1(Q,q)$ the following group
$$\pi_1(Q,q)=\{g \in {\rm Adj}(Q)^\circ~|~g\cdot q =q\}.$$
The group $\pi_1(Q,q)$ is called the \textit{fundamental group} of $Q$ at the based point $q\in Q$. In \cite{Eisermann} it is shown that the map $(Q, q) \mapsto \pi_1(Q, q)$ is a covariant functor from the category of pointed quandles to the category of groups. In particular, a homomorphism $f: (Q_1 , q_1) \to (Q_2 , q_2)$ of pointed quandles induces a group homomorphism  $f_*: \pi_1(Q_1 , q_1) \to \pi_1(Q_2 , q_2)$.  Moreover, if $Q$ is connected, then the isomorphism class of the group $\pi_1(Q,q)$ is independent of the choice of a base point $q \in Q$. The following result is proved in \cite[Proposition 4.9 and Proposition 5.13]{Eisermann}

\begin{theorem}\label{lifting theorem}
 Let $p: \widetilde{Q}\to Q$ be a quandle covering,  $\tilde{q}\in \widetilde{Q}$, and $q=p(\tilde{q})$. Let $X$ be a connected quandle, $x\in X$, and $f: (X,x) \to (Q,q)$ be a quandle homomorphism. Then there exists a unique lifting $\tilde{f}: (X,x) \to (\widetilde{Q},\tilde{q})$  such that $f=p\tilde{f}$,  if and only if $f_* \big(\pi_1( X , x) \big) \subset p_* \big(\pi_1 (\widetilde{Q}, \tilde{q}) \big)$.
\end{theorem}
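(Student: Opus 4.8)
The plan is to establish the quandle analogue of the classical lifting criterion from covering space theory, essentially recovering Eisermann's argument, and I would separate the straightforward necessity direction from the substantive existence direction.

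For necessity, suppose a lifting $\tilde f\colon(X,x)\to(\widetilde Q,\tilde q)$ with $f=p\tilde f$ exists. Applying the functor $\pi_1$ (available by the discussion immediately preceding the theorem) gives $p_*\circ\tilde f_*=f_*$ on fundamental groups, whence $f_*(\pi_1(X,x))=p_*\bigl(\tilde f_*(\pi_1(X,x))\bigr)\subseteq p_*(\pi_1(\widetilde Q,\tilde q))$. Nothing more is needed here.

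For sufficiency, the observation I would build everything on is that a quandle covering lets one lift the action of $\Adj(Q)$ on $Q$ to an action on $\widetilde Q$. Indeed, since $S_{\tilde x}$ depends only on $p(\tilde x)$, the rule $q\mapsto\bar S_q:=S_{\tilde x}$ (for any preimage $\tilde x$ of $q$) is well defined, and $S_{\tilde x*\tilde y}=S_{\tilde y}S_{\tilde x}S_{\tilde y}^{-1}$ together with $p(\tilde x*\tilde y)=p(\tilde x)*p(\tilde y)$ forces $\bar S_{x*y}=\bar S_y\bar S_x\bar S_y^{-1}$; by the defining presentation of $\Adj(Q)$ this extends to a homomorphism $\rho\colon\Adj(Q)\to\Aut(\widetilde Q)$ with $\rho(\eta_Q(q))=\bar S_q$. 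I would then record two compatibilities: $\rho$ covers the $\Adj(Q)$-action on $Q$, i.e. $p(\rho(g)\cdot\tilde w)=g\cdot p(\tilde w)$, and $\rho\circ p_*=\xi_{\widetilde Q}$, the natural action of $\Adj(\widetilde Q)$ on $\widetilde Q$ — the latter being where the covering hypothesis genuinely enters, since it permits choosing $\tilde w$ itself as a preimage of $p(\tilde w)$.

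With this in hand I would define the lift: since $X$ is connected, $\Adj(X)$ acts transitively on $X$, so each $y\in X$ has the form $y=g\cdot x$ for some $g\in\Adj(X)$, and I would set $\tilde f(y):=\rho(f_*(g))\cdot\tilde q$. The main obstacle will be well-definedness. If $g_1\cdot x=g_2\cdot x$, then $g_1^{-1}g_2$ lies in the $\Adj(X)$-stabilizer of $x$, which is generated by $\pi_1(X,x)$ and $\eta_X(x)$; the $\eta_X(x)$-part contributes nothing because $\rho(\eta_Q(q))=S_{\tilde q}$ fixes $\tilde q$, and the hypothesis $f_*(\pi_1(X,x))\subseteq p_*(\pi_1(\widetilde Q,\tilde q))$, combined with $\rho\circ p_*=\xi_{\widetilde Q}$, forces $\rho(f_*(g_1^{-1}g_2))\cdot\tilde q=\tilde q$. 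Granting well-definedness, the remaining checks are routine and I would dispatch them quickly: $\tilde f(x)=\tilde q$ (take $g=1$); $p\tilde f=f$, since $f$ itself intertwines the $\Adj$-actions and $\rho$ covers the base action; $\tilde f$ is a quandle homomorphism, using $S_{\theta(\tilde z)}=\theta S_{\tilde z}\theta^{-1}$ for $\theta\in\Aut(\widetilde Q)$ together with $S_{g\cdot x}=\xi_X(g\,\eta_X(x)\,g^{-1})$; and uniqueness, because any lift must satisfy $\tilde f(g\cdot x)=\tilde f_*(g)\cdot\tilde q=\rho(f_*(g))\cdot\tilde q$ (using $\rho\circ p_*=\xi_{\widetilde Q}$ and $p_*\tilde f_*=f_*$), and hence coincides with the one just built, again by connectedness of $X$. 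One may alternatively just cite \cite[Proposition~4.9 and Proposition~5.13]{Eisermann}, but I prefer the route above since it isolates exactly why the covering condition is what the lifting requires.
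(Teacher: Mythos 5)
Your proposal is correct. Note, however, that the paper does not prove this statement at all: it is quoted verbatim from Eisermann and the ``proof'' in the paper is just the citation to \cite[Propositions 4.9 and 5.13]{Eisermann}. So what you have done is reconstruct Eisermann's argument rather than diverge from the paper, and your reconstruction is sound: the necessity direction is indeed immediate from functoriality of $\pi_1$; the covering condition is exactly what makes $\bar S_q:=S_{\tilde x}$ (any $\tilde x\in p^{-1}(q)$) well defined, and the relation $\bar S_{x*y}=\bar S_y\bar S_x\bar S_y^{-1}$, checked on preimages, gives the homomorphism $\rho:\Adj(Q)\to\Aut(\widetilde{Q})$ with the two compatibilities you state. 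The only two places a referee would ask you to write out a line are (i) the description of the $\Adj(X)$-stabilizer of $x$: since $\varepsilon(\eta_X(x))=1$ and $\eta_X(x)\cdot x=x$, any stabilizing $g$ factors as $(g\,\eta_X(x)^{-\varepsilon(g)})\,\eta_X(x)^{\varepsilon(g)}$ with the first factor in $\pi_1(X,x)$, which is exactly your claim; and (ii) the homomorphism property of $\tilde f$, which follows from $\eta_X(h\cdot x)=h\,\eta_X(x)\,h^{-1}$ in $\Adj(X)$ together with $\theta S_{\tilde q}\theta^{-1}=S_{\theta(\tilde q)}$ for $\theta=\rho(f_*(h))$, precisely the two identities you name. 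With those one-line verifications supplied, your argument is a complete and self-contained proof of the theorem, isolating (as you say) why the covering hypothesis is what permits lifting the $\Adj(Q)$-action, which is the heart of Eisermann's approach; the paper itself buys brevity by outsourcing all of this to the reference.
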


A quandle $Q$ is called \textit{simply connected} if it is connected and $\pi_1(Q,q) = \{1\}$. For example,  every dihedral quandle $\R_n$ of odd order is simply connected \cite[Example 1.17]{Eisermann}. Also, the knot quandle $Q_L$ of a long knot $L$ is simply connected \cite[Theorem 30]{Eisermann-2003}.
\begin{theorem}\label{lifting-biquandle-structure}
Let $p:\widetilde{Q} \to  Q$ be a quandle covering. If $\widetilde{Q}$ is simply connected, then every biquandle structure  $\{\beta_y~|~ y \in Q\}$  on $Q$ lifts to a biquandle structure $\{\alpha_{\tilde{y}}~|~ \tilde{y} \in \widetilde{Q}\}$ on $\widetilde{Q}$.
\end{theorem}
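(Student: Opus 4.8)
The plan is to construct the lifted family $\{\alpha_{\tilde y}\}$ by lifting, for each $\tilde y\in\widetilde Q$, the automorphism $\beta_{p(\tilde y)}$ of $Q$ to an automorphism of $\widetilde Q$, and then verifying that, for a coherent choice of lifts, the two defining conditions of a biquandle structure on $\widetilde Q$ descend under $p$ to the corresponding conditions for $\{\beta_y\}$ on $Q$ and hence hold. In other words, the statement is an instance of the principle that data encoded by quandle automorphisms can be transported along a simply connected covering via Eisermann's lifting criterion.

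The first step is an automorphism-lifting lemma: if $p:\widetilde Q\to Q$ is a quandle covering with $\widetilde Q$ simply connected, then every $\beta\in\Aut(Q)$ admits a lift $\tilde\beta\in\Aut(\widetilde Q)$, i.e.\ $p\tilde\beta=\beta p$, and $\tilde\beta$ is determined once the image of a single base point is prescribed. Existence and uniqueness come directly from Theorem~\ref{lifting theorem} applied with source the connected quandle $\widetilde Q$, base point $\tilde q_0$, and the quandle homomorphism $f=\beta\circ p$: since $\pi_1(\widetilde Q,\tilde q_0)=\{1\}$ the condition $f_*(\pi_1(\widetilde Q,\tilde q_0))\subset p_*(\pi_1(\widetilde Q,\tilde q_1))$ is vacuous, so for each $\tilde q_1\in p^{-1}(\beta(q_0))$ there is a unique lift $\tilde\beta$ as a quandle endomorphism. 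Bijectivity follows by lifting $\beta^{-1}$ with matched base points: the two composites $\widetilde{\beta^{-1}}\tilde\beta$ and $\tilde\beta\widetilde{\beta^{-1}}$ are lifts of $p$ fixing a base point, hence equal $\id_{\widetilde Q}$ by the uniqueness clause.

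Now fix $\tilde q_0$ with $q_0=p(\tilde q_0)$; for each $q\in Q$ choose a lift $\beta^\circ_q\in\Aut(\widetilde Q)$ of $\beta_q$ and set $\alpha_{\tilde y}:=\beta^\circ_{p(\tilde y)}$. With any such choice, condition (2) of a biquandle structure holds: the map $\tilde y\mapsto\alpha_{\tilde y}(\tilde y)$ lies over the bijection $y\mapsto\beta_y(y)$ of $Q$, and since each $\alpha_{\tilde y}$ is a bijection of $\widetilde Q$ permuting the fibres of $p$, a brief fibrewise argument promotes this to a bijection of $\widetilde Q$. For condition (1), note that $p(\alpha_{\tilde y}(\tilde x*\tilde y))=\beta_{p(\tilde y)}(p(\tilde x)*p(\tilde y))$, so both automorphisms $\alpha_{\alpha_{\tilde y}(\tilde x*\tilde y)}\alpha_{\tilde y}$ and $\alpha_{\alpha_{\tilde x}(\tilde y)}\alpha_{\tilde x}$ of $\widetilde Q$, after composition with $p$, equal the single map $\beta_{\beta_{p(\tilde x)}(p(\tilde y))}\beta_{p(\tilde x)}\circ p$ --- this is exactly condition (1) for $\{\beta_y\}$. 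Being lifts of the same quandle homomorphism, they agree by the uniqueness clause of Theorem~\ref{lifting theorem} as soon as they agree at $\tilde q_0$, which reduces (1) to the base-point identity $\beta^\circ_{\beta_y(x*y)}\!\bigl(\beta^\circ_y(\tilde q_0)\bigr)=\beta^\circ_{\beta_x(y)}\!\bigl(\beta^\circ_x(\tilde q_0)\bigr)$ for all $x,y\in Q$.

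This base-point identity is the main obstacle: it is not automatic and pins down how coherently the lifts $\beta^\circ_q$ must be chosen. I would handle it using that, $p$ being a covering, $\Adj(Q)$ acts on $\widetilde Q$ (the inner-automorphism map of $\widetilde Q$ factors through $p$, which lets $\Adj(Q)$ act) and this action is transitive since $\widetilde Q$ is connected. Writing $\beta^\circ_q(\tilde q_0)=g_q\cdot\tilde q_0$ with $g_q\in\Adj(Q)$ chosen so that $g_q\cdot q_0=\beta_q(q_0)$, and using the resulting equivariance $\beta^\circ_q(g\cdot\tilde v)=\Adj(\beta_q)(g)\cdot\beta^\circ_q(\tilde v)$, the identity above turns into an equality of two elements of $\widetilde Q$ whose images in $Q$ already coincide by condition (1) downstairs; the residual ambiguity is measured by the $\Adj(Q)$-stabiliser of $\tilde q_0$, and it is precisely here that $\pi_1(\widetilde Q,\tilde q_0)=\{1\}$ does the real work, forcing that stabiliser to behave well enough for the $g_q$ to be picked coherently. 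Once this is carried out, $\{\alpha_{\tilde y}\}$ satisfies both conditions and, by Theorem~\ref{horvatMain}, is the desired biquandle structure on $\widetilde Q$ lifting $\{\beta_y\}$.
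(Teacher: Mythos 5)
Your construction coincides with the paper's: lift each $\beta_y$ by applying Theorem~\ref{lifting theorem} to $\beta_y\circ p$ (the hypothesis $\pi_1(\widetilde{Q},\cdot)=\{1\}$ makes the lifting criterion vacuous), get invertibility by lifting $\beta_y^{-1}$ and using uniqueness of pointed lifts, set $\alpha_{\tilde y}=\beta^{\circ}_{p(\tilde y)}$, and check condition (2) fibrewise --- all of this matches the paper and is fine. The divergence is at condition (1). The paper finishes by computing that $\alpha_{\alpha_{\tilde y}(\tilde x*\tilde y)}\alpha_{\tilde y}$ and $\alpha_{\alpha_{\tilde x}(\tilde y)}\alpha_{\tilde x}$ are both lifts through $p$ of the single homomorphism $\beta_{\beta_x(y)}\beta_x\,p$ and then invoking uniqueness of the lift. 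You correctly observe that uniqueness for lifts out of the connected quandle $\widetilde{Q}$ only identifies two lifts once they agree at a point, and you reduce condition (1) to the base-point identity $\beta^{\circ}_{\beta_y(x*y)}\bigl(\beta^{\circ}_y(\tilde q_0)\bigr)=\beta^{\circ}_{\beta_x(y)}\bigl(\beta^{\circ}_x(\tilde q_0)\bigr)$. But precisely at this step --- which you yourself call the main obstacle --- the proposal stops being a proof: no choice of the elements $g_q$ is produced, no coherence condition on the lifts is formulated or verified, and the closing claim that $\pi_1(\widetilde{Q},\tilde q_0)=\{1\}$ "forces the stabiliser to behave well enough" is an aspiration, not an argument.

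The gap is genuine, not cosmetic. Each $\beta^{\circ}_q$ is well defined only up to composition with a deck transformation of $p$, and the ambiguity is governed by $\pi_1(Q,q_0)$ (the deck group of the simply connected covering), not by $\pi_1(\widetilde{Q},\tilde q_0)$: triviality of the latter is what gives existence and base-point uniqueness of the lifts, but it does not by itself make your base-point identity hold for an arbitrary family $\{\beta^{\circ}_q\}$. To see that the identity really constrains the choices, take the constant structure $\beta_y=\id$ on $Q$: then each $\beta^{\circ}_y$ is an arbitrary deck transformation $d_y$, and condition (1) upstairs becomes $d_{x*y}d_y=d_yd_x$ for all $x,y$, which fails for careless choices as soon as the covering has a nontrivial deck transformation. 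So a completed proof must either exhibit a coherent choice of the $\beta^{\circ}_q$ and verify the base-point identity, or argue (as the paper does, via its direct appeal to uniqueness of the lift after projecting both composites to $Q$) that the two composites can be identified; your proposal does neither, so the axiom that actually makes $\{\alpha_{\tilde y}\}$ a biquandle structure is left unproved.
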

\begin{proof}
Let $y$ be a fixed element of $Q$. Since $p$ is surjective, there exist $\tilde{y}, \tilde{z}  \in \widetilde{Q}$ such that $p(\tilde{y})=y$ and $p(\tilde{z})=\beta_y(y)$. Consider the maps  
\begin{align}
\notag \beta_yp&: (\widetilde{Q}, \tilde{y}) \to (Q, \beta_y(y))\\
\notag p&:(\widetilde{Q}, \tilde{z}) \to (Q, \beta_y(y))
\end{align} 
of pointed quandles. Since  $(\beta_y p)_* \big(\pi_1( \widetilde{Q} , \tilde{y}) \big) =\{1\}= p_* \big(\pi_1 (\widetilde{Q}, \tilde{z}) \big)$, by Theorem \ref{lifting theorem}, there exists a unique quandle homomorphism $\widetilde{\beta_y}: (\widetilde{Q}, \tilde{y}) \to  (\widetilde{Q}, \tilde{z})$ such that $\beta_yp=  p\widetilde{\beta_y}$. Similarly, for 
$$\beta_y^{-1} p: (\widetilde{Q},\tilde{z}) \to (Q, y)$$ 
there exists a unique homomorphism $\widetilde{\beta_y^{-1}}: (\widetilde{Q}, \tilde{z}) \to (\widetilde{Q}, \tilde{y})$ with $\beta_y^{-1} p= p \widetilde{\beta_y^{-1}}$. Thus, we have 
$$p (\widetilde{\beta_y^{-1}} \widetilde{\beta_y})=(\beta_y^{-1} p) \widetilde{\beta_y}= \beta_y^{-1} (\beta_yp)= p,$$
i.~e. $(\widetilde{\beta_y^{-1}} \widetilde{\beta_y})$ is a lift of $id$ (on $Q$) with respect to $p$. Since $id$ (on ${\widetilde{Q}}$) is also a lift of $id$ (on $Q$) with respect to $p$, it follows from the uniqueness of the lift that $\widetilde{\beta_y^{-1}} \widetilde{\beta_y} = id =\widetilde{\beta_y}  \widetilde{\beta_y^{-1}}$, and hence $\widetilde{\beta_y} \in {\rm Aut}(\widetilde{Q})$.

For each $\tilde{y} \in p^{-1}(y)$ denote by $\alpha_{\tilde{y}}=\widetilde{\beta_y}$ and let us prove that $\{\alpha_{\tilde{y}}~|~\tilde{y}\in\widetilde{Q}\}$ is a biquandle structure on $\widetilde{Q}$. By construction, we have 
\begin{equation}\label{lifting-condition}
\beta_y p=  p \alpha_{\tilde{y}}
\end{equation}
 for all $y \in Q$ and $\tilde{y} \in \widetilde{Q}$ with $p(\tilde{y})=y$. Let $\tilde{x} \neq \tilde{y}$ be elements of $\widetilde{Q}$. If $p(\tilde{x})=p(\tilde{y})$, then by definition, $\alpha_{\tilde{x}}=\alpha_{\tilde{y}}$, and hence $\alpha_{\tilde{x}}(\tilde{x}) \neq \alpha_{\tilde{y}}(\tilde{y})$. Suppose that $p(\tilde{x}) \neq p(\tilde{y})$. Set $x=p(\tilde{x})$ and $y=p(\tilde{y})$. Since the map $z \mapsto \beta_z(z)$ is injective, we have $\beta_x(x) \neq \beta_y(y)$. But, 
$$p \widetilde{\beta_x}(\tilde{x})= \beta_x p (\tilde{x})=\beta_x(x)\neq \beta_y(y)= \beta_y p (\tilde{y})=p \widetilde{\beta_y}(\tilde{y}).$$
This implies that $\widetilde{\beta_x}(\tilde{x}) \neq \widetilde{\beta_y}(\tilde{y})$, that is, $\alpha_{\tilde{x}}(\tilde{x}) \neq \alpha_{\tilde{y}}(\tilde{y})$. Hence, the map $\tilde{x} \mapsto \alpha_{\tilde{x}}(\tilde{x})$ is injective.

Let $\tilde{y} \in \widetilde{Q}$ and set $y=p(\tilde{y})\in Q$. Since the map $z \mapsto \beta_z(z)$ is surjective, there exists $x \in Q$ such that $\beta_x(x)=y$. Note that a lift $\widetilde{\beta_x}$ of $\beta_x$ induces a bijection between the fibers $p^{-1}(x)$ and $p^{-1}(y)$. Thus, for 
$\tilde{y} \in p^{-1}(y)$, there exists an element $\tilde{x} \in p^{-1}(x)$ such that $\alpha_{\tilde{x}}(\tilde{x})= \tilde{y}$. Hence, the map $\tilde{y}\mapsto\alpha_{\tilde{y}}(\tilde{y})$ is surjective, and therefore is bijective. In order to prove that $\{\alpha_{\tilde{y}}~|~\tilde{y}\in\widetilde{Q}\}$ is a biquandle structure on $\widetilde{Q}$ we need to prove that 
$$\alpha_{\alpha_{\tilde{y}}(\tilde{x}*\tilde{y})} \alpha_{\tilde{y}} = \alpha_{ \alpha_{\tilde{x}}(\tilde{y})}  \alpha_{\tilde{x}}$$
for all $\tilde{x},\tilde{y}\in\widetilde{Q}$. Let $\tilde{x}, \tilde{y} \in \widetilde{Q}$ and set $x=p(\tilde{x})$, $y=p(\tilde{y})$. Then, we have
\begin{align}
\notag p \big(\alpha_{\alpha_{\tilde{y}}(\tilde{x}*\tilde{y})}  \alpha_{\tilde{y}} \big) &= \big(p \alpha_{\alpha_{\tilde{y}}(\tilde{x}*\tilde{y})} \big)\alpha_{\tilde{y}}= \big(\beta_{ p \alpha_{\tilde{y}}(\tilde{x}*\tilde{y})} p \big) \alpha_{\tilde{y}} = \beta_{ \beta_y p(\tilde{x}*\tilde{y})} \big( p \alpha_{\tilde{y}} \big)= \beta_{ \beta_y (x*y)} \big( \beta_y p \big)\\
\notag&= \beta_{ \beta_x (y)} \big( \beta_x p \big),~\textrm{since}~\{\beta_y~|~y\in Q\}~\textrm{is a biquandle structure on}~Q\\
\notag&= \beta_{ \beta_x (y)} \big(p \alpha_{\tilde{x}} \big)= \big( \beta_{ \beta_x (y)} p  \big)\alpha_{\tilde{x}}= \big( \beta_{ \beta_x p (\tilde{y})} p  \big) \alpha_{\tilde{x}}= \big( \beta_{ p \alpha_{\tilde{x}} (\tilde{y})} p  \big) \alpha_{\tilde{x}}= \big( p \alpha_{\alpha_{\tilde{x}}(\tilde{y})}  \big) \alpha_{\tilde{x}}\\
\notag &= p \big(\alpha_{ \alpha_{\tilde{x}}(\tilde{y})} \alpha_{\tilde{x}} \big).
\end{align} 
By uniqueness of the lift with respect to the covering $p$, we have $\alpha_{\alpha_{\tilde{y}}(\tilde{x}*\tilde{y})} \alpha_{\tilde{y}} =\alpha_{ \alpha_{\tilde{x}}(\tilde{y})} \alpha_{\tilde{x}}$, which completes the proof.
\end{proof}
The following result describes connections between biquandles obtained from $Q,\widetilde{Q}$ using biquandle structures obtained in Theorem~\ref{lifting-biquandle-structure}.
\begin{proposition}
Let $\widetilde{Q}$ be a simply connected quandle, and $p:\widetilde{Q} \to  Q$ be a quandle covering. Let $A=\{\beta_y~|~ y \in Q\}$ be a biquandle structure on $Q$, and $\widetilde{A}=\{\alpha_{\tilde{y}}~|~\tilde{y}\in\widetilde{Q}\}$ be a biquandle structure on $\widetilde{Q}$ constructed in Theorem~\ref{lifting-biquandle-structure}. Let $B,\widetilde{B}$ be biquandles constructed from quandles $Q,\widetilde{Q}$ using biquandle structures $A,\widetilde{A}$, respectively. Then $p$ induces a biquandle homomorphism $p:\widetilde{B}\to B$.
\end{proposition}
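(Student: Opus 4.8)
The plan is to check directly that the set map $p\colon\widetilde Q\to Q$ commutes with both biquandle operations, the only substantive input being the intertwining relation between the two families of automorphisms that was established during the proof of Theorem~\ref{lifting-biquandle-structure}.

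First I would unwind what $B$ and $\widetilde B$ are. By Theorem~\ref{horvatMain}, $B$ is the biquandle on the underlying set of $Q$ with $x\underline{*}y=\beta_y(x*y)$ and $x\overline{*}y=\beta_y(x)$, while $\widetilde B$ is the biquandle on the underlying set of $\widetilde Q$ with $\tilde x\,\underline{*}\,\tilde y=\alpha_{\tilde y}(\tilde x*\tilde y)$ and $\tilde x\,\overline{*}\,\tilde y=\alpha_{\tilde y}(\tilde x)$. Since $p$ is already a map between the carriers of $\widetilde B$ and $B$, it remains to verify
\[
p(\tilde x\,\underline{*}\,\tilde y)=p(\tilde x)\,\underline{*}\,p(\tilde y),\qquad p(\tilde x\,\overline{*}\,\tilde y)=p(\tilde x)\,\overline{*}\,p(\tilde y)
\]
for all $\tilde x,\tilde y\in\widetilde Q$. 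For this I would recall two facts: $p$ is a quandle covering, hence a quandle homomorphism, so $p(\tilde x*\tilde y)=p(\tilde x)*p(\tilde y)$; and, from the construction of $\widetilde A$ (equation~\eqref{lifting-condition}), for every $\tilde y\in\widetilde Q$ with $p(\tilde y)=y$ one has $\beta_y\circ p=p\circ\alpha_{\tilde y}$, where $\alpha_{\tilde y}$ in fact depends only on $y=p(\tilde y)$.

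Then, fixing $\tilde x,\tilde y$ and writing $x=p(\tilde x)$, $y=p(\tilde y)$, the computation is immediate:
\[
p(\tilde x\,\overline{*}\,\tilde y)=p\big(\alpha_{\tilde y}(\tilde x)\big)=(\beta_y\circ p)(\tilde x)=\beta_y(x)=x\,\overline{*}\,y,
\]
\[
p(\tilde x\,\underline{*}\,\tilde y)=p\big(\alpha_{\tilde y}(\tilde x*\tilde y)\big)=(\beta_y\circ p)(\tilde x*\tilde y)=\beta_y\big(p(\tilde x*\tilde y)\big)=\beta_y(x*y)=x\,\underline{*}\,y,
\]
which is exactly what is required. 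I do not expect a genuine obstacle: once equation~\eqref{lifting-condition} is in hand the argument is pure bookkeeping. The only points deserving a line of care are that $\alpha_{\tilde y}$ was defined fibrewise (all points of $p^{-1}(y)$ receive the same automorphism $\widetilde{\beta_y}$), which is what makes the right-hand sides above well defined in terms of the images, and that a biquandle homomorphism need not be injective or surjective, so no further property of $p$ has to be checked.
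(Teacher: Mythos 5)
Your argument is correct and coincides with the paper's own proof: both unwind the operations of $B$ and $\widetilde{B}$ via Theorem~\ref{horvatMain} and then use the intertwining relation $\beta_y p = p\,\alpha_{\tilde{y}}$ from the construction in Theorem~\ref{lifting-biquandle-structure} together with the fact that $p$ is a quandle homomorphism to verify compatibility with $\underline{*}$ and $\overline{*}$. No issues to report.
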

\begin{proof}The operations on biquandles $\widetilde{B}, B$ are defined by the rules
\begin{align}
\notag\tilde{x} \underline{*} \tilde{y}= \alpha_{\tilde{y}}(\tilde{x} *\tilde{y}),&&\tilde{x} \overline{*} \tilde{y}= \alpha_{\tilde{y}}(\tilde{x}),&& x \underline{*} y= \beta_y(x *y),&&x \overline{*} y= \beta_y(x)
\end{align} 
for $\tilde{x}, \tilde{y} \in \widetilde{Q}$ and $x, y \in Q$. The equalities
\begin{align}
\notag p(\tilde{x} \underline{*} \tilde{y}) &= p\big(\alpha_{\tilde{y}}(\tilde{x} *\tilde{y})\big)= \beta_y p (\tilde{x} * \tilde{y})= \beta_y  (x* y)= x\underline{*} y = p(\tilde{x}) \underline{*} p(\tilde{y}),\\
\notag p(\tilde{x} \overline{*} \tilde{y}) &= p\big( \alpha_{\tilde{y}}(\tilde{x})\big)= \beta_y p (\tilde{x})= \beta_y  (x)= x\overline{*} y= p(\tilde{x}) \overline{*} p(\tilde{y})
\end{align}
prove that $p:\widetilde{B}\to B$ is a biquandle homomorphism.
\end{proof}
Let $X=\{x_1,\dots,x_n\}$, and $FQ_n$ be the free quandle on $X$. The quandle $FQ_n$ can be written as the disjoint union of orbits $$FQ_n={\rm Orb}(x_1)\sqcup {\rm Orb}(x_2)\sqcup \dots\sqcup {\rm Orb}(x_n)$$ There exists a natural homomorphism $p:FQ_n\to T_n$ from the free quandle $FQ_n$ to the trivial quandle $T_n=\{t_1,\dots,t_n\}$ defined by the rule $p(a)=t_i$ if and only if $a\in {\rm Orb}(x_i)$. Unfortunately, $T_n$ is not a simply connected quandle, and $p:FQ_n\to T_n$ is not a quandle covering, so, Theorem~\ref{lifting-biquandle-structure} does not give a way to lift biquandle structures from $T_n$ to $FQ_n$. However, due to importance of the free quandle $FQ_n$, it would be useful to find a procedure to lift biquandle structures from $T_n$ to $FQ_n$. Let us describe one such procedure.

Let $\{\beta_1=\beta_{t_1}, \beta_2=\beta_{t_2},\dots,\beta_n=\beta_{t_n}\}$ be a biquandle structure on $T_n$. Since ${\rm Aut}(T_n)=\Sigma_n$, the automorphisms $\beta_1,\beta_2,\dots,\beta_n$ are just permutations of $\{t_1,t_2,\dots,t_n\}$. Thus, we can think about these permutations as about permutations of $\{1,2,\dots,n\}$ and write $\beta_i(t_j)=t_{\beta_i(j)}$ for $i,j=1,2,\dots,n$. For $a\in FQ_n$ denote by $\alpha_a$ the automorphism of $FQ_n$ induced by the permutation of generators $\{x_1,x_2,\dots,x_n\}$ of the form: if $a\in {\rm Orb}(x_i)$, then $\alpha_a(x_j)=x_{\beta_i(j)}$ (so, in this situation we can write $\alpha_a=\beta_i$). The following proposition states that $\{\alpha_a~|~a\in FQ_n\}$ is a biquandle structure of $FQ_n$.
\begin{proposition}\label{lifting}
If $\{\beta_1,\dots,\beta_n\}$ is a biquandle structure on $T_n$, then $\{\alpha_a~|~a\in FQ_n\}$ is a biquandle structure of $FQ_n$.
\end{proposition}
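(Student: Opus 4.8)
The plan is to transfer both conditions defining a biquandle structure from $T_n$ to $FQ_n$ along the natural surjection $p\colon FQ_n\to T_n$, $p(a)=t_i\iff a\in{\rm Orb}(x_i)$, using that $FQ_n$ is free. As preliminary bookkeeping I would record two facts. First, $p$ is a quandle homomorphism: the ${\rm Inn}(FQ_n)$-orbits are precisely the fibres ${\rm Orb}(x_i)=p^{-1}(t_i)$, and $a*b=S_b(a)$ always lies in the orbit of $a$, so $p(a*b)=p(a)=p(a)*p(b)$ in $T_n$. Second, since $FQ_n$ is free on $\{x_1,\dots,x_n\}$, sending a permutation $\sigma\in\Sigma_n={\rm Aut}(T_n)$ to the unique endomorphism of $FQ_n$ with $x_j\mapsto x_{\sigma(j)}$ defines an injective group homomorphism $\iota\colon\Sigma_n\hookrightarrow{\rm Aut}(FQ_n)$; by construction $\alpha_a=\iota(\beta_{p(a)})$, so $\alpha_a$ depends only on $p(a)$, and since a permutation-induced automorphism carries ${\rm Orb}(x_j)$ onto ${\rm Orb}(x_{\sigma(j)})$ one gets the intertwining relation $p\,\alpha_a=\beta_{p(a)}\,p$ for all $a\in FQ_n$ (with $\beta_{p(a)}$ read in ${\rm Aut}(T_n)$ on the right).

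With this setup, the first biquandle-structure axiom $\alpha_{\alpha_b(a*b)}\,\alpha_b=\alpha_{\alpha_a(b)}\,\alpha_a$ reduces, on applying $\iota^{-1}$, to an identity in $\Sigma_n$. Using $p\,\alpha_b=\beta_{p(b)}\,p$, the homomorphism property of $p$, and $\alpha_c=\iota(\beta_{p(c)})$, the left side becomes $\iota\!\big(\beta_{\beta_{p(b)}(p(a)*p(b))}\,\beta_{p(b)}\big)$ and the right side $\iota\!\big(\beta_{\beta_{p(a)}(p(b))}\,\beta_{p(a)}\big)$. These are equal by the first axiom of the biquandle structure $\{\beta_1,\dots,\beta_n\}$ on $T_n$, applied with $x=p(a)$ and $y=p(b)$, and injectivity of $\iota$ finishes this step.

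For the second axiom I would show $a\mapsto\alpha_a(a)$ is a bijection of $FQ_n$. The identity $p(\alpha_a(a))=\beta_{p(a)}(p(a))$ shows $p$ intertwines this map with the bijection $t\mapsto\beta_t(t)$ of $T_n$, while on a fixed fibre ${\rm Orb}(x_i)$ the map is the restriction of the single automorphism $\iota(\beta_i)$ --- a bijection of ${\rm Orb}(x_i)$ onto ${\rm Orb}(x_{\beta_i(i)})=p^{-1}(\beta_{t_i}(t_i))$. Being a map over a bijection of the base that is bijective on every fibre, it is a bijection. Concretely, injectivity follows from injectivity of $t\mapsto\beta_t(t)$ together with injectivity of each $\beta_i$, and surjectivity by choosing, for a target orbit ${\rm Orb}(x_k)$, an index $i$ with $\beta_i(i)=k$ and then the $\iota(\beta_i)$-preimage of the given point, which lands in ${\rm Orb}(x_i)$ by the intertwining relation.

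I do not expect a genuine obstacle here: the whole argument is a translation through $p$ and $\iota$. The one point needing care is notational discipline --- keeping $\beta_i$ as an element of $\Sigma_n={\rm Aut}(T_n)$ separate from the induced automorphism $\iota(\beta_i)=\alpha_a$ of $FQ_n$ --- and, in the surjectivity step, checking that the preimage lands in the right orbit, which is exactly what $p\,\alpha_a=\beta_{p(a)}\,p$ guarantees.
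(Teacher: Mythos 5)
Your proof is correct and follows essentially the same route as the paper's: both axioms are reduced to the biquandle-structure axioms on $T_n$ using that $\alpha_a$ depends only on the orbit of $a$ and intertwines with the base via $p\,\alpha_a=\beta_{p(a)}\,p$. The only difference is cosmetic: where the paper proves surjectivity of $a\mapsto\alpha_a(a)$ by writing the target as an explicit word $x_i*^{\varepsilon_1}y_1*\dots*^{\varepsilon_m}y_m$ and constructing a preimage letter by letter, you take the $\iota(\beta_i)^{-1}$-image of the target directly, which is the same element packaged more cleanly.
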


\begin{proof} We need to check the following two conditions
\begin{enumerate}
\item $\alpha_{\alpha_y(x*y)}\alpha_y=\alpha_{\alpha_x(y)}\alpha_x$ for all $x,y\in FQ_n$,
\item the map $y\mapsto \alpha_y(y)$ is a bijection of $FQ_n$. 
\end{enumerate}
Let $x\in {\rm Orb}(x_i)$, $y\in {\rm Orb}(x_j)$, $\alpha_y(x)\in {\rm Orb}(x_r)$, and $\alpha_x(y)\in {\rm Orb}(x_s)$. From these four conditions and the definition of $\alpha_a$ (for $a\in FQ_n$) follows that $\beta_j(t_i)=t_r$, $\beta_i(t_j)=t_s$. Since $\{\beta_1=\beta_{t_1}, \beta_2=\beta_{t_2},\dots,\beta_n=\beta_{t_n}\}$ is a biquandle structure on $T_n$, we have the equality 
$$\beta_{\beta_{t_j}(t_i*t_j)}\beta_{t_j}=\beta_{\beta_{t_i}(t_j)}\beta_{t_i}$$
Since $t_i*t_j=t_i$, we can rewrite the last equality in the following form 
\begin{equation}\label{ontriv}
\beta_r\beta_j=\beta_s\beta_{i}
\end{equation}
Since $\alpha_y(x)\in {\rm Orb}(x_r)$, we have $\alpha_y(x*y)\in {\rm Orb}(x_r)$ and 
\begin{align}
\label{leftbqs}\alpha_{\alpha_y(x*y)}\alpha_y&=\alpha_{x_r}\alpha_{x_j}=\beta_r\beta_j,\\
\label{rightbqs}\alpha_{\alpha_x(y)}\alpha_x&=\alpha_{x_s}\alpha_{x_i}=\beta_{s}\beta_i.
\end{align}
From equalities (\ref{ontriv}), (\ref{leftbqs}), (\ref{rightbqs}) follows that the equality 
$$\alpha_{\alpha_y(x*y)}\alpha_y=\alpha_{\alpha_x(y)}\alpha_x$$
holds for all $x,y\in FQ_n$, and the first axiom of a biquandle structure holds for $\{\alpha_a~|~a\in FQ_n\}$. Let us check the second axiom of a biquandle structre, i.~e. that the map $x\mapsto \alpha_x(x)$ is bijective. Let us prove that this map is injective. Suppose that there exist $x,y\in FQ_n$ such that 
$$\alpha_x(x)=\alpha_y(y).$$
If ${\rm Orb}(x)={\rm Orb}(y)$, then $\alpha_x=\alpha_y$, and from equality $\alpha_x(x)=\alpha_y(y)$ follows that $x=y$. If ${\rm Orb}(x)\neq{\rm Orb}(y)$, then from the fact $\{\beta_1,\dots,\beta_n\}$ is a biquandle structure on $T_n$ and the definition of $\alpha_a$ (for $a\in FQ_n$) follows that ${\rm Orb}(\alpha_x(x))\neq {\rm Orb}(\alpha_y(y))$ and $\alpha_x(x)\neq\alpha_y(y)$. Thus, the map $x\mapsto \alpha_x(x)$ is injective. 

Let us prove that the map $x\mapsto \alpha_x(x)$ is surjective. Let $x\in {\rm Orb}(x_i)$, then 
$$x=x_i*^{\varepsilon_1}y_1*^{\varepsilon_2}y_2*^{\varepsilon_3}\dots*^{\varepsilon_m}y_m$$
for some $x_i,y_1,y_2,\dots,y_m\in \{x_1,\dots,x_n\}$, $\varepsilon_1,\varepsilon_2,\dots,\varepsilon_n\in\{\pm1\}$. Since $\{\beta_1,\dots,\beta_n\}$ is a biquandle structure on $T_n$ there exists $t_j$ such that $\beta_j(t_j)=t_i$. For $i=1,\dots,m$, denote by $z_i=\beta_j^{-1}(y_i)$ (it is clear that $z_i\in\{x_1,\dots,x_n\}$), and let 
$$b=x_j*^{\varepsilon_1}z_1*^{\varepsilon_2}z_2*^{\varepsilon_3}\dots*^{\varepsilon_m}z_m.$$
Then from the definition of $\alpha_a$ (for $a\in FQ_n$) follows that
\begin{align}
\notag\alpha_b(b)&=\alpha_b(x_j)*^{\varepsilon_1}\alpha_b(z_1)*^{\varepsilon_2}\alpha_b(z_2)*^{\varepsilon_3}\dots*^{\varepsilon_m}\alpha_b(z_m)\\
\notag&=\alpha_{x_j}(x_j)*^{\varepsilon_1}\alpha_{x_j}(z_1)*^{\varepsilon_2}\alpha_{x_j}(z_2)*^{\varepsilon_3}\dots*^{\varepsilon_m}\alpha_{x_j}(z_m)\\
\notag&=x_i*^{\varepsilon_1}y_1*^{\varepsilon_2}y_2*^{\varepsilon_3}\dots*^{\varepsilon_m}y_m=x,
\end{align}
i.~e. the map $x\mapsto\alpha_x(x)$ is surjective, and therefore is  bijective. Thus, we proved that $\{\beta_a~|~a\in FQ_n\}$ is a biquandle structure of $FQ_n$.
\end{proof}

From Proposition~\ref{lifting} follows that every biquandle structure on a trivial quandle $T_n$ can be lifted to a biquandle structure on the free quandle $FQ_n$, which gives additional motivation for studying Problem~\ref{classificationTrivial}. Furthermore, in \cite{Fenn} the question of giving an explicit model for a free biquandle is posed. We believe that a free biquandle should be obtainable using a biquandle structure on a free quandle.

\section{Automorphisms of biquandles}\label{sec-automorphisms}

Let $B=(X,\underline{*},\overline{*})$ be a biquandle, and $Q=\mathcal{Q}(B)=(X,*)$ be the associated quandle of $B$. Since the equality $x*y=(x\underline{*}y)\overline{*}^{-1}y$ holds for all $x,y\in X$, every automorphism of $B$ induces an automorphism of $Q$, and we have the inclusion ${\rm Aut}(B)\leq {\rm Aut}(Q)$. The following result proved in \cite[Theorem~4.1]{Horvat} gives more information about this inclusion. 
\begin{theorem}\label{biquandlestructureautomorphism}Let $B$ be a biquandle obtained from a quandle $Q$ using a biquandle structure $\{\beta_a~|~a\in Q\}\subset {\rm Aut}(Q)$. Then ${\rm Aut}(Q)\leq N_{{\rm Aut}(Q)}\{\beta_a~|~a\in Q\}$.
\end{theorem}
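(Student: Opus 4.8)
The plan is to establish the containment elementwise: I will fix an arbitrary $g \in \operatorname{Aut}(Q)$ and show that conjugation by $g$ reproduces the set $\{\beta_a : a \in Q\}$, which is exactly the assertion $g \in N_{\operatorname{Aut}(Q)}\{\beta_a\}$. Because each $g\beta_a g^{-1}$ is again a quandle automorphism, the problem reduces to producing, for every $a$, an index $b(a) \in Q$ with $g\beta_a g^{-1} = \beta_{b(a)}$, and to verifying that $a \mapsto b(a)$ is a bijection so that the full family is recovered.

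The first step is to test the natural candidate $g\beta_a g^{-1} = \beta_{g(a)}$ (equivalently $g\beta_a = \beta_{g(a)}\,g$). I would attempt to derive it from the two axioms of a biquandle structure, namely the compatibility relation $\beta_{\beta_y(x*y)}\beta_y=\beta_{\beta_x(y)}\beta_x$ and the bijectivity of $y \mapsto \beta_y(y)$, together with the automorphism identity $g(x*y)=g(x)*g(y)$, by applying $g$ to the compatibility relation and re-indexing through $g$'s preservation of $*$.

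The main obstacle is that this candidate identity is precisely the defining relation for membership in $\operatorname{Aut}(B)$: one checks that $\phi\beta_a\phi^{-1}=\beta_{\phi(a)}$ holds for all $a$ if and only if $\phi\in\operatorname{Aut}(B)$. Hence it cannot be forced for a general $g\in\operatorname{Aut}(Q)$, and in fact even the weaker requirement $g\beta_a g^{-1}\in\{\beta_b\}$ already fails: for the constant structure $\beta_a\equiv f$ the family is the singleton $\{f\}$, so being normalized by all of $\operatorname{Aut}(Q)$ would force $f$ to be central, which is false as soon as $\operatorname{Aut}(Q)$ is nonabelian and $f$ noncentral (for instance on $T_3$ with $f$ a transposition). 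The crux is therefore to pin down the extra hypothesis actually in force: I would check whether $\{\beta_a\}$ is forced to be a union of $\operatorname{Aut}(Q)$-conjugacy classes on the explicit structures introduced earlier (the constant family, $\beta_x=S_x^{-1}$, and the Takasaki example on $\mathbb{Z}_3^n$). If these tests defeat the containment, the correct reading is $\operatorname{Aut}(B)\le N_{\operatorname{Aut}(Q)}\{\beta_a\}$, which I would then prove via the conjugation law $\phi\beta_a\phi^{-1}=\beta_{\phi(a)}$ that characterizes $\operatorname{Aut}(B)$.
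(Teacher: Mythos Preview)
Your diagnosis is correct: the statement as printed in the paper is a typo. The inclusion $\operatorname{Aut}(Q)\le N_{\operatorname{Aut}(Q)}\{\beta_a\}$ would force the family $\{\beta_a\}$ to be stable under conjugation by \emph{every} quandle automorphism, and your counterexample (the constant structure $\beta_a\equiv f$ on $T_3$ with $f$ a transposition, so that $\{\beta_a\}=\{f\}$ while $\operatorname{Aut}(T_3)=\Sigma_3$ is nonabelian) kills it cleanly. The paper itself confirms that the intended statement is $\operatorname{Aut}(B)\le N_{\operatorname{Aut}(Q)}\{\beta_a\}$: in the proof of Proposition~\ref{biquandle-auto-in-lifting} the theorem is invoked precisely to conclude that an element $\varphi\in\operatorname{Aut}(B)$ lies in $N_{\operatorname{Aut}(Q)}\{\beta_y\}$, and the preceding sentence introducing the theorem says it ``gives more information about'' the inclusion $\operatorname{Aut}(B)\le\operatorname{Aut}(Q)$.

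For the corrected statement your argument is exactly the right one and matches the source (the paper does not prove the result but cites \cite[Theorem~4.1]{Horvat}). From $x\overline{*}y=\beta_y(x)$ one reads off that $\phi\in\operatorname{Aut}(B)$ forces $\phi\beta_a=\beta_{\phi(a)}\phi$ for all $a$, and since $\phi$ is a bijection of $Q$ this gives $\phi\{\beta_a\}\phi^{-1}=\{\beta_{\phi(a)}:a\in Q\}=\{\beta_b:b\in Q\}$. Your remark that this conjugation identity in fact \emph{characterizes} $\operatorname{Aut}(B)$ inside $\operatorname{Aut}(Q)$ is also correct (check the $\underline{*}$ compatibility using $x\underline{*}y=\beta_y(x*y)$ and $\phi\in\operatorname{Aut}(Q)$), and it is exactly the content of the cited result in Horvat.
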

If $B$ is a biquandle obtained from a quandle $Q$ using a constant biquandle structure, then the group of automorphism of $B$ is completely described in the following proposition proved in \cite[Corollary~4.2]{Horvat}.
\begin{proposition}\label{constantbiqstr}Let $Q$ be a quandle, and $B$ be a biquandle obtained from $Q$ using a constant biquandle structure $\{\beta_a=f~|~a\in Q\}$ for $f\in {\rm Aut}(Q)$. Then ${\rm Aut}(B)=C_{{\rm Aut}(Q)}(f)$.
\end{proposition}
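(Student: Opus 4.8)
The plan is to pin down ${\rm Aut}(B)$ inside ${\rm Aut}(Q)$ by testing the two biquandle axioms against the explicit operations of $B$, which by Theorem~\ref{horvatMain} are $x\underline{*}y=f(x*y)$ and $x\overline{*}y=f(x)$ for $x,y\in Q$. As recalled just before the statement, $x*y=(x\underline{*}y)\overline{*}^{-1}y$ is expressed purely through the biquandle operations, so every $\theta\in{\rm Aut}(B)$ is in particular a bijection of the underlying set preserving $*$; hence ${\rm Aut}(B)\leq{\rm Aut}(Q)$, and it only remains to determine which $\theta\in{\rm Aut}(Q)$ additionally preserve $\underline{*}$ and $\overline{*}$.

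First I would analyse the $\overline{*}$ axiom. For $\theta\in{\rm Aut}(Q)$ and all $x,y\in Q$ one has $\theta(x\overline{*}y)=\theta(f(x))$ and $\theta(x)\overline{*}\theta(y)=f(\theta(x))$, so $\theta$ respects $\overline{*}$ precisely when $\theta f=f\theta$ on all of $Q$, i.e.\ when $\theta\in C_{{\rm Aut}(Q)}(f)$. This already yields ${\rm Aut}(B)\subseteq C_{{\rm Aut}(Q)}(f)$; it is also the special case of Theorem~\ref{biquandlestructureautomorphism} in which the family $\{\beta_a\}$ is the singleton $\{f\}$, whose setwise normalizer in ${\rm Aut}(Q)$ is exactly the centralizer $C_{{\rm Aut}(Q)}(f)$.

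For the reverse inclusion I would take $\theta\in C_{{\rm Aut}(Q)}(f)$ and check that it is a biquandle automorphism. It is a bijection of the underlying set; the $\overline{*}$ computation above runs backwards to give $\theta(x\overline{*}y)=\theta(x)\overline{*}\theta(y)$; and, using that $\theta$ is a quandle automorphism and commutes with $f$, the key identity is
\[
\theta(x\underline{*}y)=\theta\bigl(f(x*y)\bigr)=f\bigl(\theta(x*y)\bigr)=f\bigl(\theta(x)*\theta(y)\bigr)=\theta(x)\underline{*}\theta(y).
\]
Hence $\theta\in{\rm Aut}(B)$, and combining the two inclusions gives ${\rm Aut}(B)=C_{{\rm Aut}(Q)}(f)$.

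There is no genuine obstacle in this argument; the only point worth stating carefully is the logical role of the two axioms. For a general (non-constant) biquandle structure the $\underline{*}$ axiom alone would only force $\theta f=f\theta$ on the image set $\{x*y\mid x,y\in Q\}$, which may be a proper subset of $Q$; it is the $\overline{*}$ axiom that upgrades this to commutation on all of $Q$. In the constant case the $\overline{*}$ axiom is thus the binding condition, the $\underline{*}$ axiom being automatically satisfied once $\theta$ centralises $f$, and the two inclusions match up to give the stated equality.
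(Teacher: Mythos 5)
Your proof is correct, and note that the paper does not reprove this proposition at all (it simply cites \cite[Corollary~4.2]{Horvat}); your direct verification --- $\Aut(B)\leq\Aut(Q)$ via $x*y=(x\underline{*}y)\overline{*}^{-1}y$, the $\overline{*}$ axiom forcing $\theta f=f\theta$, and the converse check for $\theta\in C_{\Aut(Q)}(f)$ --- is exactly the expected argument. One small inaccuracy in your closing aside: since $x*x=x$ in a quandle, the set $\{x*y\mid x,y\in Q\}$ is all of $Q$, so the $\underline{*}$ axiom alone (take $y=x$ in $\theta(f(x*y))=f(\theta(x)*\theta(y))$) already forces $\theta f=f\theta$ on all of $Q$; this does not affect your proof, which correctly extracts the commutation from the $\overline{*}$ axiom, but the claim that the constraint might only hold on a proper subset is not right.
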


Let us find automorphism groups of biquandles introduced in Proposition~\ref{gen-dihedral-biquandle}. The following result is a partial generalisation of \cite[Proposition~4.5]{Horvat}.

\begin{proposition}\label{auto-gen-dihedral-biquandle}
Let $G$ be an abelian group without $2$-torsion and $\phi$ be an automorphism of $G$. Then $C_{{\rm Aut}({\rm T}(G))}(\phi) \leq {\rm Aut}\big(B(G, \phi) \big)$.
\end{proposition}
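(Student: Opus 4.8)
The plan is to fix an arbitrary $\theta\in C_{{\rm Aut}({\rm T}(G))}(\phi)$ and verify directly that $\theta$ is an automorphism of the biquandle $B(G,\phi)$. First note that since $G$ is abelian, $\phi$ is automatically a central automorphism, so $B(G,\phi)$ from Proposition~\ref{gen-dihedral-biquandle}(1) is defined; writing $G$ additively, its operations are $x\underline{*}y=\phi(y)+y-x$ and $x\overline{*}y=\phi(x)$, while ${\rm T}(G)=\Core(G)$ has operation $x*y=2y-x$. An automorphism of $B(G,\phi)$ is a bijection $\theta\colon G\to G$ with $\theta(x\underline{*}y)=\theta(x)\underline{*}\theta(y)$ and $\theta(x\overline{*}y)=\theta(x)\overline{*}\theta(y)$ for all $x,y\in G$. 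So for our $\theta$ --- which is in particular a bijection of $G$, an automorphism of ${\rm T}(G)$, and commutes with $\phi$ --- I only need to establish these two identities.

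The one real ingredient is the lemma that, for $G$ abelian without $2$-torsion, every $\theta\in{\rm Aut}({\rm T}(G))$ is \emph{affine}, meaning $\theta(a+b-c)=\theta(a)+\theta(b)-\theta(c)$ for all $a,b,c\in G$. I would prove this starting from the defining identity $\theta(2y-x)=2\theta(y)-\theta(x)$. Taking $(y,x)=(a,a-b)$ gives $\theta(a+b)=2\theta(a)-\theta(a-b)$; taking $(y,x)=(b,b-a)$ and using $\theta(b-a)=2\theta(0)-\theta(a-b)$ (the case $(y,x)=(0,a-b)$) gives $\theta(a+b)=2\theta(b)-2\theta(0)+\theta(a-b)$. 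Adding these yields $2\theta(a+b)=2\theta(a)+2\theta(b)-2\theta(0)$, and since $G$ has no $2$-torsion we cancel the $2$ to get $\theta(a+b)=\theta(a)+\theta(b)-\theta(0)$; the general affine identity then follows by one further application together with $\theta(-c)=2\theta(0)-\theta(c)$. This is the one and only place the $2$-torsion-free hypothesis is used, and it is essential here, since without it $\theta$ need not be affine.

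Granting the lemma, both verifications are immediate. For $\overline{*}$ one has $\theta(x\overline{*}y)=\theta(\phi(x))=\phi(\theta(x))=\theta(x)\overline{*}\theta(y)$, using only $\theta\phi=\phi\theta$. For $\underline{*}$, affineness gives $\theta(x\underline{*}y)=\theta(\phi(y)+y-x)=\theta(\phi(y))+\theta(y)-\theta(x)$, and then $\theta(\phi(y))=\phi(\theta(y))$ rewrites this as $\phi(\theta(y))+\theta(y)-\theta(x)=\theta(x)\underline{*}\theta(y)$. As $\theta$ is a bijection of $G$, it follows that $\theta\in{\rm Aut}(B(G,\phi))$, and since $\theta$ was arbitrary, $C_{{\rm Aut}({\rm T}(G))}(\phi)\leq{\rm Aut}(B(G,\phi))$. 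Thus the affine lemma is the main (indeed essentially the only) obstacle; an alternative to the direct computation is to invoke the structural description ${\rm Aut}({\rm T}(G))=G\rtimes{\rm Aut}(G)$ for $G$ abelian without $2$-torsion (translations composed with group automorphisms), which makes affineness transparent, but the argument above is self-contained. The reverse inclusion is not claimed and should not be expected in general, so no further identities need checking.
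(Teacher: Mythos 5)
Your proposal is correct and takes essentially the same route as the paper: both arguments fix an arbitrary element of $C_{{\rm Aut}({\rm T}(G))}(\phi)$ and check the two biquandle identities directly, using commutation with $\phi$ together with the fact that any automorphism of ${\rm T}(G)$ is affine. The only difference is that the paper obtains affineness by citing the structure theorem ${\rm Aut}({\rm T}(G))=G\rtimes{\rm Aut}(G)$ from \cite{BDS}, whereas you derive it from the quandle identity $\theta(2y-x)=2\theta(y)-\theta(x)$ using the absence of $2$-torsion, an alternative you yourself point out.
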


\begin{proof} A direct check shows that the associated quandle $\mathcal{Q}(B(G,\phi))$ is the Takasaki quandle ${\rm T}(G)$. By \cite[Theorem 4.2(1)]{BDS}, ${\rm Aut}({\rm T}(G)) =G \rtimes {\rm Aut}(G)$, and for every automorphism $f$ of ${\rm T}(G)$ there exist $g \in G$, $\alpha \in {\rm Aut}(G)$ such that
$f(x)=g\alpha(x)$
for all $x\in G$. Let $f\in C_{{\rm Aut}({\rm T}(G))}(\phi)$ and $x, y \in B(G, \phi)$. Then
\begin{align}
\notag f(x \underline{*} y) &= g \alpha \big( \phi(y) \big) \alpha (x)^{-1} \alpha(y)= f \big( \phi(y) \big) \alpha (x)^{-1} \alpha(y) \\
\notag &= \phi\big(f(y)\big)\alpha(x)^{-1}g^{-1}g\alpha(y)= \phi\big(f(y)\big)f(x)^{-1} f(y)=f(x) \underline{*} f(y)
\end{align}
and
$f(x \overline{*} y) =f\big(\phi(x)\big)=\phi\big(f(x)\big)=f(x) \overline{*} f(y)$. Thus, $f$ is an automorphism of $B(G,\phi)$, and we have a map $C_{{\rm Aut}({\rm T}(G))}(\phi) \to {\rm Aut}\big( B(G, \phi) \big)$ given by $f \mapsto f$, which is clearly an embedding.
\end{proof}

The following result is an improvement of \cite[Corollary 4.3]{Horvat} for finite groups.

\begin{proposition}\label{auto-gen-alexander}
Let $G$ be a finite abelian group, and $\phi, \psi$ be two commuting automorphisms of $G$ such that $\psi^{-1}\phi$ is a fixed point free automorphism. Then ${\rm Aut}\big( {\rm A}_{\psi,\phi}(G)\big) ={\rm Fix}(\psi) \rtimes C_{{\rm Aut}(G)}( \phi, \psi)$, where ${\rm Fix}(\psi)$ is the group of fixed-points of $\psi$.
\end{proposition}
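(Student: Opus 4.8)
The plan is to first identify the associated quandle and use the known structure of its automorphism group, then cut down to those automorphisms that respect the biquandle operations. A direct check (as in Proposition~\ref{gen-dihedral-biquandle}(2)) shows that $\mathcal{Q}\big({\rm A}_{\psi,\phi}(G)\big)$ is the generalized Alexander quandle ${\rm Alex}(G,\theta)$ for $\theta = \psi^{-1}\phi$ (up to applying the automorphism $\psi$ termwise); more precisely the associated quandle operation works out to $x*y = \psi^{-1}\phi(xy^{-1})\,y$ after a short computation using $x\underline{*}^{-1}y = \alpha_y^{-1}(x)$ and $x\overline{*}^{-1}y=\psi^{-1}(x)$. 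Since $\theta = \psi^{-1}\phi$ is fixed-point free and $G$ is finite, this quandle is connected, and by the Bardakov--Dey--Singh description of automorphism groups of (generalized) Alexander quandles \cite{BDS} we get ${\rm Aut}\big({\rm Alex}(G,\theta)\big) = G \rtimes C_{{\rm Aut}(G)}(\theta)$, with every automorphism of the form $x \mapsto g\,\alpha(x)$ for $g\in G$, $\alpha\in{\rm Aut}(G)$ commuting with $\theta$. By the remark preceding Theorem~\ref{biquandlestructureautomorphism} we have the inclusion ${\rm Aut}\big({\rm A}_{\psi,\phi}(G)\big) \leq {\rm Aut}\big(\mathcal{Q}({\rm A}_{\psi,\phi}(G))\big)$, so every biquandle automorphism has this shape.

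Next I would write out precisely when such a map $f(x) = g\,\alpha(x)$ is a biquandle automorphism, i.e.\ when it commutes with both $\underline{*}$ and $\overline{*}$. The condition $f(x\overline{*}y) = f(x)\overline{*}f(y)$ reads $g\,\alpha(\psi(x)) = \psi(g\,\alpha(x)) = \psi(g)\,\psi(\alpha(x))$; since $G$ is abelian this forces $\alpha\psi = \psi\alpha$ together with $g = \psi(g)$, i.e.\ $g\in{\rm Fix}(\psi)$ and $\alpha$ centralizes $\psi$. Then I would impose $f(x\underline{*}y) = f(x)\underline{*}f(y)$, namely $g\,\alpha\big(\phi(xy^{-1})\psi(y)\big) = \phi\big(g\alpha(x)(g\alpha(y))^{-1}\big)\,\psi\big(g\alpha(y)\big)$; expanding both sides using abelianness, the $\alpha(x),\alpha(y)$ terms match iff $\alpha$ commutes with $\phi$ (given it already commutes with $\psi$), and the constant terms give $g\,\alpha(\psi(y))$ on the left versus $\psi(g)\,\alpha(\psi(y))\cdot\text{(contribution from }\phi\text{ of the }g\text{'s)}$ — but since $\phi(gg^{-1})=1$, the $g$'s cancel inside $\phi$, leaving only $g = \psi(g)$ again. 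So the full set of constraints is exactly $g\in{\rm Fix}(\psi)$ and $\alpha\in C_{{\rm Aut}(G)}(\phi,\psi)$, which identifies ${\rm Aut}\big({\rm A}_{\psi,\phi}(G)\big)$ with the subgroup ${\rm Fix}(\psi)\rtimes C_{{\rm Aut}(G)}(\phi,\psi)$ of $G\rtimes C_{{\rm Aut}(G)}(\theta)$, noting that $C_{{\rm Aut}(G)}(\phi,\psi) = C_{{\rm Aut}(G)}(\psi^{-1}\phi)\cap C_{{\rm Aut}(G)}(\psi)$ is contained in $C_{{\rm Aut}(G)}(\theta)$.

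The semidirect product structure is then inherited from $G\rtimes{\rm Aut}(G)$: ${\rm Fix}(\psi)$ is normalized by $C_{{\rm Aut}(G)}(\phi,\psi)$ because $\psi$-fixed elements are sent to $\psi$-fixed elements by any $\psi$-commuting automorphism, and the two subgroups intersect trivially, giving ${\rm Aut}\big({\rm A}_{\psi,\phi}(G)\big) = {\rm Fix}(\psi)\rtimes C_{{\rm Aut}(G)}(\phi,\psi)$ as claimed. The main obstacle I anticipate is the bookkeeping in the first bullet: correctly computing the associated quandle $\mathcal{Q}({\rm A}_{\psi,\phi}(G))$ and matching it with the right generalized Alexander quandle so that the cited formula ${\rm Aut} = G\rtimes C_{{\rm Aut}(G)}(\theta)$ from \cite{BDS} applies verbatim — in particular making sure the fixed-point-free hypothesis on $\psi^{-1}\phi$ is exactly what guarantees connectedness and hence the clean form of the automorphism group. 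Once the associated quandle is pinned down, everything else is the routine (if slightly lengthy) verification sketched above, heavily using that $G$ is abelian so that $\phi$ kills differences $g g^{-1}$.
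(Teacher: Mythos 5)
Your proposal is correct and takes essentially the same route as the paper: identify $\mathcal{Q}\big({\rm A}_{\psi,\phi}(G)\big)$ with ${\rm Alex}(G,\psi^{-1}\phi)$ (your explicit formula $x*y=\psi^{-1}\phi(xy^{-1})y$ is exactly this quandle, so the parenthetical ``up to $\psi$'' is unnecessary), quote the description ${\rm Aut}\big({\rm Alex}(G,\psi^{-1}\phi)\big)=G\rtimes C_{{\rm Aut}(G)}(\psi^{-1}\phi)$ from \cite{BDS} using that $\psi^{-1}\phi$ is fixed point free, and then cut down to those $x\mapsto g\alpha(x)$ compatible with $\underline{*}$ and $\overline{*}$, which forces $g\in{\rm Fix}(\psi)$ and $\alpha\in C_{{\rm Aut}(G)}(\phi,\psi)$. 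The only cosmetic difference is that the paper packages your final verification by observing that ${\rm A}_{\psi,\phi}(G)$ comes from the constant biquandle structure $\{\beta_x=\psi\}$ on ${\rm Alex}(G,\psi^{-1}\phi)$ and invoking Proposition~\ref{constantbiqstr} to get ${\rm Aut}(B)=C_{{\rm Aut}({\rm Alex}(G,\psi^{-1}\phi))}(\psi)$, whereas you check the two compatibility conditions by hand --- the same computation.
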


\begin{proof}Direct check shows that $\mathcal{Q}({\rm A}_{\psi,\phi})$ is the Alexander quandle ${\rm Alex}(G,\psi^{-1}\phi)$, and the biquandle  ${\rm A}_{\psi,\phi}$ can be obtained from ${\rm Alex}(G,\psi^{-1}\phi)$ using a constant biquandle structure $\{\beta_x=\psi~|~x\in G\}$. Since $\psi^{-1}\phi$ is a fixed point free automorphism, from \cite[Theorem 6.1]{BDS} follows that ${\rm Aut} \big({\rm Alex}(G, \psi^{-1}\phi ) \big) = G \rtimes C_{{\rm Aut}(G)}(\psi^{-1}\phi)$. Since ${\rm A}_{\psi,\phi}$ is obtained from ${\rm Alex}(G,\psi^{-1}\phi)$ using a constant biquandle structure $\{\beta_x=\psi~|~x\in G\}$ from Proposition~\ref{constantbiqstr} and direct calculations follow that
$${\rm Aut} \big({\rm A}_{\psi,\phi} \big) =C_{{\rm Aut} \big({\rm Alex}(G, \psi^{-1}\phi ) \big) }(\psi) =  {\rm Fix}(\psi) \rtimes C_{{\rm Aut}(G)}( \phi, \psi).$$
The statement is proved.
\end{proof}

In the remainder of this section we study automorphism groups of biquandles introduced in Section~\ref{newconstructionsnew} and their connections with automorphism groups of associated quandles.
\subsection{Automorphisms of union biquandles}At first, let us find the automorphism group of the union quandle $Q_1\sqcup Q_2$ for connected quandles $Q_1$, $Q_2$.
\begin{lemma}\label{unionaut}Let $Q_1=(X_1,*_1)$, $Q_2=(X_2,*_2)$ be connected quandles. 
\begin{enumerate}
\item If $Q_1\not\simeq Q_2$, then ${\rm Aut}(Q_1\sqcup Q_2)={\rm Aut}(Q_1)\times {\rm Aut}(Q_2)$.
\item If $\alpha:Q_1\to Q_2$ is an isomorphism, then ${\rm Aut}(Q_1\sqcup Q_2)=({\rm Aut}(Q_1)\times {\rm Aut}(Q_2))\rtimes_{\theta} \mathbb{Z}_2$, where $\theta(f_1,f_2)=(\alpha^{-1}f_2\alpha,\alpha f_1\alpha^{-1})$.
\end{enumerate}
\end{lemma}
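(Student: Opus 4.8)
The plan is to analyze how an automorphism $\Phi$ of the union quandle $Q = Q_1 \sqcup Q_2$ interacts with the orbit decomposition. Since $Q_1$ and $Q_2$ are connected, each of $X_1$ and $X_2$ is a single orbit under $\operatorname{Inn}(Q)$; because automorphisms permute orbits, $\Phi$ either preserves the partition $\{X_1, X_2\}$ or swaps the two pieces. In the first case, $\Phi|_{X_1}$ and $\Phi|_{X_2}$ are automorphisms of $Q_1$ and $Q_2$ respectively (the inter-piece operations are trivial in $Q_1 \sqcup Q_2$, so no further compatibility is forced), giving an element of $\operatorname{Aut}(Q_1) \times \operatorname{Aut}(Q_2)$; conversely, any such pair patches together to an automorphism of $Q$. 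This already proves item (1): if $Q_1 \not\simeq Q_2$, there is no quandle isomorphism $X_1 \to X_2$, so the orbit-swapping case is vacuous and $\operatorname{Aut}(Q_1 \sqcup Q_2) = \operatorname{Aut}(Q_1) \times \operatorname{Aut}(Q_2)$.

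For item (2), fix an isomorphism $\alpha : Q_1 \to Q_2$. First I would check that the map $\sigma$ defined by $\sigma|_{X_1} = \alpha$, $\sigma|_{X_2} = \alpha^{-1}$ is an automorphism of $Q$: this is immediate since $\sigma$ restricts to isomorphisms on each piece and there are no nontrivial mixed relations. Clearly $\sigma^2 = \operatorname{id}$, so $\langle \sigma \rangle \cong \mathbb{Z}_2$. Next, writing $N = \operatorname{Aut}(Q_1) \times \operatorname{Aut}(Q_2)$ for the (normal, by the orbit argument) subgroup of automorphisms preserving the partition, I would observe that every orbit-swapping automorphism $\Phi$ can be written as $\Phi = \Psi \sigma$ for some $\Psi \in N$, namely $\Psi = \Phi \sigma$; hence $\operatorname{Aut}(Q) = N \rtimes \langle \sigma \rangle$. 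Finally I would compute the conjugation action: for $(f_1, f_2) \in N$ thought of as the automorphism equal to $f_1$ on $X_1$ and $f_2$ on $X_2$, the composite $\sigma (f_1, f_2) \sigma$ restricted to $X_1$ is $\alpha^{-1} f_2 \alpha$ (a map $X_1 \to X_1$) and restricted to $X_2$ is $\alpha f_1 \alpha^{-1}$, which is exactly $\theta(f_1, f_2) = (\alpha^{-1} f_2 \alpha, \alpha f_1 \alpha^{-1})$.

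The routine verifications are the quandle-homomorphism checks for $\sigma$ and the gluing of pairs, which follow directly from the definition of $Q_1 \sqcup Q_2$ (where $x * y = x$ whenever $x, y$ lie in different pieces) and pose no difficulty. The one genuinely load-bearing point — and the place I would be most careful — is the claim that an automorphism of $Q$ must respect the partition $\{X_1, X_2\}$ up to a swap. This rests on connectedness: $\Phi$ sends $\operatorname{Inn}(Q)$-orbits to $\operatorname{Inn}(Q)$-orbits, and in $Q_1 \sqcup Q_2$ the orbits are precisely $X_1$ and $X_2$ because each $Q_i$ is connected and the mixed operations, being trivial, do not merge the two orbits. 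I would make sure to state explicitly why the orbits of $Q_1 \sqcup Q_2$ are exactly $X_1$ and $X_2$ (namely $S_y(x) = x$ for $x, y$ in different pieces, so $\operatorname{Inn}(Q_1 \sqcup Q_2)$ acts on each $X_i$ through $\operatorname{Inn}(Q_i)$). Everything else is bookkeeping with the semidirect product.
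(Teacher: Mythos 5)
Your proposal is correct and takes essentially the same route as the paper: connectedness of $Q_1,Q_2$ together with the triviality of the mixed operations forces every automorphism to preserve or swap the two pieces, after which the identification of the piece-preserving subgroup with ${\rm Aut}(Q_1)\times{\rm Aut}(Q_2)$, the semidirect-product decomposition via $\sigma$ (the paper's $\iota$), and the computation of $\theta$ coincide with the paper's argument. The only difference is in packaging: you invoke the general fact that automorphisms permute ${\rm Inn}$-orbits and note that the orbits of $Q_1\sqcup Q_2$ are exactly $X_1$ and $X_2$, whereas the paper verifies block-preservation by hand (writing elements as iterated products and arguing surjectivity separately); your formulation absorbs that surjectivity step automatically.
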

\begin{proof} If $f_1\in {\rm Aut}(Q_1)$, $f_2\in {\rm Aut}(Q_2)$, then it is clear that the map $f:Q_1\sqcup Q_2\to Q_1\sqcup Q_2$ given by $f(x)=f_i(x)$ for $x\in Q_i~(i=1,2)$ is an automorphism of $Q$. Denote this automorphism by $f=(f_1,f_2)$. The set of all such automorphisms of $Q_1\sqcup Q_2$ forms a subgroup in ${\rm Aut}(Q_1\sqcup Q_2)$ which is isomorphic to ${\rm Aut}(Q_1)\times {\rm Aut}(Q_2)$. Further, an automorphism $f\in {\rm Aut}(Q_1\sqcup Q_2)$ belongs to ${\rm Aut}(Q_1)\times {\rm Aut}(Q_2)$ if and only if $f(Q_1)=Q_1$, $f(Q_2)=Q_2$.

(1) Let $Q_1\not\simeq Q_2$. Let us prove that in this case every automorphism $f$ of $Q_1\sqcup Q_2$ satisfies $f(Q_1)=Q_1$, $f(Q_2)=Q_2$.
By contrary, suppose that there exists an element $x\in Q_1$ such that $f(x)\in Q_2$. Since $Q_1$ is connected, for each $y\in Q_1$ there exist elements $x_1,\dots,x_n$ and integers $\varepsilon_1,\dots,\varepsilon_n\in\{\pm1\}$ such that $y=x*^{\varepsilon_1}x_1*^{\varepsilon_2}x_2*^{\varepsilon_3}\dots*^{\varepsilon_n}x_n$. Thus,
$$f(y)=f(x)*^{\varepsilon_1}f(x_1)*^{\varepsilon_2}f(x_2)*^{\varepsilon_3}\dots*^{\varepsilon_n}f(x_n)$$
belongs to $Q_2$, and hence $f(Q_1)\subset Q_2$. Since $f\in {\rm Aut}(Q_1\sqcup Q_2)$, the induced map $f:Q_1\to Q_2$ is an injective homomorphism. Let us prove that $f:Q_1\to Q_2$ is surjective.

By contrary, suppose that there exists an element $b\in Q_2$ such that $f(a)\neq b$ for all $a\in Q_1$. Since $f\in {\rm Aut}(Q_1\sqcup Q_2)$, there exists $c\in Q_2$ such that $f(c)=b$. Since $Q_2$ is connected, $f(Q_2)\subset Q_2$. Thus, $f(Q_1\sqcup Q_2)\subset Q_2$ which contradicts  the fact that  $f\in {\rm Aut}(Q_1\sqcup Q_2)$. Thus, $f:Q_1\to Q_2$ is a surjective (and therefore bijective) homomorphism. Thus, $Q_1\simeq Q_2$ which contradicts the given hypothesis.

(2) Let $\alpha:Q_1\to Q_2$ be an isomorphism. Denote by $\iota:Q_1\sqcup Q_2\to Q_1\sqcup Q_2$ the map given by
\begin{align}\label{iota}
\iota(x)=\begin{cases}
\alpha(x),& x\in Q_1,\\
\alpha^{-1}(x),&x\in Q_2.
\end{cases}
\end{align}
It is clear that $\iota$ is an automorphism of $Q_1\sqcup Q_2$ of order $2$. Let us prove that the group ${\rm Aut}(Q_1\sqcup Q_2)$ is generated by ${\rm Aut}(Q_1)\times {\rm Aut}(Q_2)$ and automorphism $\iota$. Let $f\in {\rm Aut}(Q_1\sqcup Q_2)$, and let $x$ be an element of $Q_1$. If $f(x)\in Q_1$, then similar to case (1) we have $f(Q_1)=Q_1$, $f(Q_2)=Q_2$, and therefore $f\in {\rm Aut}(Q_1)\times {\rm Aut}(Q_2)$. If $f(x)\in Q_2$, then similar to case (1) we have $f(Q_1)=Q_2$, $f(Q_2)=Q_1$. Thus, for the automorphism $g=\iota f$, we have $g(x)=\iota f(x)=\alpha^{-1}(f(x))\in Q_1$. Hence, $g(Q_1)=Q_1$, $g(Q_2)=Q_2$, i.~e. $g\in {\rm Aut}(Q_1)\times {\rm Aut}(Q_2)$, and $f=\iota g\in\langle{\rm Aut}(Q_1)\times {\rm Aut}(Q_2),\iota\rangle$. Hence, we proved that the group ${\rm Aut}(Q_1\sqcup Q_2)$ is generated by ${\rm Aut}(Q_1)\times {\rm Aut}(Q_2)$ and automorphism $\iota$. Let $f_1\in {\rm Aut}(Q_1)$, $f_2\in {\rm Aut}(Q_2)$. The equality
\begin{align}
\notag\iota(f_1,f_2)\iota(x)&=\begin{cases}
\iota(f_1,f_2)(\alpha(x)),&x\in Q_1\\
\iota(f_1,f_2)(\alpha^{-1}(x)),&x\in Q_2
\end{cases}\\
\notag&=\begin{cases}
\iota(f_2(\alpha(x))),&x\in Q_1\\
\iota(f_1(\alpha^{-1}(x))),&x\in Q_2
\end{cases}\\
\notag&=\begin{cases}
\alpha^{-1}f_2\alpha(x),&x\in Q_1\\
\alpha f_1\alpha^{-1}(x),&x\in Q_2
\end{cases}
\end{align}
implies that ${\rm Aut}(Q_1\sqcup Q_2)=({\rm Aut}(Q_1)\times {\rm Aut}(Q_2))\rtimes_{\theta} \mathbb{Z}_2$, with the action described in the formulation of the lemma.
\end{proof}
Let $Q_1=(X_1,*_1)$, $Q_2=(X_2,*_2)$ be quandles, $f_1\in {\rm Aut}(Q_1)$, and $f_2\in {\rm Aut}(Q_2)$. Recall that the biquandle $B(Q_1~{}_{f_2}\bigsqcup{}_{f_1}~Q_2)$ is a biquandle on the set $X_1\sqcup X_2$ which has the following operations:
\begin{align}
\notag a,b\in Q_1 &\Rightarrow a\overline{*}b=a, a\underline{*}b=a*_1b,\\
\notag a,b\in Q_2 &\Rightarrow a\overline{*}b=a, a\underline{*}b=a*_2b,\\
\notag a\in Q_1, b\in Q_2 &\Rightarrow a\overline{*}b=f_1(a), a\underline{*}b=f_1(a),\\
\notag a\in Q_2, b\in Q_1 &\Rightarrow a\overline{*}b=f_2(a), a\underline{*}b=f_2(a).
\end{align} 
(see Section~\ref{sunion}). We are ready to prove the following.

\begin{theorem}\label{auto-union-biquandle}
Let $Q_1=(X_1,*_1)$, $Q_2=(X_2,*_2)$ be connected quandles, $f_1\in {\rm Aut}(Q_1)$, and $f_2\in {\rm Aut}(Q_2)$.
\begin{enumerate}
\item If $Q_1\not\simeq Q_2$, then ${\rm Aut}\left(B(Q_1~{}_{f_2}\bigsqcup{}_{f_1}~Q_2)\right)=C_{{\rm Aut}(Q_1)}(f_1)\times C_{{\rm Aut}(Q_2)}(f_2)$. 
\item If $\alpha:Q_1\to Q_2$ is an isomorphism, and $f_1,\alpha^{-1} f_2\alpha$ are not conjugate in ${\rm Aut}(Q_1)$, then ${\rm Aut}(B(Q_1~{}_{f_2}\bigsqcup{}_{f_1}~Q_2))=C_{{\rm Aut}(Q_1)}(f_1)\times C_{{\rm Aut}(Q_2)}(f_2)$
\item If $\alpha:Q_1\to Q_2$ is an isomorphism, and $\alpha^{-1} f_2\alpha=\psi^{-1}f_1\psi$ for $\psi\in{\rm Aut}(Q_1)$, then ${\rm Aut}(B(Q_1~{}_{f_2}\bigsqcup{}_{f_1}~Q_2))=(C_{{\rm Aut}(Q_1)}(f_1)\times C_{{\rm Aut}(Q_2)}(f_2))\rtimes_{\rho}\mathbb{Z}_2$, where the action is given by $\rho(\varphi_1,\varphi_2)=((\alpha\psi^{-1})^{-1}\varphi_2(\alpha\psi^{-1}),(\alpha\psi^{-1}) \varphi_1(\alpha\psi^{-1})^{-1})$.
\end{enumerate}
\end{theorem}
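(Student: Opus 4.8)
The plan is to combine the description of $\Aut(B)$ in terms of the associated quandle with the computation of $\Aut(Q_1\sqcup Q_2)$ from Lemma~\ref{unionaut}. Write $B=B(Q_1~{}_{f_2}\bigsqcup{}_{f_1}~Q_2)$. By the construction in Section~\ref{sunion} we have $\mathcal Q(B)=Q_1\sqcup Q_2$, and the underlying biquandle structure is $\beta_y=\widetilde{f_2}$ for $y\in X_1$ and $\beta_y=\widetilde{f_1}$ for $y\in X_2$, where $\widetilde{f_1}$ acts as $f_1$ on $X_1$ and as the identity on $X_2$, while $\widetilde{f_2}$ acts as the identity on $X_1$ and as $f_2$ on $X_2$. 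As noted at the start of Section~\ref{sec-automorphisms}, $\Aut(B)\le\Aut(Q_1\sqcup Q_2)$; and since $\underline*$ is recovered from $*$ and $\overline*$, an automorphism $\varphi$ of $Q_1\sqcup Q_2$ lies in $\Aut(B)$ if and only if $\varphi\beta_y\varphi^{-1}=\beta_{\varphi(y)}$ for all $y\in X_1\sqcup X_2$ (indeed, commuting with $\overline *$ then forces commuting with $\underline*$).

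First I would treat the automorphisms that preserve the two blocks. If $\varphi=(\varphi_1,\varphi_2)\in\Aut(Q_1)\times\Aut(Q_2)$, then $\beta_{\varphi(y)}=\beta_y$ for all $y$, and the conditions $\varphi\widetilde{f_2}\varphi^{-1}=\widetilde{f_2}$ and $\varphi\widetilde{f_1}\varphi^{-1}=\widetilde{f_1}$ are together equivalent to $\varphi_1\in C_{\Aut(Q_1)}(f_1)$ and $\varphi_2\in C_{\Aut(Q_2)}(f_2)$, because $\widetilde{f_2}$ and $\widetilde{f_1}$ are nontrivial only on $X_2$ and $X_1$ respectively. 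Thus $H:=C_{\Aut(Q_1)}(f_1)\times C_{\Aut(Q_2)}(f_2)$ equals $\Aut(B)\cap\big(\Aut(Q_1)\times\Aut(Q_2)\big)$. When $Q_1\not\simeq Q_2$, Lemma~\ref{unionaut}(1) gives $\Aut(Q_1\sqcup Q_2)=\Aut(Q_1)\times\Aut(Q_2)$, so $\Aut(B)=H$, which is (1). When $Q_1\simeq Q_2$, Lemma~\ref{unionaut}(2) says the only remaining automorphisms interchange $X_1$ and $X_2$; for such a $\varphi$, writing $g=\varphi|_{X_1}\colon Q_1\to Q_2$ and $h=\varphi|_{X_2}\colon Q_2\to Q_1$, evaluating $\varphi\beta_y\varphi^{-1}=\beta_{\varphi(y)}$ on each block (taking $y\in X_2$, resp. $y\in X_1$) turns the condition into $gf_1g^{-1}=f_2$ and $hf_2h^{-1}=f_1$. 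In particular $\alpha^{-1}g\in\Aut(Q_1)$ conjugates $f_1$ to $\alpha^{-1}f_2\alpha$, so if $f_1$ and $\alpha^{-1}f_2\alpha$ are not conjugate in $\Aut(Q_1)$ there is no such $\varphi$ in $\Aut(B)$ and $\Aut(B)=H$, which is (2).

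For (3) I would exhibit an explicit splitting involution. With $\psi$ as in the hypothesis, put $g:=\alpha\psi^{-1}\colon Q_1\to Q_2$; the relation $\alpha^{-1}f_2\alpha=\psi^{-1}f_1\psi$ rearranges to $g^{-1}f_2g=f_1$, i.e. $gf_1g^{-1}=f_2$. Let $\tau_0\in\Aut(Q_1\sqcup Q_2)$ act as $g$ on $X_1$ and as $g^{-1}$ on $X_2$. Then $\tau_0^2=\id$, and with $h=g^{-1}$ the two relations above are both satisfied, so $\tau_0\in\Aut(B)$, while $\tau_0$ interchanges the blocks so $\tau_0\notin H$. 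Since $[\Aut(Q_1\sqcup Q_2):\Aut(Q_1)\times\Aut(Q_2)]=2$ and $\Aut(B)\cap\big(\Aut(Q_1)\times\Aut(Q_2)\big)=H$, we conclude $\Aut(B)=H\sqcup H\tau_0=H\rtimes\langle\tau_0\rangle$. Finally, conjugation by $\tau_0$ sends $(\varphi_1,\varphi_2)\in H$ to the automorphism acting as $g^{-1}\varphi_2g$ on $X_1$ and as $g\varphi_1g^{-1}$ on $X_2$, that is $\rho(\varphi_1,\varphi_2)=\big((\alpha\psi^{-1})^{-1}\varphi_2(\alpha\psi^{-1}),(\alpha\psi^{-1})\varphi_1(\alpha\psi^{-1})^{-1}\big)$; one checks this lands in $H$ using $gf_1=f_2g$ together with $\varphi_i\in C_{\Aut(Q_i)}(f_i)$. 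This is exactly the asserted semidirect product.

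The routine part is the block-by-block bookkeeping; the main point to get right is tracking, for each $y$, which block $\beta_y$ and $\beta_{\varphi(y)}$ act nontrivially on when $\varphi$ interchanges $X_1$ and $X_2$. The one genuine idea is that to split the extension one wants a block-swapping automorphism that is an involution, which forces $h=g^{-1}$ and collapses the two conjugacy relations into a single one; the hypothesis in (3) supplies precisely such a $g=\alpha\psi^{-1}$.
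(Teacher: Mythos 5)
Your proof is correct and follows essentially the same route as the paper: reduce to automorphisms of the associated quandle $Q_1\sqcup Q_2$ via Lemma~\ref{unionaut}, determine which of them respect the biquandle operations (centralizer conditions in the block-preserving case, conjugacy of $f_1$ and $\alpha^{-1}f_2\alpha$ obstructing the block-swapping case), and in case (3) adjoin the same swapping involution built from $\alpha\psi^{-1}$ and compute its conjugation action. Your uniform criterion $\varphi\beta_y\varphi^{-1}=\beta_{\varphi(y)}$ and the index-two argument merely streamline the paper's explicit case-by-case verifications.
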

\begin{proof}
(1) Let $Q_1\not\simeq Q_2$. If $\varphi_1\in C_{{\rm Aut}(Q_1)}(f_1)$, $\varphi_2\in C_{{\rm Aut}(Q_2)}(f_2)$, then a direct check shows that the map $\varphi=(\varphi_1,\varphi_2)$ given by $\varphi(x)=\varphi_i(x)$ for $x\in X_i$ is an automorphism of $B(Q_1~{}_{f_2}\bigsqcup{}_{f_1}~Q_2)$. The subgroup of ${\rm Aut}(B(Q_1~{}_{f_2}\bigsqcup{}_{f_1}~Q_2))$ generated by such automorphisms is isomorphic to $C_{{\rm Aut}(Q_1)}(f_1)\times C_{{\rm Aut}(Q_2)}(f_2)$. 

Let us prove that every automorphism $\varphi$ of $B(Q_1~{}_{f_2}\bigsqcup{}_{f_1}~Q_2)$ can be written as $\varphi=(\varphi_1,\varphi_2)$ for $\varphi_1\in C_{{\rm Aut}(Q_1)}(f_1)$, $\varphi_2\in C_{{\rm Aut}(Q_2)}(f_2)$. Let $\varphi\in {\rm Aut}(B(Q_1~{}_{f_2}\bigsqcup{}_{f_1}~Q_2))$. Since $\varphi$ induces an automorphism of $\mathcal{Q}(B(Q_1~{}_{f_2}\bigsqcup{}_{f_1}~Q_2))=Q_1\sqcup Q_2$, from Lemma~\ref{unionaut}(1) follows that $\varphi$ belongs to ${\rm Aut}(Q_1)\times {\rm Aut}(Q_2)$, i.~e. $\varphi=(\varphi_1,\varphi_2)$ for $\varphi_1\in {\rm Aut}(Q_1)$, $\varphi_2\in {\rm Aut}(Q_2)$. Since $\varphi$ is an automorphism of $B(Q_1~{}_{f_2}\bigsqcup{}_{f_1}~Q_2)$,  we have
$\varphi(a\overline{*}b)=\varphi(a)\overline{*}\varphi(b)$ for all $a,b\in B(Q_1~{}_{f_2}\bigsqcup{}_{f_1}~Q_2)$.
If $a\in Q_1$, $b\in Q_2$, then $\varphi(a)=\varphi_1(a)\in Q_1$, $\varphi(b)=\varphi_2(b)\in Q_2$ and 
$$\varphi_1 f_1(a)=\varphi_1(a\overline{*}b)=\varphi(a\overline{*}b)=\varphi(a)\overline{*}\varphi(b)=\varphi_1(a)\overline{*}\varphi_2(b)=f_1\varphi_1(a),$$
i.~e. $\varphi_1\in C_{{\rm Aut}(Q_1)}(f_1)$. Similarly, we can show that $\varphi_2\in C_{{\rm Aut}(Q_2)}(f_2)$.

(2) Let $\alpha:Q_1\to Q_2$ be an isomorpism. Similar to case (1), we see that if $\varphi_1\in C_{{\rm Aut}(Q_1)}(f_1)$, $\varphi_2\in C_{{\rm Aut}(Q_2)}(f_2)$, then the map $\varphi=(\varphi_1,\varphi_2)$ given by $\varphi(x)=\varphi_i(x)$ for $x\in X_i$ is an automorphism of $B(Q_1~{}_{f_2}\bigsqcup{}_{f_1}~Q_2)$. The subgroup of ${\rm Aut}(B(Q_1~{}_{f_2}\bigsqcup{}_{f_1}~Q_2))$ generated by such automorphisms is isomorphic to $C_{{\rm Aut}(Q_1)}(f_1)\times C_{{\rm Aut}(Q_2)}(f_2)$. Let us prove that every automorphism $\varphi\in {\rm Aut}(B(Q_1~{}_{f_2}\bigsqcup{}_{f_1}~Q_2))$ can be written as $\varphi=(\varphi_1,\varphi_2)$ for $\varphi_1\in C_{{\rm Aut}(Q_1)}(f_1)$, $\varphi_2\in C_{{\rm Aut}(Q_2)}(f_2)$.

Let $\varphi$ be an automorphism of $B(Q_1~{}_{f_2}\bigsqcup{}_{f_1}~Q_2)$. Since $\varphi$ induces an automorphism of the quandle $\mathcal{Q}(B(Q_1~{}_{f_2}\bigsqcup{}_{f_1}~Q_2))=Q_1\sqcup Q_2$, from Lemma~\ref{unionaut}(2) follows that either $\varphi=(\varphi_1,\varphi_2)$ or $\varphi=\iota(\varphi_1,\varphi_2)$, where $\varphi_1\in {\rm Aut}(Q_1)$, $\varphi_2\in {\rm Aut}(Q_2)$ and $\iota:Q_1\sqcup Q_2\to Q_1\sqcup Q_2$ is given by formula~(\ref{iota}). Let us prove that the map of the form $\varphi=\iota(\varphi_1,\varphi_2)$ cannot be an automorphism of $B(Q_1~{}_{f_2}\bigsqcup{}_{f_1}~Q_2)$ for any $\varphi_1\in {\rm Aut}(Q_1)$, $\varphi_2\in {\rm Aut}(Q_2)$. On the contrary, suppose that $\varphi=\iota(\varphi_1,\varphi_2)$ is an automorphism of $B(Q_1~{}_{f_2}\bigsqcup{}_{f_1}~Q_2)$. Then for $a\in Q_1$, $b\in Q_2$, we have
\begin{align}
\notag \varphi(a\overline{*}b)&=\varphi(f_1(a))=\iota(\varphi_1,\varphi_2)(f_1(a))=\iota\varphi_1f_1(a)=\alpha\varphi_1f_1(a),\\
\varphi(a)\overline{*}\varphi(b)&=f_2\varphi(a)=f_2\iota(\varphi_1,\varphi_2)(a)=f_2\iota\varphi_1(a)=f_2\alpha\varphi_1(a).
\end{align}
Thus,  $\varphi_1f_1\varphi_1^{-1}=\alpha^{-1}f_2\alpha$, i.~e. $f_1$ and $\alpha^{-1}f_2\alpha$ are conjugate in ${\rm Aut}(Q_1)$ which contradicts the hypothesis. Hence, $\varphi=(\varphi_1,\varphi_2)$ for $\varphi_1\in {\rm Aut}(Q_1)$, $\varphi_2\in {\rm Aut}(Q_2)$ and similar to case (1) we conclude that $\varphi_1\in C_{{\rm Aut}(Q_1)}(f_1)$, $\varphi_2\in C_{{\rm Aut}(Q_2)}(f_2)$.

(3) Let $\alpha:Q_1\to Q_2$ be an isomorphism, and $\alpha^{-1} f_2\alpha=\psi^{-1}f_1\psi$ for $\psi\in{\rm Aut}(Q_1)$. Denote by $\alpha_1=\alpha\psi^{-1}$. It is clear that $\alpha_1:Q_1\to Q_2$ is an isomorphism. Denote by
\begin{align}\label{iota1}
\iota_1(x)=\begin{cases}
\alpha_1(x),& x\in Q_1,\\
\alpha_1^{-1}(x),&x\in Q_2,
\end{cases}
\end{align}
and let us check that $\iota_1$ is an automorphism of $B(Q_1~{}_{f_2}\bigsqcup{}_{f_1}~Q_2)$. It is clear that $\iota_1$ is bijective, we just need to check that $\iota_1$ respects the operations $\overline{*}, \underline{*}$ of $B(Q_1~{}_{f_2}\bigsqcup{}_{f_1}~Q_2)$. If  $a,b\in Q_1$, then $\iota_1(a), \iota_1(b)\in Q_2$ and 
\begin{align}
\notag\iota_1(a\overline{*}b)&=\iota_1(a)=\iota_1(a)\overline{*}\iota_1(b),\\ 
\notag\iota_1(a\underline{*}b)&=\iota_1(a*_1b)=\alpha_1(a*_1b)=\alpha_1(a)*_2\alpha_1(b)=\iota_1(a)\overline{*}\iota_1(b).
\end{align}
In a similar way we can prove that if $a,b\in Q_2$, then $\iota_1(a\overline{*}b)=\iota_1(a)\overline{*}\iota_1(b)$, $\iota_1(a\underline{*}b)=\iota_1(a)\underline{*}\iota_1(b)$. If $a\in Q_1$, $b\in Q_2$, then $\iota_1(a)\in Q_2$, $\iota_1(b)\in Q_1$ and 
\begin{align}
\notag\iota_1(a\overline{*}b)&=\iota_1(f_1(a))=\alpha_1f_1(a)=\alpha\psi^{-1}f_1(a)=\alpha\alpha^{-1}f_2\alpha\psi^{-1}(a)\\
\notag&=f_2\alpha_1(a)=\alpha_1(a)\overline{*}\alpha_1^{-1}(b)=\iota_1(a)\overline{*}\iota_1(b).
\end{align} 
Since for $a\in Q_1$, $b\in Q_2$, we have $a\overline{*}b=a\underline{*}b$, we conclude that if $a\in Q_1$, $b\in Q_2$, then $\iota_1(a\underline{*}b)=\iota_1(a)\underline{*}\iota_1(b)$. In a similar way we can prove that if  $a\in Q_2$, $b\in Q_1$, then $\iota_1(a\overline{*}b)=\iota_1(a)\overline{*}\iota_1(b)$, $\iota_1(a\underline{*}b)=\iota_1(a)\underline{*}\iota_1(b)$. Thus, $\iota_1$ is an automorphism of $B(Q_1~{}_{f_2}\bigsqcup{}_{f_1}~Q_2)$. It is clear that $\iota_1$ is of  order $2$.

Let us prove now that every automorphism $\varphi$ of $B(Q_1~{}_{f_2}\bigsqcup{}_{f_1}~Q_2)$ can be written as either $\varphi=(\varphi_1,\varphi_2)$ or $\varphi=\iota_1(\varphi_1,\varphi_2)$ for $\varphi_1\in C_{{\rm Aut}(Q_1)}(f_1)$, $\varphi_2\in C_{{\rm Aut}(Q_2)}(f_2)$. 
Let $\varphi$ be an automorphism of $B(Q_1~{}_{f_2}\bigsqcup{}_{f_1}~Q_2)$. Since $\varphi$ induces an automorphism of $\mathcal{Q}(B(Q_1~{}_{f_2}\bigsqcup{}_{f_1}~Q_2))=Q_1\sqcup Q_2$, from Lemma~\ref{unionaut}(2) follows that $\varphi$ is either $\varphi=(\varphi_1,\varphi_2)$ or $\varphi=\iota_1(\varphi_1,\varphi_2)$, where $\varphi_1\in {\rm Aut}(Q_1)$, $\varphi_2\in {\rm Aut}(Q_2)$ and $\iota_1:Q_1\sqcup Q_2\to Q_1\sqcup Q_2$ is given by formula~(\ref{iota1}). If $\varphi=(\varphi_1,\varphi_2)$, then similar to case (1) it is easy to see that $\varphi_1\in C_{{\rm Aut}(Q_1)}(f_1)$, $\varphi_2\in C_{{\rm Aut}(Q_2)}(f_2)$. If $\varphi=\iota_1(\varphi_1,\varphi_2)$, then for $a\in Q_1$, $b\in Q_2$, we have 
\begin{align}
\notag\varphi(a\overline{*}b)&=\varphi(f_1(a))=\iota_1(\varphi_1,\varphi_2)(f_1(a))=\iota_1\varphi_1f_1(a)=\alpha_1\varphi_1f_1(a),\\
\notag\varphi(a)\overline{*}\varphi(b)&=f_2\varphi(a)=f_2\iota_1(\varphi_1,\varphi_2)(a)=f_2\iota_1\varphi_1(a)=f_2\alpha_1\varphi_1(a).
\end{align}
Therefore $\alpha_1\varphi_1f_1=f_2\alpha_1\varphi_1$ (since $\varphi$ is an automorphism of $B(Q_1~{}_{f_2}\bigsqcup{}_{f_1}~Q_2)$). From this equality follows that 
$$f_1=\varphi_1^{-1}\alpha_1^{-1}f_2\alpha_1\varphi_1=\varphi_1^{-1}\psi\alpha^{-1}f_2\alpha\psi^{-1}\varphi_1=\varphi_1^{-1}f_1\varphi_1,$$
i.~e. $\varphi_1\in C_{{\rm Aut}(Q_1)}(f_1)$. In a similar way we can prove that $\varphi_2\in C_{{\rm Aut}(Q_2)}(f_2)$. Therefore the group 
${\rm Aut}(B(Q_1~{}_{f_2}\bigsqcup{}_{f_1}~Q_2))$ is generated by $C_{{\rm Aut}(Q_1)}(f_1)\times C_{{\rm Aut}(Q_2)}(f_2)$ and automorphism $\iota_1$ (of order~$2$). The equality
\begin{align}
\notag\iota_1(\varphi_1,\varphi_2)\iota_1(x)&=\begin{cases}
\alpha_1^{-1}\varphi_2\alpha_1(x),&x\in Q_1\\
\alpha_1 \varphi_1\alpha_1^{-1}(x),&x\in Q_2
\end{cases}\\
\notag&=\begin{cases}
(\alpha\psi^{-1})^{-1}\varphi_2(\alpha\psi^{-1})(x),&x\in Q_1\\
(\alpha\psi^{-1}) \varphi_1(\alpha\psi^{-1})^{-1}(x),&x\in Q_2
\end{cases}
\end{align}
implies that ${\rm Aut}(B(Q_1~{}_{f_2}\bigsqcup{}_{f_1}~Q_2))=(C_{{\rm Aut}(Q_1)}(f_1)\times C_{{\rm Aut}(Q_2)}(f_2))\rtimes\langle\iota_1\rangle$.
\end{proof}

\subsection{Automorphisms of product biquandles}\label{sec-product-biquandle}
 The following result about automorphisms of product biquandles is proved in \cite[Proposition~5.3]{Horvat}.
\begin{proposition}\label{classicalhorvataut}Let $Q_1,Q_2$ be connected quandles. Then ${\rm Aut}(B(Q_1\times Q_2))={\rm Aut}(Q_1)\times {\rm Aut}(Q_2)$.
\end{proposition}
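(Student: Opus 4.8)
The plan is to establish the two inclusions separately. Throughout write $B=B(Q_1\times Q_2)$, with underlying set $X_1\times X_2$ and operations $(x,a)\underline{*}(y,b)=(x*_1y,a)$ and $(x,a)\overline{*}(y,b)=(x,a*_2b)$. For the easy inclusion, given $f_1\in{\rm Aut}(Q_1)$ and $f_2\in{\rm Aut}(Q_2)$ one defines $f_1\times f_2\colon(x,a)\mapsto(f_1(x),f_2(a))$; a one-line verification using the displayed operations shows $f_1\times f_2$ is a bijection respecting $\underline{*}$ and $\overline{*}$, so that $(f_1,f_2)\mapsto f_1\times f_2$ is an injective homomorphism ${\rm Aut}(Q_1)\times{\rm Aut}(Q_2)\hookrightarrow{\rm Aut}(B)$. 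The whole content of the proposition is that this map is onto.

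For the reverse inclusion, the key point is that the decomposition of $X_1\times X_2$ into \emph{rows} $\{x\}\times X_2$ and \emph{columns} $X_1\times\{a\}$ can be recovered from the biquandle operations alone. For $(y,b)\in B$ the bijection $\beta_{(y,b)}\colon(x,a)\mapsto(x,a)\overline{*}(y,b)=(x,a*_2b)$ equals ${\rm id}_{X_1}\times S_b$, so the subgroup $N=\langle\beta_{(y,b)}\mid(y,b)\in B\rangle$ of ${\rm Sym}(X_1\times X_2)$ is $\{{\rm id}\times g\mid g\in{\rm Inn}(Q_2)\}$, and since $Q_2$ is connected its orbits are exactly the rows. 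Dually, $\alpha_{(y,b)}\colon(x,a)\mapsto(x,a)\underline{*}(y,b)=(x*_1y,a)$ equals $S_y\times{\rm id}_{X_2}$, the group $M=\langle\alpha_{(y,b)}\mid(y,b)\in B\rangle$ is $\{g\times{\rm id}\mid g\in{\rm Inn}(Q_1)\}$, and connectedness of $Q_1$ makes its orbits exactly the columns.

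Now take $\phi\in{\rm Aut}(B)$. Compatibility with $\overline{*}$ reads $\phi\,\beta_{(y,b)}=\beta_{\phi(y,b)}\,\phi$ for all $(y,b)$, so $\phi$ normalizes $N$ and hence permutes its orbits; thus $\phi$ sends each row onto a row, inducing a permutation $\sigma$ of $X_1$. Likewise compatibility with $\underline{*}$ shows $\phi$ sends each column onto a column, inducing a permutation $\tau$ of $X_2$. Since every element of $B$ is the unique point lying in both its row and its column, it follows that $\phi=\sigma\times\tau$. (Note that no ``row $\leftrightarrow$ column'' swap can occur, because $\phi$ respects $\overline{*}$ and $\underline{*}$ individually; this is why, unlike in Theorem~\ref{auto-union-biquandle}, no extra $\mathbb{Z}_2$ appears.) Finally, expanding $\phi\big((x,a)\underline{*}(y,b)\big)=\phi(x,a)\underline{*}\phi(y,b)$ with $\phi=\sigma\times\tau$ yields $\sigma(x*_1y)=\sigma(x)*_1\sigma(y)$, so $\sigma\in{\rm Aut}(Q_1)$, and similarly $\overline{*}$ forces $\tau\in{\rm Aut}(Q_2)$. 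Hence $\phi$ lies in the image of the first inclusion, proving ${\rm Aut}(B(Q_1\times Q_2))={\rm Aut}(Q_1)\times{\rm Aut}(Q_2)$.

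The step I expect to be the main obstacle is the second one: showing that the row/column partition is intrinsic. The subtlety is that a single application of $\overline{*}$ (resp.\ $\underline{*}$) need not reach every element of a given row (resp.\ column), so one cannot argue at the level of individual operations; one must pass to the groups $N$ and $M$ generated by the translation maps and use connectedness of $Q_2$ and $Q_1$ to identify their orbits precisely with the rows and columns. Once this is in place, everything else — the explicit inclusion, the normalization argument, and the final verification that $\sigma,\tau$ are quandle automorphisms — is routine.
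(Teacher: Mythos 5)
Your argument is correct, and it is genuinely different from how the paper handles this statement: the paper does not prove Proposition~\ref{classicalhorvataut} directly but quotes it from \cite[Proposition~5.3]{Horvat}, and recovers it (for finite quandles) as the special case $\psi(Q_2)=\{id\}$ of Theorem~\ref{automconprod} via Corollary~\ref{genhorv1}; that proof is a long elementwise computation which picks $h$ with $\widehat{h}=id$, uses connectedness of $Q_1$ to show the second coordinate of $\varphi(x,f)$ depends only on $f$, and needs finiteness to conclude that the induced maps $\alpha_f$, $\beta$ are bijective. Your route instead makes the row/column partition intrinsic: the $\overline{*}$- and $\underline{*}$-translations are $id\times S_b$ and $S_y\times id$, the groups they generate are $\{id\}\times{\rm Inn}(Q_2)$ and ${\rm Inn}(Q_1)\times\{id\}$, connectedness identifies their orbits with rows and columns, and the identity $\phi\,\beta_{(y,b)}\,\phi^{-1}=\beta_{\phi(y,b)}$ shows any automorphism normalizes these groups and hence permutes rows and columns, forcing $\phi=\sigma\times\tau$ with $\sigma,\tau$ quandle automorphisms. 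What each approach buys: yours is shorter, needs no finiteness (so it matches the proposition as stated, which does not assume finite quandles), and isolates the structural reason the answer is a direct product; the paper's computation is heavier but is built to cover the twisted products $B(Q_1\times_{\psi}Q_2)$, where the $\overline{*}$-translations no longer preserve the first coordinate and your orbit argument would not carry over unchanged. One small point worth making explicit if you write this up: $\sigma$ is well defined and bijective because $\phi$ and $\phi^{-1}$ both permute the set of $N$-orbits, and rows cannot be sent to columns since conjugation by $\phi$ sends the generating set of $N$ to itself rather than to that of $M$ --- you gesture at this, and it is exactly right.
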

In this section we study automorphisms of the product biquandles of the form $B(Q_1\times_{\psi}Q_2)$. Such quandles generalize quandles of the form $B(Q_1\times Q_2)$, and the results of this section essentially extend the result of Proposition~\ref{classicalhorvataut}.

Let $Q_1=(X_1, *_1)$, $Q_2=(X_2, *_2)$ be quandles, and $\psi: Q_2 \to \Conj_{-1} (\Aut (Q_1))$ be a homomorphism. For $f\in X_2$ denote by $\psi(f)=\widehat{f}$. Then the set $X_1\times X_2$ with the operations
\begin{align}
\notag(x, f) \underline{*} (y, g)&= \big(\widehat{g}(x*_1y), f \big)\\ \notag(x, f) \overline{*} (y, g)&= \big(\widehat{g}(x), f *_2 g\big)
\end{align}
for $(x, f), (y, g) \in Q_1 \times Q_2$ is a biquandle (see Section~\ref{newproductofquandles}) which is denoted by $B(Q_1\times_{\psi}Q_2)$. 
\begin{lemma}\label{compat}Let $Q_1$, $Q_2$ be quandles, and $\psi: Q_2 \to \Conj_{-1} (\Aut (Q_1))$ be a homomorphism. Then the set ${\rm Aut}_{\psi}(Q_1\times Q_2)=\{(\alpha,\beta)~|~\alpha\in {\rm Aut}(Q_1), \beta\in {\rm Aut}(Q_2), \widehat{\beta(f)}=\alpha\widehat{f}\alpha^{-1}~\text{for all}~f\in Q_2\}$ is a subgroup of ${\rm Aut}(Q_1)\times {\rm Aut}(Q_2)$.
\end{lemma}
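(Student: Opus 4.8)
The plan is to verify directly that $\operatorname{Aut}_{\psi}(Q_1 \times Q_2)$ contains the identity and is closed under products and inverses, viewing it as a subset of the group $\operatorname{Aut}(Q_1) \times \operatorname{Aut}(Q_2)$. The defining condition is the compatibility relation $\widehat{\beta(f)} = \alpha \widehat{f} \alpha^{-1}$ for all $f \in Q_2$, so every step reduces to manipulating this conjugation identity, using that $\psi \colon Q_2 \to \Conj_{-1}(\Aut(Q_1))$ is a fixed homomorphism and that each $\beta \in \operatorname{Aut}(Q_2)$ is in particular a bijection of the underlying set $X_2$.

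First I would note that $(\id_{Q_1}, \id_{Q_2}) \in \operatorname{Aut}_{\psi}(Q_1 \times Q_2)$, since $\widehat{\id(f)} = \widehat{f} = \id \,\widehat{f}\, \id^{-1}$ trivially. Next, for closure, I would take $(\alpha_1,\beta_1),(\alpha_2,\beta_2) \in \operatorname{Aut}_{\psi}(Q_1 \times Q_2)$ and compute, for an arbitrary $f \in Q_2$,
\[
\widehat{(\beta_1\beta_2)(f)} = \widehat{\beta_1(\beta_2(f))} = \alpha_1 \,\widehat{\beta_2(f)}\, \alpha_1^{-1} = \alpha_1 \alpha_2 \,\widehat{f}\, \alpha_2^{-1} \alpha_1^{-1} = (\alpha_1\alpha_2)\,\widehat{f}\,(\alpha_1\alpha_2)^{-1},
\]
where the second equality uses the condition for $(\alpha_1,\beta_1)$ applied to the element $\beta_2(f)$, and the third uses the condition for $(\alpha_2,\beta_2)$ applied to $f$. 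Hence $(\alpha_1\alpha_2, \beta_1\beta_2) \in \operatorname{Aut}_{\psi}(Q_1 \times Q_2)$.

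Finally, for inverses, given $(\alpha,\beta) \in \operatorname{Aut}_{\psi}(Q_1 \times Q_2)$ and any $f \in Q_2$, I would apply the defining condition to the element $\beta^{-1}(f)$ in place of $f$ — which is legitimate precisely because $\beta$ is a bijection of $X_2$, so $\beta^{-1}(f)$ is again an element of $Q_2$ — obtaining $\widehat{f} = \widehat{\beta(\beta^{-1}(f))} = \alpha\, \widehat{\beta^{-1}(f)}\, \alpha^{-1}$, and hence $\widehat{\beta^{-1}(f)} = \alpha^{-1}\, \widehat{f}\, \alpha$. This is exactly the compatibility condition for the pair $(\alpha^{-1}, \beta^{-1})$, so $(\alpha^{-1},\beta^{-1}) \in \operatorname{Aut}_{\psi}(Q_1 \times Q_2)$, and the subgroup axioms are established. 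There is no genuine obstacle here; the only point requiring a moment's care is the inverse step, where one must explicitly invoke the bijectivity of $\beta$ to justify the substitution $f \mapsto \beta^{-1}(f)$.
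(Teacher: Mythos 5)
Your proposal is correct and is exactly the direct verification the paper leaves to the reader (its proof of Lemma \ref{compat} is just ``A direct check''): identity, closure under composition, and closure under inverses via the substitution $f \mapsto \beta^{-1}(f)$. Nothing further is needed.
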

\begin{proof} A direct check.
\end{proof}

\begin{proposition}\label{auto-product-subgroup}
Let $Q_1$, $Q_2$ be quandles, and $\psi: Q_2 \to \Conj_{-1} (\Aut (Q_1))$ be a homomorphism. Let $Q_2$ has orbits $P_1,P_2,\dots,P_k$, and $\chi:Q_2\to \{1,\dots,k\}$ be a map given by $\chi(x)=i$ if $x\in P_i$. Let $\delta_1, \delta_2,\dots,\delta_k\in C_{{\rm Aut}(Q_1)}(\psi(Q_2)\cup{\rm Inn}(Q_1))$, and $(\alpha,\beta)\in{\rm Aut}_{\psi}(Q_1\times Q_2)$. Then the map $\varphi$ given by 
\begin{equation}\label{specialform}
\varphi(x,f)=(\alpha\delta_{\chi(f)}(x),\beta(f))
\end{equation}
is an automorphism of $B(Q_1\times_{\psi}Q_2)$. The set of all such automorphisms forms a subgroup in ${\rm Aut}(B(Q_1\times_{\psi}Q_2))$.
\end{proposition}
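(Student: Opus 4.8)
The plan is to verify directly that every map $\varphi$ of the form \eqref{specialform} is a bijective self-map of $X_1\times X_2$ that is compatible with both biquandle operations, and then that the collection of all such maps is closed under composition and inversion, hence a subgroup of $\Aut(B(Q_1\times_{\psi}Q_2))$. Before starting the computations I would record two observations that carry most of the weight. First, since each $\delta_i$ lies in $C_{\Aut(Q_1)}(\psi(Q_2)\cup\Inn(Q_1))$, it commutes with every inner automorphism $S_b$ of $Q_1$, so $\delta_i(a*_1b)=\delta_i S_b(a)=S_b\delta_i(a)=\delta_i(a)*_1b$, and also $S_{\delta_i(b)}=\delta_i S_b\delta_i^{-1}=S_b$, whence $a*_1\delta_i(b)=a*_1b$; informally, the $\delta_i$ are invisible to the right argument of $*_1$, and they commute with every $\widehat g=\psi(g)$. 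Second, since $\beta\in\Aut(Q_2)$ permutes the $\Inn(Q_2)$-orbits of $Q_2$, it induces a permutation $\bar\beta$ of $\{1,\dots,k\}$ with $\chi(\beta(f))=\bar\beta(\chi(f))$, and since $f*_2g=S_g(f)$ lies in the orbit of $f$ we have $\chi(f*_2g)=\chi(f)$. Bijectivity of $\varphi$ is then clear (for fixed $f$ the first coordinate is the bijection $\alpha\delta_{\chi(f)}$ and the second is $\beta$), and one has $\varphi^{-1}(y,g)=\big(\delta_{\chi(\beta^{-1}(g))}^{-1}\alpha^{-1}(y),\ \beta^{-1}(g)\big)$.

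Next I would check compatibility with $\underline{*}$. Expanding the two sides, $\varphi\big((x,f)\underline{*}(y,g)\big)$ has first coordinate $\alpha\delta_{\chi(f)}\widehat g(x*_1y)$ and second coordinate $\beta(f)$, while $\varphi(x,f)\underline{*}\varphi(y,g)$ has second coordinate $\beta(f)$ and first coordinate $\widehat{\beta(g)}\big(\alpha\delta_{\chi(f)}(x)*_1\alpha\delta_{\chi(g)}(y)\big)$. Using the defining relation $\widehat{\beta(g)}=\alpha\widehat g\alpha^{-1}$ of $\Aut_{\psi}(Q_1\times Q_2)$ to bring $\alpha$ outside, then the identity $a*_1\delta_{\chi(g)}(y)=a*_1y$, the commutation $\widehat g\delta_{\chi(f)}=\delta_{\chi(f)}\widehat g$, and $\delta_{\chi(f)}(a*_1b)=\delta_{\chi(f)}(a)*_1b$, both sides collapse to $\alpha\big(\delta_{\chi(f)}\widehat g(x)*_1\widehat g(y)\big)$. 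The check for $\overline{*}$ is similar and a little shorter: there $\chi(f*_2g)=\chi(f)$ keeps the $\delta$-index unchanged, the second coordinates are $\beta(f)*_2\beta(g)=\beta(f*_2g)$ on both sides, and the first coordinates agree again by $\widehat{\beta(g)}=\alpha\widehat g\alpha^{-1}$ and $\widehat g\delta_{\chi(f)}=\delta_{\chi(f)}\widehat g$. Thus each such $\varphi$ is an automorphism of $B(Q_1\times_{\psi}Q_2)$.

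For the subgroup statement, the identity arises from $\alpha=\beta=\id$ and all $\delta_i=\id$. Given $\varphi$ with data $(\alpha,\beta,\delta_\bullet)$ and $\varphi'$ with data $(\alpha',\beta',\delta'_\bullet)$, I would check that $\varphi'\varphi$ is again of the form \eqref{specialform}, with the pair $(\alpha'\alpha,\beta'\beta)$ (which lies in $\Aut_{\psi}(Q_1\times Q_2)$ by Lemma~\ref{compat}) and with $\delta''_j=(\alpha^{-1}\delta'_{\bar\beta(j)}\alpha)\,\delta_j$; the only subtlety is that $\delta''_j$ is admissible, which holds because conjugation by $\alpha$ preserves $\Inn(Q_1)$ and, by $\alpha\widehat g\alpha^{-1}=\widehat{\beta(g)}$, preserves $\psi(Q_2)$, so $\alpha^{-1}\delta'_{\bar\beta(j)}\alpha$ again centralizes $\psi(Q_2)\cup\Inn(Q_1)$, and this centralizer is a subgroup of $\Aut(Q_1)$. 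Closure under inversion is handled the same way from the explicit formula for $\varphi^{-1}$ above. The hard part is the bookkeeping in the $\underline{*}$-compatibility check: one must recognize that belonging to $C_{\Aut(Q_1)}(\Inn(Q_1))$ is precisely what forces the $\delta_i$ to act only on the left argument of $*_1$, and then combine this with the $\psi(Q_2)$-centralizing property and the relation $\widehat{\beta(g)}=\alpha\widehat g\alpha^{-1}$ in the correct order; a secondary nuisance is carrying the orbit-index shift $\bar\beta$ through the closure arguments.
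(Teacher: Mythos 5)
Your proposal is correct and takes essentially the same route as the paper: the direct verification of bijectivity and of compatibility with $\underline{*}$ and $\overline{*}$ rests on exactly the same facts the paper uses, namely that each $\delta_i$ centralizes $\Inn(Q_1)$ (so it is invisible in the right slot of $*_1$ and slides past the left slot) and $\psi(Q_2)$, that $\widehat{\beta(g)}=\alpha\widehat{g}\alpha^{-1}$ for $(\alpha,\beta)\in{\rm Aut}_{\psi}(Q_1\times Q_2)$, and that $\chi(f*_2g)=\chi(f)$. You additionally verify closure under composition and inversion (with the orbit-index shift $\bar\beta$ and the conjugated $\delta$'s remaining in the centralizer), a point the paper's proof leaves implicit, but this is a completion of the same argument rather than a different method.
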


\begin{proof}Let us prove that $\varphi$ is bijective. If for $x,y\in Q_1$, $f,g\in Q_2$ we have
$$(\alpha\delta_{\chi(f)}(x),\beta(f))=\varphi(x,f)=\varphi(y,g)=(\alpha\delta_{\chi(g)}(y),\beta(g)),$$
then from the second coordinate we obtain $\beta(f)=\beta(g)$, and therefore $f=g$. Comparing the first coordinate gives $x=y$. Thus, we proved that $\varphi$ is injective. Let $(y,g)\in B(Q_1\times_{\psi}Q_2)$. Since $\beta$ is an automorphism of $Q_2$, there exists $f\in Q_2$ such that $\beta(f)=g$. If we denote by $x=\delta_{\chi(f)}^{-1}\alpha^{-1}(y)$, then 
$$\varphi(x,f)=(\alpha\delta_{\chi(f)}(x),\beta(f))=(y,g),$$
Hence, we proved that $\varphi$ is bijective. Let us prove that $\varphi$ is a homomorphism. For $x,y\in Q_1$, $f,g\in Q_2$ we have
\begin{align}
\label{nec11} \varphi((x,f)\underline{*}(y,g))&=\varphi(\widehat{g}(x*_1y),f)=(\alpha\delta_{\chi(f)}\widehat{g}(x*_1y),\beta(f)),\\
\label{nec12}\varphi(x,f)\underline{*}\varphi(y,g)&=(\alpha\delta_{\chi(f)}(x),\beta(f))\underline{*}(\alpha\delta_{\chi(g)}(y),\beta(g))\\
\notag&=(\widehat{\beta(g)}(\alpha\delta_{\chi(f)}(x)*_1\alpha\delta_{\chi(g)}(y)), \beta(f))\\
\notag&=(\widehat{\beta(g)}\alpha(\delta_{\chi(f)}(x)*_1\delta_{\chi(g)}(y)), \beta(f)).
\end{align}
Since $\delta_i\in C_{{\rm Aut}(Q_1)}({\rm Inn}(Q_1))$ for $i=1,\dots,k$, for $y\in Q_1$, we have 
\begin{equation}\label{simpleconj}
\delta_iS_y\delta_i^{-1}=S_y,
\end{equation} 
where $S_y(x)=x*_1y$ is an inner automorphism. Further, since 
$$
\delta_iS_y\delta_i^{-1}(x)=\delta_i(\delta_i^{-1}(x)*_1y)=x*_1\delta_i(y),
$$
from equality (\ref{simpleconj}) for all $x,y\in Q_1$, $i,j=1,\dots,k$ we have $x*_1\delta_i(x)=x*_1y=x*_1\delta_j(y)$, in particular,
\begin{equation}\label{verytriv}
\delta_{\chi(f)}(x)*_1\delta_{\chi(g)}(x)=\delta_{\chi(f)}(x)*_1\delta_{\chi(f)}(y)=\delta_{\chi(f)}(x*_1y).
\end{equation} 
Since $(\alpha,\beta)\in {\rm Aut}_{\psi}(Q_1\times Q_2)$, we have the equality  $\widehat{\beta(g)}=\alpha\widehat{g}\alpha^{-1}$. Therefore
\begin{align}\label{nec13}\widehat{\beta(g)}\alpha(\delta_{\chi(f)}(x)*_1\delta_{\chi(g)}(y))&=\alpha\widehat{g}\delta_{\chi(f)}(x*_1y),\text{using}~(\ref{verytriv})\\
\notag&=\alpha\delta_{\chi(f)}\widehat{g}(x*_1y),\text{since}~\delta_{\chi(f)}\in C_{{\rm Aut}(Q_1)}(\psi(Q_2))
\end{align}
Equalities (\ref{nec11}), (\ref{nec12}) and  (\ref{nec13}) imply that $\varphi((x,f)\underline{*}(y,g))=\varphi(x,f)\underline{*}\varphi(y,g)$, and hence the map $\varphi$ respects the operation $\underline{*}$. Let us prove that $\varphi$ respects the operation $\overline{*}$. For $x,y\in Q_1$, $f,g\in Q_2$, we have
\begin{align}
\label{nec21} \varphi((x,f)\overline{*}(y,g))&=\varphi(\widehat{g}(x),f*_2g)=(\alpha\delta_{\chi(f*_2g)}\widehat{g}(x),\beta(f*_2g))\\
\notag &=(\alpha\delta_{\chi(f)}\widehat{g}(x),\beta(f*_2g)),~\text{since}~f~\text{and}~f*_2g~\text{are in the same orbit}\\
\label{nec22}\varphi(x,f)\overline{*}\varphi(y,g)&=(\alpha\delta_{\chi(f)}(x),\beta(f))\overline{*}(\alpha\delta_{\chi(g)}(y),\beta(g))=(\widehat{\beta(g)}\alpha\delta_{\chi(f)}(x),\beta(f)*_2\beta(g))\\
\notag&=(\alpha\widehat{g}\delta_{\chi(f)}(x),\beta(f*_2g)),~\text{since}~(\alpha,\beta)\in{\rm Aut}_{\psi}(Q_1\times Q_2).
\end{align}
Since $\delta_{\chi(f)}\in C_{{\rm Aut}(Q_1)}(\psi(Q_2))$, equalities (\ref{nec21}), (\ref{nec22}) imply $\varphi((x,f)\underline{*}(y,g))=\varphi(x,f)\underline{*}\varphi(y,g)$, i.~e. $\varphi$ respects the operation $\overline{*}$, and therefore $\varphi$ is an automorphism of $B(Q_1\times_{\psi}Q_2)$. 
\end{proof}
Denote by $H$ the subgroup of ${\rm Aut}(B(Q_1\times_{\psi}Q_2))$ generated by all automorphisms of the form (\ref{specialform}) and let us understand how this subgroup $H$ looks like. 
\begin{lemma}\label{easysubgroups} ${\rm Aut}_{\psi}(Q_1\times Q_2)\leq H$, $\left(C_{{\rm Aut}(Q_1)}\left(\psi(Q_2)\cup {\rm Inn}(Q_1)\right)\right)^k\trianglelefteq H$.
\end{lemma}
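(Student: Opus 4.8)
Write $K=C_{{\rm Aut}(Q_1)}\big(\psi(Q_2)\cup{\rm Inn}(Q_1)\big)$ for brevity, so that the claim reads ${\rm Aut}_\psi(Q_1\times Q_2)\le H$ and $K^k\trianglelefteq H$. The plan is to realise both ${\rm Aut}_\psi(Q_1\times Q_2)$ and $K^k$ as subgroups of $H$ by specialising formula~(\ref{specialform}), to show that together they generate $H$, and then to deduce normality of $K^k$ from an explicit conjugation formula.

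First, taking $\delta_1=\dots=\delta_k={\rm id}$ in~(\ref{specialform}) shows that for every $(\alpha,\beta)\in{\rm Aut}_\psi(Q_1\times Q_2)$ the diagonal map $d_{(\alpha,\beta)}\colon(x,f)\mapsto(\alpha(x),\beta(f))$ is an automorphism of the form~(\ref{specialform}) (note ${\rm id}\in K$ and $({\rm id},{\rm id})\in{\rm Aut}_\psi(Q_1\times Q_2)$), hence lies in $H$; since $(\alpha,\beta)\mapsto d_{(\alpha,\beta)}$ is an injective group homomorphism this gives ${\rm Aut}_\psi(Q_1\times Q_2)\le H$. Symmetrically, taking $\alpha=\beta={\rm id}$ and arbitrary $(\delta_1,\dots,\delta_k)\in K^k$ shows that $\Delta_{(\delta_1,\dots,\delta_k)}\colon(x,f)\mapsto(\delta_{\chi(f)}(x),f)$ lies in $H$; one checks $\Delta_{\vec\delta}\,\Delta_{\vec\epsilon}=\Delta_{\vec\delta\vec\epsilon}$ and, using that $\chi$ is onto $\{1,\dots,k\}$ (each orbit $P_i$ being nonempty), that $\vec\delta\mapsto\Delta_{\vec\delta}$ is injective, so $K^k\le H$. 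Finally, every generator $\varphi$ of $H$ as in~(\ref{specialform}) factors as $\varphi=d_{(\alpha,\beta)}\circ\Delta_{(\delta_1,\dots,\delta_k)}$, and conversely each $d_{(\alpha,\beta)}$ and each $\Delta_{\vec\delta}$ is a generator of $H$; hence $H=\big\langle\,{\rm Aut}_\psi(Q_1\times Q_2),\ K^k\,\big\rangle$.

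For normality, it suffices — the normaliser of $K^k$ in $H$ being a subgroup — to check that the two generating subgroups normalise $K^k$. That $K^k$ normalises itself is clear. For a diagonal automorphism $d_{(\alpha,\beta)}$, writing $\pi$ for the permutation of $\{1,\dots,k\}$ that $\beta$ induces on the orbits $P_1,\dots,P_k$ of $Q_2$ (so $\chi\circ\beta=\pi\circ\chi$, hence $\chi\circ\beta^{-1}=\pi^{-1}\circ\chi$), a direct computation gives
\begin{equation}\notag
d_{(\alpha,\beta)}\,\Delta_{(\delta_1,\dots,\delta_k)}\,d_{(\alpha,\beta)}^{-1}=\Delta_{(\epsilon_1,\dots,\epsilon_k)},\qquad\text{where}\quad\epsilon_i=\alpha\,\delta_{\pi^{-1}(i)}\,\alpha^{-1}.
\end{equation}
So the entire matter reduces to verifying that $\alpha\,K\,\alpha^{-1}=K$.

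This last point is the heart of the argument. Here I would use two facts. First, any quandle automorphism $\alpha$ of $Q_1$ normalises ${\rm Inn}(Q_1)$, since $\alpha S_y\alpha^{-1}=S_{\alpha(y)}$ for all $y\in Q_1$. Second, the relation $\widehat{\beta(f)}=\alpha\widehat{f}\alpha^{-1}$ defining ${\rm Aut}_\psi(Q_1\times Q_2)$, combined with the surjectivity of $\beta\colon Q_2\to Q_2$, yields $\alpha\,\psi(Q_2)\,\alpha^{-1}=\{\widehat{\beta(f)}\mid f\in Q_2\}=\psi(Q_2)$. Thus $\alpha$ normalises the subset $\psi(Q_2)\cup{\rm Inn}(Q_1)$ of ${\rm Aut}(Q_1)$, and the elementary fact that conjugation by an element normalising a set $S$ preserves $C_G(S)$ then gives $\alpha\,K\,\alpha^{-1}=K$. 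Consequently each $\epsilon_i$ lies again in $K$, so $d_{(\alpha,\beta)}$ normalises $K^k$, and the lemma follows. The only real obstacle is the bookkeeping in the conjugation formula — in particular the index shift by $\pi^{-1}$ coming from $\chi\circ\beta^{-1}=\pi^{-1}\circ\chi$ — and the recognition that the compatibility condition defining ${\rm Aut}_\psi(Q_1\times Q_2)$ is exactly what makes $\alpha$ preserve the centraliser $K$; everything else is a routine check.
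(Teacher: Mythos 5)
Your proof is correct and takes essentially the same route as the paper: realise both subgroups as specialisations of the maps in (\ref{specialform}), observe that every generator of $H$ factors as a diagonal automorphism composed with a $\Delta_{\vec\delta}$, and compute the conjugation formula with the index shift $\chi\circ\beta^{-1}=\pi^{-1}\circ\chi$, which is exactly the paper's $\delta_{\chi(\beta^{-1}(f))}$ written with the orbit permutation made explicit. The one place you add detail is the verification that $\alpha$ normalises $\psi(Q_2)\cup{\rm Inn}(Q_1)$ (via $\alpha S_y\alpha^{-1}=S_{\alpha(y)}$ and the compatibility relation together with surjectivity of $\beta$), hence preserves the centraliser $C_{{\rm Aut}(Q_1)}(\psi(Q_2)\cup{\rm Inn}(Q_1))$ --- a point the paper uses implicitly without comment.
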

\begin{proof}The set of automorphisms of $B(Q_1\times_{\psi}Q_2)$ of the form
$$(x,f)\mapsto (\alpha(x),\beta(f))$$
for $(\alpha,\beta)\in {\rm Aut}_{\psi}(Q_1\times Q_2)$ clearly forms a subgroup of $H$  isomorphic to  ${\rm Aut}_{\psi}(Q_1\times Q_2)$.

For $\delta_1,\delta_2,\dots,\delta_k\in C_{{\rm Aut}(Q_1)}\left(\psi(Q_2)\cup {\rm Inn}(Q_1)\right)$ denote by $(\delta_1,\delta_2,\dots,\delta_k)$ the automorphism of $B(Q_1\times_{\psi}Q_2)$ which acts by the rule
\begin{equation}\label{directproductaut}(\delta_1,\dots,\delta_k)(x,f)=(\delta_{\chi(f)}(x),f).
\end{equation}
Since for all $\delta_1,\dots,\delta_k, \xi_1,\dots,\xi_k\in C_{{\rm Aut}(Q_1)}(\psi(Q_2)\cup {\rm Inn}(Q_1))$ the equality
$$(\xi_1,\dots,\xi_k)(\delta_1,\dots,\delta_k)(x,f)=(\xi_1,\dots,\xi_k)(\delta_{\chi(f)}(x),f)=(\xi_{\chi(f)}\delta_{\chi(f)}(x),f),$$
holds, we conclude that $(\xi_1,\dots,\xi_k)(\delta_1,\dots,\delta_k)=(\xi_1\delta_1,\dots,\xi_k\delta_k)$, i.~e. the subgroup of $H$ generated by all automorphisms of the form (\ref{directproductaut}) is isomorphic to $\left(C_{{\rm Aut}(Q_1)}\left(\psi(Q_2)\cup {\rm Inn}(Q_1)\right)\right)^k$. 

Let $\varphi$ be an automorphism from $H$ of the form 
$$\varphi(x,f)=(\alpha\delta_{\chi(f)}(x)),\beta(f)$$
for $(\alpha,\beta)\in {\rm Aut}_{\psi}(Q_1\times Q_2)$, $\delta_1,\dots,\delta_k\in C_{{\rm Aut}(Q_1)}(\psi(Q_2)\cup {\rm Inn}(Q_1))$. Then
$$\varphi^{-1}(x,f)=\left(\delta^{-1}_{\chi(\beta^{-1}(f))}\alpha^{-1}(x),\beta^{-1}(f)\right).$$
For $\psi=(\xi_1,\xi_2,\dots,\xi_k)\in \left(C_{{\rm Aut}(Q_1)}\left(\psi(Q_2)\cup {\rm Inn}(Q_1)\right)\right)^k$ we have
\begin{align}
\label{conjugationinH}\varphi\psi\varphi^{-1}(x,f)&=\varphi\psi\left(\delta^{-1}_{\chi(\beta^{-1}(f))}\alpha^{-1}(x),\beta^{-1}(f)\right)\\
\notag&=\varphi\left(\xi_{\chi(\beta^{-1}(f))}\delta^{-1}_{\chi(\beta^{-1}(f))}\alpha^{-1}(x),\beta^{-1}(f)\right)\\
\notag&=\left(\alpha\left(\delta_{\chi(\beta^{-1}(f))}\xi_{\chi(\beta^{-1}(f))}\delta^{-1}_{\chi(\beta^{-1}(f))}\right)\alpha^{-1}(x),f\right).
\end{align}
Since $\alpha\left(\delta_{\chi(\beta^{-1}(f))}\xi_{\chi(\beta^{-1}(f))}\delta_{\chi(\beta^{-1}(f))}^{-1}\right)\alpha^{-1}$ belongs to $C_{{\rm Aut}(Q_1)}\left(\psi(Q_2)\cup {\rm Inn}(Q_1)\right)$, we conclude that $\varphi\psi\varphi^{-1}$ belongs to $\left(C_{{\rm Aut}(Q_1)}\left(\psi(Q_2)\cup {\rm Inn}(Q_1)\right)\right)^k$.
\end{proof}
\begin{proposition}\label{whenorbitfixed}If $Q_2$ is a union of orbits $P_1,P_2,\dots,P_k$, and $\beta(P_1)=P_1$ for all $\beta\in {\rm Aut}(Q_2)$, then $H=\left(C_{{\rm Aut}(Q_1)}\left(\psi(Q_2)\cup {\rm Inn}(Q_1)\right)\right)^{k-1}\rtimes {\rm Aut}_{\psi}(Q_1\times Q_2)$.
\end{proposition}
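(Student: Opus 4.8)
The plan is to realise $H$ as an internal semidirect product of two subgroups already located in Lemma~\ref{easysubgroups}, after ``trimming'' one coordinate. Throughout write $C:=C_{{\rm Aut}(Q_1)}\big(\psi(Q_2)\cup{\rm Inn}(Q_1)\big)$. Let $A_0\leq H$ be the subgroup of automorphisms $(x,f)\mapsto(\alpha(x),\beta(f))$ with $(\alpha,\beta)\in{\rm Aut}_{\psi}(Q_1\times Q_2)$, let $D\leq H$ be the subgroup of automorphisms of the form (\ref{directproductaut}), so $D\cong C^{k}$, and let $N\leq D$ consist of those tuples $(\delta_1,\dots,\delta_k)$ with $\delta_1=\id$, so that $N\cong C^{k-1}$. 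By Lemma~\ref{easysubgroups} all of $A_0$, $D$, $N$ lie in $H$, with $D$ normal in $H$.

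The first step is an absorption trick singling out the orbit $P_1$. Given a generating automorphism $\varphi(x,f)=(\alpha\,\delta_{\chi(f)}(x),\beta(f))$ of $H$ as in (\ref{specialform}), put $\alpha':=\alpha\delta_1$ and $\delta_i':=\delta_1^{-1}\delta_i$ for $i=1,\dots,k$; then $\alpha'\delta'_{\chi(f)}=\alpha\,\delta_{\chi(f)}$ for every $f$, so $\varphi$ is the composite of $(x,f)\mapsto(\delta'_{\chi(f)}(x),f)$ — an element of $N$, since $\delta_1'=\id$ — followed by $(x,f)\mapsto(\alpha'(x),\beta(f))$. One must check that $(\alpha',\beta)\in{\rm Aut}_{\psi}(Q_1\times Q_2)$, and this is precisely where the defining property of $C$ enters: $\delta_1$ commutes with every $\widehat g=\psi(g)$, whence $\alpha'\widehat g\,\alpha'^{-1}=\alpha\delta_1\widehat g\delta_1^{-1}\alpha^{-1}=\alpha\widehat g\alpha^{-1}=\widehat{\beta(g)}$. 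Thus every generator of $H$ lies in $A_0N$, and therefore $H=\langle A_0,N\rangle$.

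It remains to see that $N$ is normal, that $A_0\cap N=\{1\}$, and to assemble the semidirect product. Each $\beta\in{\rm Aut}(Q_2)$ permutes the orbits $P_1,\dots,P_k$ and hence induces a permutation of $\{1,\dots,k\}$, and the hypothesis $\beta(P_1)=P_1$ says this permutation fixes $1$. Feeding this into the conjugation formula (\ref{conjugationinH}): conjugating $(\xi_1,\dots,\xi_k)\in D$ by a generator $\varphi$ of $H$ yields again an element of $D$ whose component over $P_i$ is a conjugate (by $\alpha\delta_{j}$) of $\xi_{j}$ with $j$ the orbit-index of the $\beta^{-1}$-preimage; since $1$ is fixed, the $P_1$-component of the conjugate is $\alpha\delta_1\xi_1\delta_1^{-1}\alpha^{-1}$, which is $\id$ as soon as $\xi_1=\id$. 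Hence $\varphi N\varphi^{-1}\subseteq N$ for all generators $\varphi$, so $N\trianglelefteq H$ and $H=A_0N$. Finally $A_0\cap N=\{1\}$: an automorphism lying in both has $\beta=\id$ and first coordinate simultaneously equal to $\alpha$ (independent of $f$) and to $\delta_{\chi(f)}$ with $\delta_1=\id$; evaluating on $f\in P_1$ forces $\alpha=\id$ and then all $\delta_i=\id$, since every index $1,\dots,k$ is a value of $\chi$. Altogether $H=N\rtimes A_0\cong C^{k-1}\rtimes{\rm Aut}_{\psi}(Q_1\times Q_2)$. I expect the only genuinely delicate point to be the bookkeeping in this last step: one must track how $\beta$ reindexes the components in (\ref{conjugationinH}) carefully enough to confirm that it is exactly the $P_1$-slot that survives, which is what makes $N$ — rather than some other copy of $C^{k-1}$ inside $D$ — the correct normal complement; the rest is routine in the spirit of Lemma~\ref{easysubgroups} and Proposition~\ref{auto-product-subgroup}.
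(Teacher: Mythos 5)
Your proof is correct and follows essentially the same route as the paper's: you identify the subgroup of tuples with $\delta_1=\id$ (isomorphic to $C^{k-1}$), prove its normality via the conjugation formula (\ref{conjugationinH}) together with the hypothesis $\beta(P_1)=P_1$, and split any $\varphi$ of the form (\ref{specialform}) as $(\alpha\delta_1,\beta)$ composed with $(\id,\delta_1^{-1}\delta_2,\dots,\delta_1^{-1}\delta_k)$, exactly as in the paper. The only difference is that you spell out the verification that $(\alpha\delta_1,\beta)\in{\rm Aut}_{\psi}(Q_1\times Q_2)$ and the triviality of the intersection, which the paper leaves implicit.
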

\begin{proof}From  Lemma~\ref{easysubgroups} we see that the set of automorphisms $(\delta_1,\delta_2,\delta_3,\dots,\delta_k)$ of the form
\begin{equation}\label{directproductaut2}(\delta_1,\dots,\delta_k)(x,f)=(\delta_{\chi(f)}(x),f)
\end{equation}
with $\delta_1=id$ form a subgroup in $H$ isomorphic to $\left(C_{{\rm Aut}(Q_1)}\left(\psi(Q_2)\cup {\rm Inn}(Q_1)\right)\right)^{k-1}$. Denote this subgroup by $A$. Let $\varphi$ be an automorphism from $H$ of the form 
$$\varphi(x,f)=\left(\alpha\delta_{\chi(f)}(x),\beta(f)\right),$$
and $\psi=(\xi_1=id, \xi_2,\dots,\xi_k)$. From (\ref{conjugationinH}) follows that 
\begin{equation}\label{Aisnorm}\varphi\psi\varphi^{-1}(x,f)=\left(\alpha\left(\delta_{\chi(\beta^{-1}(f))}\xi_{\chi(\beta^{-1}(f))}\delta^{-1}_{\chi(\beta^{-1}(f))}\right)\alpha^{-1}(x),f\right)=(\eta_{\chi(f)}(x),f),
\end{equation}
where $\eta_{\chi(f)}=\alpha\left(\delta_{\chi(\beta^{-1}(f))}\xi_{\chi(\beta^{-1}(f))}\delta^{-1}_{\chi(\beta^{-1}(f))}\right)\alpha^{-1}$. If $f\in P_1$, then $\chi(f)=1$, and since $\beta(P_1)=P_1$, we have $\chi(\beta^{-1}(f))=\chi(f)=1$. Thus,
$$\eta_1=\alpha\left(\delta_{1}\xi_{1}\delta^{-1}_{1}\right)\alpha^{-1}=\alpha\left(\delta_{1}(id)\delta^{-1}_{1}\right)\alpha^{-1}=id,$$
and from (\ref{Aisnorm}) follows that $A$ is normal in $H$.

Denote by $B$ the set of automorphisms of $B(Q_1\times_{\psi}Q_2)$ of the form
$$(x,f)\mapsto (\alpha(x),\beta(f))$$
for $(\alpha,\beta)\in {\rm Aut}_{\psi}(Q_1\times Q_2)$. By Lemma~\ref{easysubgroups}, $B$ is a subgroup of $H$  isomorphic to  ${\rm Aut}_{\psi}(Q_1\times Q_2)$. It is clear that $A\cap B=1$. 

If $\varphi$ is an automorphism from $H$ of the form 
$$\varphi(x,f)=(\alpha\delta_{\chi(f)}(x)),\beta(f),$$
for $\delta_1,\delta_2,\dots,\delta_k\in C_{{\rm Aut}(Q_1)}\left(\psi(Q_2)\cup {\rm Inn}(Q_1)\right)$, then $\varphi=\psi_1\psi_2$, where $\psi_1$ is an automorphism from $B$ of the form
$$\psi_1(x,f)=(\alpha\delta_1(x),\beta(f)),$$
and $\psi_2=(id,\delta_1^{-1}\delta_2,\delta_1^{-1}\delta_3,\dots, \delta_1^{-1}\delta_k)$ is an automorphism from $A$. Thus, we obtain $H=AB=A\rtimes B$, and the proof is complete.
\end{proof}

\begin{corollary}\label{subgconnected}If $Q_2$ is connected, then $H={\rm Aut}_{\psi}(Q_1\times Q_2)$.
\end{corollary}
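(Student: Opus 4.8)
The plan is to obtain Corollary~\ref{subgconnected} as the degenerate case $k=1$ of Proposition~\ref{whenorbitfixed}. First I would observe that a connected quandle $Q_2$ consists of a single orbit, so in the notation of Proposition~\ref{auto-product-subgroup} we have $k=1$, $P_1=Q_2$, and the map $\chi\colon Q_2\to\{1\}$ is constant. Next I would check that the hypothesis of Proposition~\ref{whenorbitfixed} is trivially satisfied: since $P_1=Q_2$ and every $\beta\in{\rm Aut}(Q_2)$ is a bijection of $Q_2$, we have $\beta(P_1)=\beta(Q_2)=Q_2=P_1$. Hence Proposition~\ref{whenorbitfixed} applies and gives
$$H=\left(C_{{\rm Aut}(Q_1)}\left(\psi(Q_2)\cup{\rm Inn}(Q_1)\right)\right)^{k-1}\rtimes{\rm Aut}_{\psi}(Q_1\times Q_2).$$
Since $k-1=0$, the first factor is the trivial group, and therefore $H={\rm Aut}_{\psi}(Q_1\times Q_2)$, as claimed.

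Alternatively --- and this is the route I would spell out if a self-contained argument is preferred --- one can argue directly from the definitions. With $k=1$, every generator of $H$ of the form \eqref{specialform} reads $\varphi(x,f)=(\alpha\delta_1(x),\beta(f))$ with $(\alpha,\beta)\in{\rm Aut}_{\psi}(Q_1\times Q_2)$ and $\delta_1\in C_{{\rm Aut}(Q_1)}(\psi(Q_2)\cup{\rm Inn}(Q_1))$. Because $\delta_1$ centralizes $\psi(Q_2)$, for every $f\in Q_2$ one has
$$(\alpha\delta_1)\,\widehat{f}\,(\alpha\delta_1)^{-1}=\alpha\delta_1\widehat{f}\delta_1^{-1}\alpha^{-1}=\alpha\widehat{f}\alpha^{-1}=\widehat{\beta(f)},$$
so $(\alpha\delta_1,\beta)\in{\rm Aut}_{\psi}(Q_1\times Q_2)$; thus each such $\varphi$ already lies in the copy of ${\rm Aut}_{\psi}(Q_1\times Q_2)$ inside $H$ identified in Lemma~\ref{easysubgroups}, which yields $H\leq{\rm Aut}_{\psi}(Q_1\times Q_2)$. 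The reverse inclusion is the first assertion of Lemma~\ref{easysubgroups}.

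There is essentially no obstacle here: the statement is a one-line specialization of Proposition~\ref{whenorbitfixed}. The only point requiring a moment's care is the bookkeeping convention that $\left(C_{{\rm Aut}(Q_1)}(\psi(Q_2)\cup{\rm Inn}(Q_1))\right)^{0}$ is read as the trivial group, which is precisely why the semidirect product collapses onto the single factor ${\rm Aut}_{\psi}(Q_1\times Q_2)$. I would state the proof in the one-line form using Proposition~\ref{whenorbitfixed}, perhaps flagging the direct verification above as the reason the orbit-fixing hypothesis is vacuous in the connected case.
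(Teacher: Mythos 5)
Your main argument --- specializing Proposition~\ref{whenorbitfixed} to the case $k=1$, noting that the single orbit of a connected $Q_2$ is automatically preserved by every automorphism so that the factor $\left(C_{{\rm Aut}(Q_1)}(\psi(Q_2)\cup{\rm Inn}(Q_1))\right)^{k-1}$ is trivial --- is exactly the paper's proof, and it is correct. The alternative direct verification you sketch (absorbing $\delta_1$ into $\alpha$ using that $\delta_1$ centralizes $\psi(Q_2)$, together with Lemma~\ref{easysubgroups} for the reverse inclusion) is also sound, but it adds nothing beyond the one-line specialization.
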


\begin{proof}Since $Q_2$ is connected, it has only one orbit (i.~e. $k=1$) which is, of course, fixed by all automorphisms. From Proposition~\ref{whenorbitfixed} follows that $H={\rm Aut}_{\psi}(Q_1\times Q_2)$. 
\end{proof}

\begin{proposition}\label{shsequence}
There is a short exact sequence of groups
$$1\to C_{{\rm Aut}(Q_1)}\left(\psi(Q_2)\cup {\rm Inn}(Q_1)\right)\to \left(C_{{\rm Aut}(Q_1)}\left(\psi(Q_2)\cup {\rm Inn}(Q_1)\right)\right)^k\rtimes {\rm Aut}_{\psi}(Q_1\times Q_2)\to H\to 1.$$
\end{proposition}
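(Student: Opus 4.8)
Writing $C=C_{{\rm Aut}(Q_1)}(\psi(Q_2)\cup{\rm Inn}(Q_1))$ throughout, the plan is to derive the sequence from the following elementary fact: if $N\trianglelefteq G$ and $K\le G$ satisfy $G=NK$, then the multiplication map $N\rtimes K\to G$ — where $N\rtimes K$ is formed using the conjugation action of $K$ on $N$ — is a surjective homomorphism with kernel isomorphic to $N\cap K$. I would apply this with $G=H$, taking $N=C^k$ realised inside ${\rm Aut}(B(Q_1\times_{\psi}Q_2))$ as the automorphisms $(x,f)\mapsto(\mu_{\chi(f)}(x),f)$ with all $\mu_j\in C$, and $K={\rm Aut}_{\psi}(Q_1\times Q_2)$ realised as the automorphisms $(x,f)\mapsto(\alpha(x),\beta(f))$. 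By Lemma~\ref{easysubgroups}, $N$ and $K$ are subgroups of $H$, $N$ is normal in $H$, and the conjugation action of $K$ on $N$ recorded there — namely $(\alpha,\beta)$ sends the tuple $(\mu_j)_j$ to $(\alpha\mu_{\pi^{-1}(j)}\alpha^{-1})_j$, where $\pi$ is the permutation of $\{1,\dots,k\}$ induced by $\beta$ on the orbits of $Q_2$ — is exactly the one used to form the semidirect product in the statement.

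The one substantive point to check is that $H=NK$. By Proposition~\ref{auto-product-subgroup} an automorphism lies in $H$ precisely when it has the form $(x,f)\mapsto(\alpha\delta_{\chi(f)}(x),\beta(f))$ with $\delta_j\in C$ and $(\alpha,\beta)\in{\rm Aut}_{\psi}(Q_1\times Q_2)$. To factor such an automorphism as $nk$ with $n\in N$, $k\in K$, I would first record that $(\alpha,\beta)\in{\rm Aut}_{\psi}(Q_1\times Q_2)$ forces $\alpha$ to normalise $C$: indeed $\alpha\psi(Q_2)\alpha^{-1}=\{\psi(\beta(f)):f\in Q_2\}=\psi(Q_2)$ since $\beta$ is a bijection of $Q_2$, and $\alpha$ normalises ${\rm Inn}(Q_1)\trianglelefteq{\rm Aut}(Q_1)$, so $\alpha$ normalises the set $\psi(Q_2)\cup{\rm Inn}(Q_1)$ and hence its centraliser $C$. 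Taking $k$ to be $(x,f)\mapsto(\alpha(x),\beta(f))$ and $n$ to be the element of $N$ with tuple $\big(\alpha\delta_{\pi^{-1}(j)}\alpha^{-1}\big)_j$ (whose entries lie in $C$ by the previous sentence), a short computation using $\chi(\beta(f))=\pi(\chi(f))$ shows that $nk$ equals the given automorphism; hence $H=NK$.

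Finally I would identify $N\cap K$. An automorphism lying in both $N$ and $K$ is simultaneously of the form $(x,f)\mapsto(\mu_{\chi(f)}(x),f)$ and $(x,f)\mapsto(\alpha(x),\beta(f))$; comparing the two forces $\beta={\rm id}_{Q_2}$, $\mu_1=\dots=\mu_k=\alpha$ and $\alpha\in C$, and then $(\alpha,{\rm id})\in{\rm Aut}_{\psi}(Q_1\times Q_2)$ holds automatically because $\alpha\in C$ centralises $\psi(Q_2)$. Hence $N\cap K=\{(x,f)\mapsto(\alpha(x),f):\alpha\in C\}$, and $\alpha\mapsto[(x,f)\mapsto(\alpha(x),f)]$ is an isomorphism $C\to N\cap K$. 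The elementary fact then yields the short exact sequence $1\to C\to C^k\rtimes{\rm Aut}_{\psi}(Q_1\times Q_2)\to H\to1$. The main obstacle is organisational rather than conceptual: one has to carry the orbit-permutation $\pi$ cleanly through the factorisation $H=NK$ and make sure the semidirect-product convention matches the conjugation action fixed in Lemma~\ref{easysubgroups}.
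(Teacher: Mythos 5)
Your proposal is correct and follows essentially the same route as the paper: you realise $C^k$ and ${\rm Aut}_{\psi}(Q_1\times Q_2)$ as subgroups of $H$ via Lemma~\ref{easysubgroups}, use the normality of $C^k$, and identify the kernel as the diagonal copy of $C$, exactly as in the paper's proof. The only difference is presentational — where the paper writes down the explicit homomorphism $\eta$ from the semidirect product onto $H$ and computes its kernel directly, you package the same computation as the multiplication map $N\rtimes K\to H$ coming from the factorisation $H=NK$, with the $\pi^{-1}$-twisted tuple and the observation that $\alpha$ normalises $C$ making the verification cleaner.
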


\begin{proof}By Lemma~\ref{easysubgroups} we can think of $\left(C_{{\rm Aut}(Q_1)}\left(\psi(Q_2)\cup {\rm Inn}(Q_1)\right)\right)^k$ as  the set of automorphisms of $B(Q_1\times_{\psi}Q_2)$ of the form (\ref{directproductaut}). If $\delta\in C_{{\rm Aut}(Q_1)}\left(\psi(Q_2)\cup {\rm Inn}(Q_1)\right)$, and $(\alpha,\beta)\in {\rm Aut}_{\psi}(Q_1\times Q_2)$, then a direct check shows that $\alpha\delta\alpha^{-1}\in C_{{\rm Aut}(Q_1)}\left(\psi(Q_2)\cup {\rm Inn}(Q_1)\right)$. For $(\alpha,\beta)$ denote by $\theta_{(\alpha,\beta)}$ the map from $\left(C_{{\rm Aut}(Q_1)}\left(\psi(Q_2)\cup {\rm Inn}(Q_1)\right)\right)^k$ to itself given by the following rule: if $\varphi=(\delta_1,\dots,\delta_k)$ is given by (\ref{directproductaut}), then $\theta_{(\alpha,\beta)}(\varphi)$ has the form
$$\theta_{(\alpha,\beta)}(\varphi)(x,f)=(\alpha\delta_{\beta(f)}\alpha^{-1},f).$$ 
Using direct check it is easy to see that the map $\theta:(\alpha,\beta)\mapsto\theta_{(\alpha,\beta)}$ is a homomorphism of groups
$$\theta:{\rm Aut}_{\psi}(Q_1\times Q_2)\to {\rm Aut}\left(C_{{\rm Aut}(Q_1)}\left(\psi(Q_2)\cup {\rm Inn}(Q_1)\right)\right)^k,$$
and we can define the semidirect product $\left(C_{{\rm Aut}(Q_1)}\left(\psi(Q_2)\cup {\rm Inn}(Q_1)\right)\right)^k\rtimes_{\theta} {\rm Aut}_{\psi}(Q_1\times Q_2)$. Denote by $\eta:\left(C_{{\rm Aut}(Q_1)}\left(\psi(Q_2)\cup {\rm Inn}(Q_1)\right)\right)^k\rtimes_{\theta} {\rm Aut}_{\psi}(Q_1\times Q_2)\to H$ the map given by 
$$\eta((\delta_1,\dots,\delta_n),(\alpha,\beta))=\varphi,$$ 
where $\varphi$ is an automorphism from $H$ of the form 
$$\varphi(x,f)=(\alpha\delta_{\chi(f)}(x),\beta(f)).$$
Using direct check it is easy to see that $\eta$ is a homomorphism. The kernel of this homomorphism is the set of elements $((\delta_1,\dots,\delta_n),(\alpha,\beta))$ such that 
$$(\alpha\delta_{\chi(f)}(x),\beta(f))=(x,f)$$
for all $x\in Q_1$, $f\in Q_2$. It means that $\beta=id$, $\alpha\in C_{{\rm Aut}(Q_1)}(\psi(Q_2)\cup {\rm Inn}(Q_1))$, and $\delta_i=\alpha^{-1}$ for all $i=1,\dots,k$. Thus,
$${\rm Ker}(\theta)=\{((\delta,\dots,\delta),~(\delta^{-1},id))~|~\delta\in C_{{\rm Aut}(Q_1)}(\psi(Q_2)\cup {\rm Inn}(Q_1))\}=C_{{\rm Aut}(Q_1)}(\psi(Q_2)\cup {\rm Inn}(Q_1)),$$
which completes the proof.
\end{proof}
\begin{corollary}\label{faithfulaut}If $Q_1$ is faithful, then $H={\rm Aut}_{\psi}(Q_1\times Q_2)$.
\end{corollary}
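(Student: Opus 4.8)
The plan is to show that faithfulness of $Q_1$ forces the group $C_{{\rm Aut}(Q_1)}\left(\psi(Q_2)\cup {\rm Inn}(Q_1)\right)$ to be trivial, after which the claim drops out immediately, either from the short exact sequence of Proposition~\ref{shsequence} or directly from the shape~\eqref{specialform} of the generators of $H$.

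First I would invoke the standard naturality identity $\delta S_x\delta^{-1}=S_{\delta(x)}$, valid for every automorphism $\delta$ of a quandle $Q_1$ and every $x\in Q_1$ (this is just how inner automorphisms transform under conjugation, cf.\ \eqref{innerrules}, applied to the automorphism $\delta$ in place of an inner one). Now take $\delta\in C_{{\rm Aut}(Q_1)}\left(\psi(Q_2)\cup {\rm Inn}(Q_1)\right)$. Since $\delta$ centralizes ${\rm Inn}(Q_1)$, we get $S_{\delta(x)}=\delta S_x\delta^{-1}=S_x$ for all $x\in Q_1$. Because $Q_1$ is faithful, the map $x\mapsto S_x$ is injective, hence $\delta(x)=x$ for all $x$, that is, $\delta={\rm id}$. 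Thus $C_{{\rm Aut}(Q_1)}\left(\psi(Q_2)\cup {\rm Inn}(Q_1)\right)=\{{\rm id}\}$.

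Finally I would feed this into Proposition~\ref{shsequence}: the kernel and the first factor of the middle term are both trivial, so the middle group $\left(C_{{\rm Aut}(Q_1)}\left(\psi(Q_2)\cup {\rm Inn}(Q_1)\right)\right)^k\rtimes {\rm Aut}_{\psi}(Q_1\times Q_2)$ collapses to ${\rm Aut}_{\psi}(Q_1\times Q_2)$ and maps isomorphically onto $H$, giving $H={\rm Aut}_{\psi}(Q_1\times Q_2)$. Equivalently, and perhaps cleaner to write out: once every $\delta_{\chi(f)}$ in~\eqref{specialform} is forced to be the identity, each generator of $H$ has the form $(x,f)\mapsto(\alpha(x),\beta(f))$ with $(\alpha,\beta)\in{\rm Aut}_{\psi}(Q_1\times Q_2)$, so $H\subseteq {\rm Aut}_{\psi}(Q_1\times Q_2)$, while the reverse inclusion is part of Lemma~\ref{easysubgroups}. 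The only substantive point in the whole argument is the triviality of the centralizer of ${\rm Inn}(Q_1)$ in ${\rm Aut}(Q_1)$ under the faithfulness hypothesis; the rest is bookkeeping with the earlier results.
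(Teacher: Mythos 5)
Your proof is correct and follows essentially the same route as the paper: faithfulness of $Q_1$ forces the centralizer of ${\rm Inn}(Q_1)$ in ${\rm Aut}(Q_1)$ (hence the possibly smaller group $C_{{\rm Aut}(Q_1)}\left(\psi(Q_2)\cup {\rm Inn}(Q_1)\right)$) to be trivial via $S_{\delta(x)}=\delta S_x\delta^{-1}=S_x$, after which Proposition~\ref{shsequence} yields $H={\rm Aut}_{\psi}(Q_1\times Q_2)$. Your alternative phrasing via the generators of the form \eqref{specialform} is just a restatement of the same collapse and is equally fine.
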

\begin{proof}If $\delta\in C_{{\rm Aut}(Q_1)}({\rm Inn}(Q_1))$, then for $x\in Q_1$ we have $S_x=\delta S_x\delta^{-1}=S_{\delta(x)}$. Since $Q_1$ is faithful, we get $\delta(x)=x$. Further, since $x$ is an arbitrary element of $Q_1$, we conclude that $\delta=id$. Therefore $C_{{\rm Aut}(Q_1)}({\rm Inn}(Q_1))$ is trivial, and from Proposition~\ref{shsequence} follows that $H={\rm Aut}_{\psi}(Q_1\times Q_2)$.
\end{proof}

The following result says that in some cases the group $H$ coincides with the whole group ${\rm Aut}(B(Q_1\times_{\psi}Q_2))$.

\begin{theorem}\label{automconprod}
Let $Q_1$, $Q_2$ be finite quandles, such that $Q_1$ is connected, and $id\in \psi(Q_2)$. Then $H={\rm Aut}(B(Q_1\times_{\psi}Q_2))$.
\end{theorem}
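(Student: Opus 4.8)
The plan is to prove the non-trivial inclusion $\Aut\big(B(Q_1\times_\psi Q_2)\big)\subseteq H$; the reverse inclusion is Proposition~\ref{auto-product-subgroup} together with the definition of $H$. Write $B=B(Q_1\times_\psi Q_2)$ and $Q=\mathcal{Q}(B)$, and recall from Section~\ref{newproductofquandles} that $Q=(X_1\times X_2,*)$ with $(x,a)*(y,b)=(x*_1 y,\,a*_2^{-1}b)$, and that $B$ is recovered from $Q$, in the sense of Theorem~\ref{horvatMain}, via the biquandle structure $\beta_{(y,b)}(x,a)=\big(\widehat{b}(x),\,a*_2 b\big)$, where $\widehat{b}=\psi(b)$. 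The key bookkeeping point is that $\beta_{(y,b)}$ depends only on the second coordinate $b$. By Theorem~\ref{horvatMain}, a bijection $\varphi$ of $X_1\times X_2$ lies in $\Aut(B)$ precisely when $\varphi\in\Aut(Q)$ and $\varphi\beta_z\varphi^{-1}=\beta_{\varphi(z)}$ for all $z\in Q$; I will use this criterion throughout, so fix $\varphi\in\Aut(B)$.

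The first and main step is to show that $\varphi$ respects the \emph{slices} $X_1\times\{f\}$, i.e. that the second coordinate of $\varphi(x,f)$ is independent of $x$; equivalently $\varphi\big(X_1\times\{f\}\big)=X_1\times\{\beta(f)\}$ for a bijection $\beta$ of $X_2$. Both hypotheses are used here. Since $\id\in\psi(Q_2)$ there is $e\in X_2$ with $\widehat{e}=\id$, and one checks, using that $\psi$ is a quandle homomorphism into $\Conj_{-1}(\Aut(Q_1))$, that $Z=\{b\in X_2:\widehat{b}=\id\}$ is a non-empty union of orbits of $Q_2$; for $b\in Z$ the map $\beta_{(y,b)}=\id_{X_1}\times S_b$ fixes the first coordinate, so the sub-biquandle $X_1\times\{e\}$ of $B$ is isomorphic to $\mathcal{B}(Q_1)$ (on it $\overline{*}$ is trivial), which furnishes an intrinsic anchor for the $Q_1$-direction. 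Since $\varphi$ must carry the family $\{\beta_z\}$ to itself by conjugation and each $\beta_z$ sees only the second coordinate, and since $Q_1$ is connected, $\varphi$ cannot mix the two factors and the claim follows. Being an automorphism of $Q$, the induced map $\beta$ on second coordinates is then an automorphism of $(X_2,*_2^{-1})$, hence $\beta\in\Aut(Q_2)$, and $\beta$ permutes the orbits $P_1,\dots,P_k$; write $\varphi(x,f)=(\gamma_f(x),\beta(f))$ with each $\gamma_f$ a bijection of $X_1$.

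It remains to pin down $\gamma_f$ and $\beta$. Substituting $\varphi(x,f)=(\gamma_f(x),\beta(f))$ into $\varphi\big((x,f)*(y,g)\big)=\varphi(x,f)*\varphi(y,g)$ gives, on first coordinates, $\gamma_{f*_2^{-1}g}(x*_1 y)=\gamma_f(x)*_1\gamma_g(y)$; taking $y=x$ and $g=f$ shows $\gamma_f\in\Aut(Q_1)$. Substituting into $\varphi\beta_{(y,g)}\varphi^{-1}=\beta_{\varphi(y,g)}$ gives $\gamma_{f*_2 g}\,\widehat{g}\,\gamma_f^{-1}=\widehat{\beta(g)}$ for all $f,g$, and $g=f$ yields $\widehat{\beta(f)}=\gamma_f\,\widehat{f}\,\gamma_f^{-1}$. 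Combining these identities — the same computations that underlie Propositions~\ref{auto-product-subgroup}--\ref{shsequence} — forces $\gamma_f$ to depend on $f$ only through its orbit, so choosing base points $f_i\in P_i$ and setting $\alpha:=\gamma_{f_1}$, $\delta_i:=\alpha^{-1}\gamma_{f_i}$ one obtains $\gamma_f=\alpha\,\delta_{\chi(f)}$ with $\delta_i\in C_{\Aut(Q_1)}(\psi(Q_2)\cup\Inn(Q_1))$ (the $\Inn(Q_1)$ part being exactly the source of the freedom in the $\delta_i$, cf. Corollary~\ref{faithfulaut}), while $\widehat{\beta(g)}=\alpha\,\widehat{g}\,\alpha^{-1}$ says $(\alpha,\beta)\in\Aut_{\psi}(Q_1\times Q_2)$. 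Thus $\varphi(x,f)=\big(\alpha\,\delta_{\chi(f)}(x),\beta(f)\big)$, exactly the form \eqref{specialform}; hence $\varphi\in H$, so $\Aut(B)=H$.

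The main obstacle is the slice-preservation step. It genuinely requires both hypotheses: without connectedness of $Q_1$ the product quandle $Q=\mathcal{Q}(B)$ admits diagonal automorphisms mixing the two factors (already for ${\rm R}_3\times{\rm R}_3$), and without $\id\in\psi(Q_2)$ one loses the sub-biquandle $\mathcal{B}(Q_1)$ over $e$ that rigidifies the first coordinate inside $B$. The delicate point is to upgrade the easy observation that $\varphi$ permutes the orbits $X_1\times P_i$ of $Q$ to the assertion that it preserves the finer partition into slices; this is where one must exploit both that the biquandle structure maps of $B$ are constant along slices and that a connected quandle cannot sit inside $B$ transverse to the slices. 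Once slice-preservation is established, the remaining bookkeeping is routine, parallel to computations already performed in this section.
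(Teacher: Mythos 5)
Your skeleton (first show that $\varphi$ preserves the slices $X_1\times\{f\}$, then pin down the resulting maps $\gamma_f$ and $\beta$) is the same as the paper's, and your setup via Theorem~\ref{horvatMain} and the two identities $\gamma_{f*_2^{-1}g}(x*_1y)=\gamma_f(x)*_1\gamma_g(y)$ and $\widehat{\beta(g)}=\gamma_{f*_2g}\,\widehat{g}\,\gamma_f^{-1}$ is correct. But both pivotal steps are asserted rather than proved. For slice preservation you say that the sub-biquandle $X_1\times\{e\}$ is an ``intrinsic anchor'' and that since the $\beta_z$ see only the second coordinate and $Q_1$ is connected, ``$\varphi$ cannot mix the two factors and the claim follows'' --- and you yourself flag this as the delicate point without resolving it. Note that $\varphi$ need not preserve the sub-biquandle $X_1\times\{e\}$ (there may be several $b$ with $\widehat{b}=\id$, and nothing yet ties $\varphi$ to that slice), so the anchor alone gives nothing. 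The paper proves this step by a concrete trick using both hypotheses at once: pick $h$ with $\widehat{h}=\id$; by connectedness write any $z\in Q_1$ as $z=x*_1x_1*_1\cdots*_1x_n$, observe $(z,f)=(x,f)\underline{*}(x_1,h)\underline{*}\cdots\underline{*}(x_n,h)$, apply $\varphi$, and use that $\underline{*}$ never changes the second coordinate to conclude that the second coordinate of $\varphi(z,f)$ equals that of $\varphi(x,f)$. Some argument of this kind is needed; your proposal does not supply one.

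The second gap is the claim that ``combining these identities --- the same computations that underlie Propositions~\ref{auto-product-subgroup}--\ref{shsequence} --- forces'' $\gamma_f=\alpha\,\delta_{\chi(f)}$ with $\delta_{\chi(f)}\in C_{\Aut(Q_1)}(\psi(Q_2)\cup\Inn(Q_1))$ and $(\alpha,\beta)\in{\rm Aut}_{\psi}(Q_1\times Q_2)$. Those propositions go in the opposite direction (they verify that maps of the special form are automorphisms and analyse $H$); they contain no such derivation, and this deduction is roughly half of the paper's proof. There one again uses the element $h$: from $\widehat{\beta(h)}=\alpha_h\widehat{h}\alpha_h^{-1}=\id$ and a computation with $(x,f)\underline{*}(y,h)$ one gets $x*_1\alpha_f(y)=x*_1\alpha_h(y)$ for all $x,y$; a computation with $(y,h)\underline{*}(x,f)$ gives $\alpha_h\widehat{f}\alpha_h^{-1}=\alpha_f\widehat{f}\alpha_f^{-1}$, so $\delta_f:=\alpha_h^{-1}\alpha_f$ commutes with $\widehat{f}$ and lies in $C_{\Aut(Q_1)}(\Inn(Q_1))$; a mixed $\underline{*}$ computation then yields $\delta_f\in C_{\Aut(Q_1)}(\psi(Q_2))$, and an $\overline{*}$ computation yields $\delta_{f*_2g}=\delta_f$, i.e.\ constancy on orbits. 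None of this follows formally from the two identities you wrote down without further work, and your proposal omits it entirely. (A smaller slip: specializing your first identity at $y=x$ and $g=f$ gives only a tautology; to see that $\gamma_f$ is a quandle endomorphism you should set $g=f$ with $y$ arbitrary, and bijectivity needs a separate word, e.g.\ finiteness plus injectivity of $\varphi$ on the slice.)
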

\begin{proof}Let $h$ be an element from $Q_2$ such that $\widehat{h}=id$ (it exists since $id\in\psi(Q_2)$), and let $\varphi$ be an arbitrary automorphism of ${\rm Aut}(B(Q_1\times_{\psi}Q_2))$. Let us prove that $\varphi$ belongs to $H$.

Let $x\in Q_1$, $f\in Q_2$, and $\varphi(x,f)=(y,g)$ for some $y\in Q_1$, $g\in Q_2$. Since $Q_1$  is connected, for every $z\in Q_1$ there  exist some elements $x_1,x_2,\dots,x_n$ such that $z=x*_1x_1*_1x_2*_1\dots*_1x_n$. Since $\widehat{h}=id$, we have
$$(x,f)\underline{*}(x_1,h)=(\widehat{h}(x*_1x_1),f)=(x*_1x,f).$$
In a similar way it is easy to see that
$$(x,f)\underline{*}(x_1,h)\underline{*}(x_2,h)\underline{*}\cdots\underline{*}(x_n,h)=(x*_1x_1*_1x_2*_1\dots*_1x_n,f)=(z,f).$$
Acting to this equality by $\varphi$, we have 
\begin{equation}\label{reduceto}
\varphi(z,f)=(y,g)\underline{*}(y_1,g_1)\underline{*}(y_2,g_2)\underline{*}\dots\underline{*}(y_n,g_n),
\end{equation}
where  $(y_i,g_i)=\varphi(x_i,h)$. By definition of the operation $\underline{*}$, we see that equality (\ref{reduceto}) implies that 
$\varphi(z,f)=(t,g)$ for some $t\in Q_1$. Thus, the second coordinate of $\varphi(z,f)$ depends only on the second coordinate of the argument, i.~e. we can write 
\begin{equation}\label{startingpoint}
\varphi(x,f)=(\alpha_f(x),\beta(f)).
\end{equation}
Let us prove that $\alpha_f\in {\rm Aut}(Q_1)$ for all $f\in Q_2$, and $\beta\in {\rm Aut}(Q_2)$. At first, let us prove that $\beta\in {\rm Aut}(Q_2)$. For arbitrary $x,y\in Q_1$, $f,g\in Q_2$, we have
\begin{align}
\notag \varphi((x,f)\overline{*}(y,g))&=\varphi(\widehat{g}(x),f*_2g)=(\alpha_{f*_2g}\widehat{g}(x),\beta(f*_2g)),\\
\notag\varphi(x,f)\overline{*}\varphi(y,g)&=(\alpha_f(x),\beta(f))\overline{*}(\alpha_g(y),\beta(g))=(\widehat{\beta(g)}\alpha_f(x),\beta(f)*_2\beta(g)).
\end{align}
Therefore $\beta(f*_2g)=\beta(f)*_2\beta(g)$, i.~e. $\beta$ is an endomorphism of $Q_2$. Let $y\in Q_1$, $g\in Q_2$ be arbitrary elements. Since $\varphi$ is an automorphism of $B(Q_1\times_{\psi}Q_2)$, there exists $(x,f)$ such that $(\alpha_f(x),\beta(f))=\varphi(x,f)=(y,g)$. Thus, $\beta(f)=g$, i.~e. $\beta$ is surjective, and since $Q_2$ is finite, $\beta:Q_2\to Q_2$ is a bijective homomorphism, i.~e. an automorphism of $Q_2$.

Let us prove that $\alpha_f\in{\rm Aut}(Q_1)$ for all $f\in Q_2$. Let $x,y\in Q_1$, then 
\begin{align}
\notag\varphi((x,f)\underline{*}(y,f))&=\varphi(\widehat{f}(x*_1y),f)=(\alpha_f\widehat{f}(x*_1y),\beta(f))\\
\notag\varphi(x,f)\underline{*}\varphi(y,f)&=(\alpha_f(x),\beta(f))\underline{*}(\alpha_f(y),\beta(f))=(\widehat{\beta(f)}(\alpha_f(x)*_1\alpha_f(y)),\beta(f))
\end{align}
Thus, for all $x,y\in Q_1$, we have 
\begin{equation}\label{lem1equ1}
\alpha_f\widehat{f}(x*_1y)=\widehat{\beta(f)}\left(\alpha_f(x)*_1\alpha_f(y)\right).
\end{equation}
From the other side 
\begin{align}
\notag\varphi((x,f)\overline{*}(y,f))&=\varphi(\widehat{f}(x),f)=(\alpha_f\widehat{f}(x),\beta(f))\\
\notag\varphi(x,f)\overline{*}\varphi(y,f)&=(\alpha_f(x),\beta(f))\underline{*}(\alpha_f(y),\beta(f))=(\widehat{\beta(f)}\alpha_f(x),\beta(f))
\end{align}
Thus, for all $x,y\in Q_1$, we have 
\begin{equation}\label{lem1equ2}
\alpha_f\widehat{f}(x)=\widehat{\beta(f)}\alpha_f(x)
\end{equation}
From equalities (\ref{lem1equ1}) and (\ref{lem1equ2}) follows that 
$$\widehat{\beta(f)}\alpha_f(x*_1y)=\widehat{\beta(f)}\left(\alpha_f(x)*_1\alpha_f(y)\right).
$$
Since $\widehat{\beta(f)}$ is an automorphism of $Q_1$, we have 
$$\alpha_f(x*_1y)=\alpha_f(x)*_1\alpha_f(y),$$
i.~e. $\alpha_f$ is an endomorphism of $Q_1$.  Let us prove that $\alpha_f$ is bijective. Since $Q_1$ is finite, it is enough to prove that $\alpha_f$ is injective. Let $x,y\in Q_1$ be such that $\alpha_f(x)=\alpha_f(y)$. Then 
$$
\varphi(x,f)=(\alpha_f(x),\beta(f))=(\alpha_f(y),\beta(f))=\varphi(y,f)$$
and since $\varphi$ is an automorphism of ${\rm Aut}(B(Q_1\times_{\psi}Q_2))$, it is injective, and therefore $x=y$. Thus, $\alpha_f$ is an automorphism of $Q_1$. From equality (\ref{lem1equ2}) follows that 
\begin{equation}\label{conjrule}
\widehat{\beta(f)}=\alpha_f\widehat{f}\alpha_f^{-1}.
\end{equation} 
Since $\varphi$ is an automorphism of $B(Q_1\times_{\psi}Q_2)$, for $x,y\in Q_1$, $f\in Q_2$, we have
\begin{align}\notag \varphi((x,f)\underline{*}(y,h))&=\varphi(x*_1y,f)=(\alpha_f(x*_1y),\beta(f)),\\
\notag \varphi(x,f)\underline{*}\varphi(y,h)&=(\alpha_f(x),\beta(f))\underline{*}(\alpha_{h}(y),\beta(h))=\left(\widehat{{\beta(h)}}(\alpha_f(x)*_1\alpha_{h}(y)),\beta(f)\right).
\end{align}
From this equality follows that for all $x,y\in Q_1$, $f\in Q_2$ there is an equality 
\begin{equation}\label{trivactth}
\alpha_f(x*_1y)=\widehat{{\beta(h)}}(\alpha_f(x)*_1\alpha_{h}(y)).
\end{equation}
From equality (\ref{conjrule}) follows that $\widehat{{\beta(h)}}=\alpha_{h}\widehat{h}\alpha_{h}^{-1}=\alpha_{h}(id)\alpha_{h}^{-1}=id$.  Therefore equality (\ref{trivactth}) implies that for all $x,y\in Q_1$, $f\in Q_2$ the equality
$\alpha_f(x)*_1\alpha_f(y)=\alpha_f(x)*_1\alpha_{h}(y)$ holds. Since $x,y$ are arbitrary, this equality can be rewritten as
\begin{equation}\label{trivactth2}
x*_1\alpha_f(y)=x*_1\alpha_{h}(y).
\end{equation}
For arbitrary $x,y\in Q_1$, $f\in Q_2$, we have
\begin{align}\notag \varphi((y,h)\underline{*}(x,f))&=\varphi(\widehat{f}(y*_1x),h)=(\alpha_{h}\widehat{f}(y*_1x),\beta(h)),\\
\notag \varphi(y,h)\underline{*}\varphi(x,f)&=(\alpha_{h}(y),\beta(h))\underline{*}(\alpha_f(x),\beta(f))=(\widehat{\beta(f)}(\alpha_{h}(y)*_1\alpha_f(x)),\beta(h))\\
\notag&=(\alpha_f\widehat{f}\alpha_f^{-1}\alpha_{h}(y*_1x),\beta(h)),~~\text{using}~(\ref{conjrule})~\text{and}~(\ref{trivactth2}).
\end{align}
From this equality follows that $\alpha_{h}\widehat{f}=\alpha_f\widehat{f}\alpha_f^{-1}\alpha_{h}$ or $\alpha_{h}\widehat{f}\alpha_{h}^{-1}=\alpha_f\widehat{f}\alpha_f^{-1}$. This equality implies that $\alpha_f=\alpha_{h}\delta_f$, where $\delta_f$ commutes with $\widehat{f}$. Denote by $\alpha=\alpha_h$. From equality (\ref{conjrule}) follows that 
$$\widehat{\beta(f)}=\alpha_f\widehat{f}\alpha_f^{-1}=\alpha\delta_f\widehat{f}\delta_f^{-1}\alpha^{-1}=\alpha\widehat{f}\alpha^{-1},$$
for all $f\in Q_2$, and hence $(\alpha,\beta)\in {\rm Aut}_{\psi}(Q_1\times Q_2)$. From equality (\ref{trivactth2}) follows that the equality 
\begin{equation}\label{trivactth4}
x*_1\alpha\delta_f(y)=x*_1\alpha(y)
\end{equation}
holds for all $x,y\in Q_1$. Since $x$ is an arbitrary element, from equality (\ref{trivactth4}) we conclude that 
$$\delta_fS_y\delta_f^{-1}(x)=S_{\delta_f(y)}(x)=x*_1\delta_f(y)=x*_1y=S_y(x),$$
i.~e. $\delta_fS_y\delta_f^{-1}=S_y$ and $\delta_f\in C_{{\rm Aut}(Q_1)}({\rm Inn}(Q_1))$. Summarizing the preceding discussion, we can rewrite (\ref{startingpoint}) in the following way
\begin{equation}\label{better2}
\varphi(x,f)=(\alpha\delta_f(x),\beta(f)),
\end{equation}
where $(\alpha,\beta)\in {\rm Aut}_{\psi}(Q_1\times Q_2)$, and hence $\delta_f\in C_{{\rm Aut}(Q_1)}({\rm Inn}(Q))$ is such that $\delta_f$ commutes with $\widehat{f}$.

For arbitrary $x,y\in Q_1$, $f,g\in Q_2$, we have
\begin{align}
\notag \varphi((x,f)\underline{*}(y,g))&=\varphi(\widehat{g}(x*_1y),f)=(\alpha\delta_f\widehat{g}(x*_1y),\beta(f)),\\
\notag \varphi(x,f)\underline{*}\varphi(y,g)&=(\alpha\delta_f(x),\beta(f))\underline{*}(\alpha\delta_g(y),\beta(g))=(\widehat{\beta(g)}(\alpha\delta_f(x)*_1\alpha\delta_g(y)),\beta(f))\\
\notag&=(\alpha\widehat{g}(\delta_f(x)*_1\delta_g(y)),\beta(f)),~\text{since}~(\alpha,\beta)\in {\rm Aut}_{\psi}(Q_1\times Q_2)\\
\notag&=(\alpha\widehat{g}(\delta_f(x)*_1\delta_f(y)),\beta(f)),~\text{since}~\delta_f\in C_{{\rm Aut}(Q_1)}({\rm Inn}(Q_1)).
\end{align}
These equalities imply that $\alpha\delta_f\widehat{g}=\alpha\widehat{g}\delta_f$, or $\delta_f\widehat{g}=\widehat{g}\delta_f$. Since $f$ and $g$ are arbitrary, we must have $\delta_f\in C_{{\rm Aut}(Q_1)}(\psi(Q_2))$. So, we conclude that $\delta_f\in C_{{\rm Aut}(Q_1)}({\rm Inn}(Q_1)\cup\psi(Q_2))$. 

Finally, for arbitrary $x,y\in Q_1$, $f,g\in Q_2$
\begin{align}
\notag \varphi((x,f)\overline{*}(y,g))&=\varphi(\widehat{g}(x),f*_2g)=\left(\alpha\delta_{f*_2g}\widehat{g}(x),\beta(f*_2g)\right),\\
\notag\varphi(x,f)\overline{*}\varphi(y,g)&=(\alpha\delta_f(x),\beta(f))\overline{*}(\alpha\delta_g(y),\beta(g))=(\widehat{\beta(g)}\alpha\delta_f(x),\beta(f)*_2\beta(g))\\
\notag&=(\alpha\widehat{g}\delta_f(x),\beta(f)*_2\beta(g)),~\text{since}~\delta_f\in C_{{\rm Aut}(Q_1)}({\rm Inn}(Q_1)).
\end{align}
From these equalities follows that $\delta_{f*_2g}=\delta_f$ for all $f,g\in Q_2$. Therefore $\delta_{f_1}=\delta_{f_2}$ for all $f_1,f_2$ which belong to the same orbit of $Q_2$, i.~e. $\delta_f$ depends not on $f$ but on the orbit of $f$, and we can write $\delta_{\chi(f)}$ instead of $\delta_f$. Thus, equality (\ref{better2}) can be now rewritten as 
$$
\varphi(x,f)=(\alpha\delta_{\chi(f)}(x),\beta(f)),
$$
where $(\alpha,\beta)\in {\rm Aut}_{\psi}(Q_1\times Q_2)$, and $\delta_{\chi(f)}\in C_{{\rm Aut}(Q_1)}({\rm Inn}(Q)\cup\psi(Q_2))$. 
\end{proof}
Theorem~\ref{automconprod} and Corollary~\ref{subgconnected} imply the following statement.
\begin{corollary}\label{genhorv1}Let $Q_1$, $Q_2$ be finite connected quandles, and $\psi:Q_2\to {\rm Conj}_{-1}({\rm Aut}(Q_1))$ be a homomorphism such that $id\in \psi(Q_2)$. Then ${\rm Aut}(B(Q_1\times_{\psi}Q_2))={\rm Aut}_{\psi}(Q_1\times Q_2)$.
\end{corollary}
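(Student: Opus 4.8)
The plan is to derive this corollary directly by chaining the two immediately preceding results, since all the real work has already been done in Theorem~\ref{automconprod} and Corollary~\ref{subgconnected}. Throughout, let $H$ denote the subgroup of ${\rm Aut}(B(Q_1\times_{\psi}Q_2))$ generated by the automorphisms of the special form \eqref{specialform}, as in Proposition~\ref{auto-product-subgroup} and the discussion following it.

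First I would check that the hypotheses of both results are met: we are assuming $Q_1$ and $Q_2$ are finite connected quandles and that $\psi:Q_2\to {\rm Conj}_{-1}({\rm Aut}(Q_1))$ is a homomorphism with $id\in\psi(Q_2)$. In particular $Q_1$ is finite and connected and $id\in\psi(Q_2)$, so Theorem~\ref{automconprod} applies and gives $H={\rm Aut}(B(Q_1\times_{\psi}Q_2))$. Separately, $Q_2$ is connected, so Corollary~\ref{subgconnected} applies and gives $H={\rm Aut}_{\psi}(Q_1\times Q_2)$ (the point there being that $k=1$, the unique orbit is fixed by every automorphism of $Q_2$, and hence the factor $\left(C_{{\rm Aut}(Q_1)}(\psi(Q_2)\cup {\rm Inn}(Q_1))\right)^{k-1}$ in Proposition~\ref{whenorbitfixed} is trivial).

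Then the conclusion follows by combining the two displayed equalities:
$$
{\rm Aut}(B(Q_1\times_{\psi}Q_2)) = H = {\rm Aut}_{\psi}(Q_1\times Q_2),
$$
which is exactly the assertion. There is essentially no obstacle here beyond verifying the hypotheses: the substantive content lies in Theorem~\ref{automconprod}, whose proof establishes that an arbitrary automorphism $\varphi$ of $B(Q_1\times_{\psi}Q_2)$ necessarily has the form $\varphi(x,f)=(\alpha\delta_{\chi(f)}(x),\beta(f))$ with $(\alpha,\beta)\in{\rm Aut}_{\psi}(Q_1\times Q_2)$ and $\delta_{\chi(f)}\in C_{{\rm Aut}(Q_1)}({\rm Inn}(Q_1)\cup\psi(Q_2))$, and in Corollary~\ref{subgconnected}'s reduction of $H$ to ${\rm Aut}_{\psi}(Q_1\times Q_2)$ under connectedness of $Q_2$. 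If one wanted to be slightly more explicit, one could also note that connectedness of $Q_2$ forces $\chi$ to be constant, so that in the description of $\varphi$ above the single automorphism $\delta:=\delta_{\chi(f)}$ can be absorbed into $\alpha$, recovering $\varphi(x,f)=(\alpha'(x),\beta(f))$ with $(\alpha',\beta)\in{\rm Aut}_{\psi}(Q_1\times Q_2)$; but invoking the two prior statements is cleaner.
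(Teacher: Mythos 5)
Your proposal is correct and matches the paper's argument exactly: the paper also obtains this corollary by combining Theorem~\ref{automconprod} (which gives $H={\rm Aut}(B(Q_1\times_{\psi}Q_2))$ since $Q_1$ is finite connected and $id\in\psi(Q_2)$) with Corollary~\ref{subgconnected} (which gives $H={\rm Aut}_{\psi}(Q_1\times Q_2)$ since $Q_2$ is connected). Your hypothesis check and the optional remark about absorbing $\delta$ into $\alpha$ are consistent with, and slightly more explicit than, the paper's one-line deduction.
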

In particular case when $\psi(Q_2)=\{id\}$, we have ${\rm Aut}_{\psi}(Q_1\times Q_2)={\rm Aut}(Q_1)\times {\rm Aut}(Q_2)$, and Corollary~\ref{genhorv1} imply the result \cite[Proposition~5.3]{Horvat} formulated in  Proposition~\ref{classicalhorvataut}. Theorem~\ref{automconprod} and Corollary~\ref{faithfulaut}  imply the following statement. 
\begin{corollary}\label{genhorv2}Let $Q_1$, $Q_2$ be finite quandles, and $\psi:Q_2\to {\rm Conj}_{-1}({\rm Aut}(Q_1))$ be a homomorphism such that $id\in\psi(Q_2)$. If $Q_1$ is a faithful connected quandle, then ${\rm Aut}(B(Q_1\times_{\psi}Q_2))={\rm Aut}_{\psi}(Q_1\times Q_2)$.
\end{corollary}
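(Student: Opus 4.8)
The plan is to obtain this corollary with no new computation, as a direct combination of two results already established above: Theorem~\ref{automconprod} and Corollary~\ref{faithfulaut}. First I would check that the hypotheses of Theorem~\ref{automconprod} hold in the present setting: $Q_1$ and $Q_2$ are finite quandles, $Q_1$ is connected (being a faithful \emph{connected} quandle), and $id\in\psi(Q_2)$ by assumption. Theorem~\ref{automconprod} therefore gives the equality ${\rm Aut}(B(Q_1\times_{\psi}Q_2))=H$, where, as in Subsection~\ref{sec-product-biquandle}, $H$ denotes the subgroup of ${\rm Aut}(B(Q_1\times_{\psi}Q_2))$ generated by all automorphisms of the normal form \eqref{specialform}.

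Next I would bring in the faithfulness of $Q_1$. If $\delta\in C_{{\rm Aut}(Q_1)}({\rm Inn}(Q_1))$, then for every $x\in Q_1$ one has $S_x=\delta S_x\delta^{-1}=S_{\delta(x)}$, and faithfulness forces $\delta(x)=x$; since $x$ is arbitrary, $\delta=id$. Hence $C_{{\rm Aut}(Q_1)}({\rm Inn}(Q_1))$ is trivial, and so is the smaller group $C_{{\rm Aut}(Q_1)}\!\left(\psi(Q_2)\cup {\rm Inn}(Q_1)\right)$ that occurs as the kernel in the short exact sequence of Proposition~\ref{shsequence}. Feeding this triviality into that sequence makes the surjection $\eta$ constructed in the proof of Proposition~\ref{shsequence} an isomorphism, so that $H={\rm Aut}_{\psi}(Q_1\times Q_2)$; this is precisely Corollary~\ref{faithfulaut}, which I would simply cite.

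Combining the two displayed equalities, ${\rm Aut}(B(Q_1\times_{\psi}Q_2))=H$ and $H={\rm Aut}_{\psi}(Q_1\times Q_2)$, yields the assertion. There is no genuine obstacle in this last step: all the substantive work has been done earlier, in the structural analysis of $H$ (Lemma~\ref{easysubgroups}, Proposition~\ref{whenorbitfixed}, Proposition~\ref{shsequence}) and in the verification in Theorem~\ref{automconprod} that every automorphism of $B(Q_1\times_{\psi}Q_2)$ has the form \eqref{specialform}. The one point not to overlook is that faithfulness is used exactly to collapse the centraliser $C_{{\rm Aut}(Q_1)}({\rm Inn}(Q_1))$, which is what degenerates the semidirect-product description of $H$ to the clean answer ${\rm Aut}_{\psi}(Q_1\times Q_2)$.
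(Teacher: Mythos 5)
Your argument is correct and is exactly the paper's route: the paper derives this corollary by combining Theorem~\ref{automconprod} (giving ${\rm Aut}(B(Q_1\times_{\psi}Q_2))=H$ under finiteness, connectedness of $Q_1$, and $id\in\psi(Q_2)$) with Corollary~\ref{faithfulaut} (faithfulness collapses $C_{{\rm Aut}(Q_1)}({\rm Inn}(Q_1))$, hence $H={\rm Aut}_{\psi}(Q_1\times Q_2)$ via Proposition~\ref{shsequence}). Your hypothesis check and the explanation of why faithfulness kills the centraliser match the paper's reasoning, so nothing is missing.
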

In particular case when $\psi(Q_2)=\{id\}$, Corollary~\ref{genhorv1} imply the following result, which generalizes the result \cite[Proposition 5.3]{Horvat} to the case when $Q_2$ is not necessarily connected.
\begin{corollary}\label{Horvatdidntit}Let $Q_1$, $Q_2$ be finite quandles, and $\psi(Q_2)=\{id\}$. If $Q_1$ is faithful and connected, then ${\rm Aut}(B(Q_1\times_{\psi}Q_2))={\rm Aut}(Q_1)\times {\rm Aut}(Q_2)$.
\end{corollary}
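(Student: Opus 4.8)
The plan is to deduce this directly from Corollary~\ref{genhorv2} (equivalently, from Theorem~\ref{automconprod} combined with Corollary~\ref{faithfulaut}) by specializing to the trivial action. First I would check that the hypotheses of Corollary~\ref{genhorv2} hold in this setting: $Q_1$ and $Q_2$ are finite, $Q_1$ is faithful and connected, and since $\psi(Q_2)=\{id\}$ we trivially have $id\in\psi(Q_2)$. Hence Corollary~\ref{genhorv2} applies and gives ${\rm Aut}(B(Q_1\times_{\psi}Q_2))={\rm Aut}_{\psi}(Q_1\times Q_2)$, so it only remains to evaluate the right-hand side.

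The second step is to identify ${\rm Aut}_{\psi}(Q_1\times Q_2)$ when $\psi$ is trivial. By Lemma~\ref{compat}, ${\rm Aut}_{\psi}(Q_1\times Q_2)$ consists of the pairs $(\alpha,\beta)\in{\rm Aut}(Q_1)\times{\rm Aut}(Q_2)$ satisfying $\widehat{\beta(f)}=\alpha\widehat{f}\alpha^{-1}$ for all $f\in Q_2$. Since $\psi(Q_2)=\{id\}$ we have $\widehat{f}=id$ for every $f\in Q_2$, so the compatibility relation reduces to $id=\alpha\, id\,\alpha^{-1}$, which is automatically satisfied for any $\alpha$. Therefore ${\rm Aut}_{\psi}(Q_1\times Q_2)={\rm Aut}(Q_1)\times{\rm Aut}(Q_2)$, and together with the first step this yields the stated equality.

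I do not expect a genuine obstacle: the statement is a corollary in the literal sense, and the only thing one actually verifies is the (immediate) collapse of the defining relation of ${\rm Aut}_{\psi}$ when the action $\psi$ is trivial. The one point worth flagging is interpretive rather than technical: for trivial $\psi$ the biquandle $B(Q_1\times_{\psi}Q_2)$ is precisely the product biquandle $B(Q_1\times Q_2)$ of \cite{Horvat, Kamada}, so this statement recovers and mildly strengthens Proposition~\ref{classicalhorvataut} --- the improvement over \cite[Proposition 5.3]{Horvat} being that $Q_2$ need not be connected, which is exactly the content that Theorem~\ref{automconprod} supplies once $Q_1$ is faithful.
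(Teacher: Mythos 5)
Your proposal is correct and follows essentially the same route as the paper: the statement is obtained by specializing Theorem~\ref{automconprod} together with Corollary~\ref{faithfulaut} (that is, Corollary~\ref{genhorv2}) to trivial $\psi$, where the compatibility condition of Lemma~\ref{compat} collapses and ${\rm Aut}_{\psi}(Q_1\times Q_2)={\rm Aut}(Q_1)\times{\rm Aut}(Q_2)$. Note that the paper's text actually cites Corollary~\ref{genhorv1}, whose hypotheses require $Q_2$ connected; this appears to be a slip, and your appeal to Corollary~\ref{genhorv2} is the correct reference, exactly because the point of this corollary is to drop connectedness of $Q_2$.
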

Note that the majority of finite connected quandles are faithful. For example, there are $790$ connected quandles in the GAP \cite{GAP} package Rig (http://code.google.com/p/rig/) written by L.~Vendramin (see also \cite{Ven}). These comprise all connected quandles of order at most $47$. Among these $790$ finite connected quandle of order at most $47$ only $66$ quandles are not faithful.

Another result which follows from Corollary~\ref{genhorv2} is regarding automorphisms of the holomorph of a finite connected quandle.

\begin{corollary}\label{authol}
Let $Q$ be a finite faithful connected quandle. Then ${\rm Aut}({\rm Hol}(Q))={\rm Aut}(Q)$.
\end{corollary}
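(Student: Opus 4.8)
The plan is to deduce the statement directly from Corollary~\ref{genhorv2}. Recall from Remark~\ref{rem-holomorph} that ${\rm Hol}(Q)=B(Q\times_{\psi}P)$, where $P={\rm Conj}_{-1}({\rm Aut}(Q))$ and $\psi:P\to{\rm Conj}_{-1}({\rm Aut}(Q))$ is the identity homomorphism. First I would verify that the hypotheses of Corollary~\ref{genhorv2} hold: since $Q$ is finite, the group ${\rm Aut}(Q)$ is finite, hence $P$ is a finite quandle; the identity map $\psi$ is trivially a quandle homomorphism; and $\psi(P)={\rm Aut}(Q)\ni{\rm id}$. As $Q$ is assumed faithful and connected, Corollary~\ref{genhorv2} applies and gives ${\rm Aut}({\rm Hol}(Q))={\rm Aut}_{\psi}(Q\times P)$.

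It then remains to identify the group ${\rm Aut}_{\psi}(Q\times P)$. By Lemma~\ref{compat}, a pair $(\alpha,\beta)\in{\rm Aut}(Q)\times{\rm Aut}(P)$ lies in ${\rm Aut}_{\psi}(Q\times P)$ precisely when $\widehat{\beta(f)}=\alpha\widehat{f}\alpha^{-1}$ for all $f\in P$. Since $\psi$ is the identity we have $\widehat{f}=f$, so this condition reads $\beta(f)=\alpha f\alpha^{-1}$ for all $f\in P$; in particular $\beta$ is determined by $\alpha$, namely $\beta=c_{\alpha}$, conjugation by $\alpha$. Conversely, for every $\alpha\in{\rm Aut}(Q)$ the map $c_{\alpha}\colon f\mapsto\alpha f\alpha^{-1}$ is a bijection of ${\rm Aut}(Q)$ respecting the conjugation operation $f*g=gfg^{-1}$ of the quandle $P$, so $c_{\alpha}\in{\rm Aut}(P)$ and $(\alpha,c_{\alpha})\in{\rm Aut}_{\psi}(Q\times P)$.

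Consequently $\alpha\mapsto(\alpha,c_{\alpha})$ is a bijection ${\rm Aut}(Q)\to{\rm Aut}_{\psi}(Q\times P)$, and since $c_{\alpha_1\alpha_2}=c_{\alpha_1}c_{\alpha_2}$ it is a group isomorphism; combining with the previous paragraph yields ${\rm Aut}({\rm Hol}(Q))={\rm Aut}_{\psi}(Q\times P)\cong{\rm Aut}(Q)$, which establishes the claim. I do not expect a real obstacle here, since the substantive work is carried by Corollary~\ref{genhorv2} (itself obtained from Theorem~\ref{automconprod} and Corollary~\ref{faithfulaut}); the only point requiring a little care is checking that the compatibility condition defining ${\rm Aut}_{\psi}(Q\times P)$ collapses to ``$\beta$ is conjugation by $\alpha$'' and that such conjugations are indeed quandle automorphisms of the conjugation quandle $P$.
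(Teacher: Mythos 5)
Your proposal is correct and follows exactly the paper's route: the paper also deduces the statement from Corollary~\ref{genhorv2} applied to ${\rm Hol}(Q)=B(Q\times_{\psi}{\rm Conj}_{-1}({\rm Aut}(Q)))$ and then simply asserts ${\rm Aut}_{\psi}(Q\times{\rm Conj}_{-1}({\rm Aut}(Q)))={\rm Aut}(Q)$, which is precisely the identification (pairs $(\alpha,c_{\alpha})$ with $c_{\alpha}$ conjugation by $\alpha$) that you spell out via Lemma~\ref{compat}. Your extra verification that $c_{\alpha}$ is a quandle automorphism of ${\rm Conj}_{-1}({\rm Aut}(Q))$ and that $\alpha\mapsto(\alpha,c_{\alpha})$ is a group isomorphism just fills in details the paper leaves implicit.
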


\begin{proof}In this situation ${\rm Aut}_{\psi}(Q\times{\rm Conj}_{-1}({\rm Aut}(Q)))={\rm Aut}(Q)$.
\end{proof}

Corollary~\ref{authol} implies the following result which is a kind of analogue to \cite[Main Theorem]{BrayWil} for biquandles.

\begin{corollary}\label{seq-biquandles}
There exists a sequence of finite biquandles $B_1,B_2,\dots$ such that $|B_k|\to\infty$, and $|{\rm Aut}(B_k)|/|B_k|\to 0$.
\end{corollary}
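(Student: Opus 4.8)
The plan is to apply Corollary~\ref{authol} to an explicit infinite family of finite faithful connected quandles, and the natural candidates are the dihedral quandles $\R_n$ of odd order, which are faithful and connected (Example~\ref{coreex}). Concretely, I would set $Q_k=\R_{2k+1}$ for $k\geq 1$ and let $B_k={\rm Hol}(Q_k)$ be the holomorph biquandle of $Q_k$ introduced in Remark~\ref{rem-holomorph}.

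First I would record the size of $B_k$. Since ${\rm Hol}(Q_k)$ is defined on the set $Q_k\times{\rm Aut}(Q_k)$ (Remark~\ref{rem-holomorph}), we have $|B_k|=|Q_k|\cdot|{\rm Aut}(Q_k)|$, so in particular $|B_k|\geq|Q_k|=2k+1\to\infty$ as $k\to\infty$. Next, each $Q_k=\R_{2k+1}$ is a finite faithful connected quandle, so Corollary~\ref{authol} gives ${\rm Aut}(B_k)={\rm Aut}\big({\rm Hol}(Q_k)\big)={\rm Aut}(Q_k)$. Combining these two facts,
$$
\frac{|{\rm Aut}(B_k)|}{|B_k|}=\frac{|{\rm Aut}(Q_k)|}{|Q_k|\,|{\rm Aut}(Q_k)|}=\frac{1}{|Q_k|}=\frac{1}{2k+1}\longrightarrow 0,
$$
which is exactly the asserted behaviour. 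Thus $B_1,B_2,\dots$ is the desired sequence.

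There is essentially no real obstacle in this argument once Corollary~\ref{authol} is available; the only point requiring a little care is to choose the underlying quandles so that the hypotheses of Corollary~\ref{authol} (finite, faithful, connected) are met. The dihedral quandles $\R_{2k+1}$ are the simplest such choice, but any infinite family of finite faithful connected quandles would work equally well — for instance $\R_p$ with $p$ ranging over the odd primes — since the conclusion depends only on $|{\rm Aut}(B_k)|/|B_k|=1/|Q_k|$ tending to $0$.
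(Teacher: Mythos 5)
Your proposal is correct and is essentially identical to the paper's own proof: both take $B_k={\rm Hol}({\rm R}_{2k+1})$, note $|B_k|=(2k+1)\,|{\rm Aut}({\rm R}_{2k+1})|\to\infty$, invoke Corollary~\ref{authol} (using that ${\rm R}_{2k+1}$ is finite, faithful and connected) to get ${\rm Aut}(B_k)={\rm Aut}({\rm R}_{2k+1})$, and conclude $|{\rm Aut}(B_k)|/|B_k|=1/(2k+1)\to 0$. No gaps.
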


\begin{proof}Let $B_k={\rm Hol}({\rm R}_{2k+1})$ be the holomorph of the dihedral quandle of order $2k+1$. It is clear that $|B_k|=(2k+1)|{\rm Aut}({\rm R}_{2k+1})|$, and therefore $|B_k|\to \infty$. Since the dihedral quandle ${\rm R}_{2k+1}$ is connected and faithful, from Corollary~\ref{authol} follows that ${\rm Aut}(B_k)={\rm Aut}({\rm R}_{2k+1})$. Therefore $|{\rm Aut}(B_k)|/|B_k|=1/(2k+1)\to 0$.
\end{proof}
If $G$ is a finite group of order at least $3$, then $G$ is known to have  a non-trivial automorphism. Similar results holds for quandles, namely, if $Q$ is a finite quandle of order at least $2$, then $Q$ has a non-trivial automorphism. These results follow from the fact that groups and quandles have families of natural automorphisms, namely, inner automorphisms. On the contrary, biquandles do not have any natural automorphisms. So, the following problem appears naturally. 
\begin{problem}Does there exist an integer $N$ such that every biquandle $B$ of order at least $N$ has a non-trivial automorphism?
\end{problem}
We believe that the problem formulated above has a negative solution, i.~e. that there exist a sequence of finite biquandles $B_1,B_2,\dots$ such that $|B_k|\to\infty$ and ${\rm Aut}(B_i)$ is trivial for each $i$. However, we could not find such a sequence of biquandles.

\subsection{Automorphisms of biquandles and coverings} Let $B$ be a biquandle and $p: \widetilde{Q} \to \mathcal{Q}(B)$ be a quandle covering. In this concluding section, we find some connections between 
automorphisms of the biquandle $B$ and automorphisms of the covering quandle~$\widetilde{Q}$.

\begin{proposition}\label{biquandle-auto-in-lifting}
Let $\widetilde{Q}$ be a simply conntected quandle, $p:\widetilde{Q}\to Q$ be a quandle covering, $A=\{\beta_y~|~y \in Q \}$ be a biquandle structure on $Q$, and $\widetilde{A}=\{\alpha_{\tilde{y}}~|~\tilde{y} \in \widetilde{X} \}$ be a biquandle structure on $\widetilde{Q}$ obtained from $A$ using Theorem~\ref{lifting-biquandle-structure}. If $B$ denote the biquandle obtained from $Q$ using the biquandle structure $A$, then
$${\rm Aut}(B) \leq N_{{\rm Aut}(\widetilde{Q})}\{\alpha_{\tilde{y}}~|~\tilde{y} \in \widetilde{Q} \}.$$
\end{proposition}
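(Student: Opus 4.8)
The plan is to lift each $f\in\Aut(B)$ through the covering $p$ to a quandle automorphism $\tilde f$ of $\widetilde Q$, to verify that $\tilde f$ is in fact an automorphism of the lifted biquandle $\widetilde B$, and then to apply Theorem~\ref{biquandlestructureautomorphism} to $\widetilde B$. Since $\mathcal{Q}(B)=Q$ we have $\Aut(B)\le\Aut(Q)$, so any $f\in\Aut(B)$ is in particular a quandle automorphism of $Q$. Because $\widetilde Q$ is simply connected, the lifting criterion Theorem~\ref{lifting theorem} applies exactly as in the proof of Theorem~\ref{lifting-biquandle-structure}: fixing a base point $\tilde q\in\widetilde Q$, writing $q=p(\tilde q)$ and choosing $\tilde q'\in p^{-1}(f(q))$, the quandle homomorphisms $fp\colon(\widetilde Q,\tilde q)\to(Q,f(q))$ and $p\colon(\widetilde Q,\tilde q')\to(Q,f(q))$ admit a unique common lift $\tilde f\colon(\widetilde Q,\tilde q)\to(\widetilde Q,\tilde q')$ with $p\tilde f=fp$; lifting $f^{-1}$ in the same way and using uniqueness of pointed lifts produces $\tilde f^{-1}$, so $\tilde f\in\Aut(\widetilde Q)$, and $f\mapsto\tilde f$ is injective since $p$ is surjective. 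With a coherent choice of base points (functoriality of lifts, as in Eisermann's covering theory) this is a group monomorphism $\Aut(B)\hookrightarrow\Aut(\widetilde Q)$.

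Next I would show that $\tilde f\in\Aut(\widetilde B)$. Since $f\in\Aut(B)$, applying $f$ to the defining relation $x\overline{*}y=\beta_y(x)$ in $B$ gives $f\beta_yf^{-1}=\beta_{f(y)}$ for all $y\in Q$ (this is the content of Theorem~\ref{biquandlestructureautomorphism} for $B$). Now fix $\tilde y\in\widetilde Q$ and put $y=p(\tilde y)$. Both $\tilde x\mapsto\tilde f(\tilde x\,\overline{*}\,\tilde y)=\tilde f\bigl(\alpha_{\tilde y}(\tilde x)\bigr)$ and $\tilde x\mapsto\tilde f(\tilde x)\,\overline{*}\,\tilde f(\tilde y)=\alpha_{\tilde f(\tilde y)}\bigl(\tilde f(\tilde x)\bigr)$ are quandle automorphisms of $\widetilde Q$, and post-composing either with $p$ and using the relation $p\alpha_{\tilde z}=\beta_{p(\tilde z)}p$ from~(\ref{lifting-condition}) together with $p\tilde f=fp$ and $f\beta_yf^{-1}=\beta_{f(y)}$, one finds that both become the quandle homomorphism $\tilde x\mapsto f\bigl(\beta_y(p(\tilde x))\bigr)$ of $\widetilde Q$ into $Q$. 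Hence the two maps are lifts of the same quandle homomorphism along $p$; as $\widetilde Q$ is connected, uniqueness of lifts forces them to coincide, so $\tilde f$ respects $\overline{*}$, and the identical argument with $x\underline{*}y=\beta_y(x*y)$ shows $\tilde f$ respects $\underline{*}$. Thus $\tilde f\in\Aut(\widetilde B)$. Since $\widetilde B$ is obtained from $\widetilde Q$ using the biquandle structure $\widetilde A=\{\alpha_{\tilde y}\mid\tilde y\in\widetilde Q\}$, Theorem~\ref{biquandlestructureautomorphism} gives $\Aut(\widetilde B)\le N_{\Aut(\widetilde Q)}\{\alpha_{\tilde y}\mid\tilde y\in\widetilde Q\}$, and composing with the monomorphism of the previous paragraph yields $\Aut(B)\le N_{\Aut(\widetilde Q)}\{\alpha_{\tilde y}\mid\tilde y\in\widetilde Q\}$, which is the assertion.

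The main obstacle is the appeal to ``uniqueness of lifts'' in the middle step: since the deck transformation group of $p$ need not be trivial, one must genuinely produce a single point at which the two lifts agree before concluding that they are equal. Taking $\tilde x=\tilde y$ this amounts to the identity $\tilde f\bigl(\alpha_{\tilde y}(\tilde y)\bigr)=\alpha_{\tilde f(\tilde y)}\bigl(\tilde f(\tilde y)\bigr)$, i.e.\ that $\tilde f$ commutes with the diagonal bijection $\tilde z\mapsto\alpha_{\tilde z}(\tilde z)$ of $\widetilde A$; downstairs the corresponding bijection $y\mapsto\beta_y(y)$ commutes with $f$ precisely because $f\in\Aut(B)$, and one must check that the base points used to construct $\widetilde A$ in Theorem~\ref{lifting-biquandle-structure} are chosen so that this compatibility is inherited on $\widetilde Q$. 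Carrying this bookkeeping through carefully is the only genuinely technical part of the argument; everything else is a routine diagram chase.
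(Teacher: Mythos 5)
Your argument is correct and essentially reproduces the paper's own proof: lift $f$ through $p$ by Theorem~\ref{lifting theorem}, transport the downstairs relation $f\beta_y f^{-1}=\beta_{f(y)}$ (Theorem~\ref{biquandlestructureautomorphism} applied to $B$) upstairs via $p\alpha_{\tilde{y}}=\beta_{p(\tilde{y})}p$ and $p\tilde{f}=fp$, and conclude by uniqueness of lifts; your extra detour through $\Aut(\widetilde{B})$ and a second appeal to Theorem~\ref{biquandlestructureautomorphism} is only a repackaging, since the intertwining relation $\tilde{f}\alpha_{\tilde{y}}\tilde{f}^{-1}=\alpha_{\tilde{f}(\tilde{y})}$ already yields $\tilde{f}\in N_{\Aut(\widetilde{Q})}\{\alpha_{\tilde{y}}\}$ directly. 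The base-point subtlety you flag at the end (two lifts of the same map must be matched at a point before uniqueness applies, since the deck group of $p$ need not be trivial) is exactly the step the paper itself disposes of with the bare phrase ``by the uniqueness of the lift,'' so your proposal is no less complete than the published argument on this point.
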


\begin{proof}
For $\tilde{y}\in \widetilde{Q}$ denote by $y=p(\tilde{y})$. By Theorem~\ref{lifting-biquandle-structure} for all $\tilde{y} \in \widetilde{Q}$ we have the equality $p \alpha_{\tilde{y}}= \beta_y p$. Let $\varphi$ be an automorphism from ${\rm Aut}(B)$.  By Theorem \ref{lifting theorem}, $\varphi$ lifts uniquely to $\widetilde{\varphi} \in {\rm Aut}(\widetilde{Q})$ such that $p \widetilde{\varphi}= \varphi p$, so, it is enough to show that $\widetilde{\varphi}$ belongs to $N_{{\rm Aut}(\widetilde{Q})}\{\alpha_{\tilde{y}}~|~\tilde{y} \in \widetilde{Q} \}$. Note that, by Theorem~\ref{biquandlestructureautomorphism}, $\varphi \in N_{{\rm Aut}(Q)} \{\beta_y~|~y \in Q\}$. If $\tilde{x} \in \widetilde{Q}$ and $x:= p(\tilde{x})$, then
\begin{align}
\notag p (\widetilde{\varphi} \alpha_{\tilde{x}}) &= (\varphi p) \alpha_{\tilde{x}}= \varphi (\beta_x p)\\
\notag&= (\beta_z \varphi) p,~\textrm{for some}~z \in Q\\
\notag&= \beta_z (p \widetilde{\varphi})= (p \alpha_{\tilde{z}}) \widetilde{\varphi}= p (\alpha_{\tilde{z}} \widetilde{\varphi}).
\end{align}
By the uniqueness of the lift, it follows that $\widetilde{\varphi} \alpha_{\tilde{x}}=\alpha_{\tilde{z}} \widetilde{\varphi}$, and the proof is complete.
\end{proof}

\begin{corollary}
Let $\widetilde{Q}$ be a simply connected quandle, $p: \widetilde{Q} \to Q$ be a quandle covering, $f$ be an automorphism of $Q$, and $\tilde{f} \in {\rm Aut}(\widetilde{Q})$ be the lift of $f$  with respect to $p$. Denote by $B,\widetilde{B}$ biquandles obtained from $Q,\widetilde{Q}$ using constant biquandle structures defined by $f,\tilde{f}$, respectively. Then ${\rm Aut}(B) \leq {\rm Aut}(\widetilde{B})$.
\end{corollary}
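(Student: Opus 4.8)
The plan is to reduce the statement to Proposition~\ref{constantbiqstr} and Proposition~\ref{biquandle-auto-in-lifting}. Since $B$ and $\widetilde{B}$ are by hypothesis the biquandles obtained from $Q$ and $\widetilde{Q}$ via the constant biquandle structures $\{\beta_{y}=f~|~y\in Q\}$ and $\{\alpha_{\tilde{y}}=\tilde{f}~|~\tilde{y}\in\widetilde{Q}\}$, Proposition~\ref{constantbiqstr} gives $\mathrm{Aut}(B)=C_{\mathrm{Aut}(Q)}(f)$ and $\mathrm{Aut}(\widetilde{B})=C_{\mathrm{Aut}(\widetilde{Q})}(\tilde{f})$. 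So it suffices to realise $C_{\mathrm{Aut}(Q)}(f)$ as a subgroup of $C_{\mathrm{Aut}(\widetilde{Q})}(\tilde{f})$, and this comes for free once the constant family $\{\tilde{f}\}$ is recognised as a lift of the constant family $\{f\}$ in the sense of Theorem~\ref{lifting-biquandle-structure}.

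Concretely, I would first observe that the constant family $\widetilde{A}=\{\alpha_{\tilde{y}}=\tilde{f}~|~\tilde{y}\in\widetilde{Q}\}$ is a biquandle structure on $\widetilde{Q}$ (every constant family is), and that, because $p\tilde{f}=fp$ by the defining property of the lift $\tilde{f}$, it satisfies the lifting relation $(\ref{lifting-condition})$, i.e. $\beta_{y}p=p\alpha_{\tilde{y}}$ whenever $p(\tilde{y})=y$, relative to $A=\{\beta_{y}=f~|~y\in Q\}$. Moreover $\widetilde{A}$ is among the biquandle structures produced by the construction of Theorem~\ref{lifting-biquandle-structure}: for each $y\in Q$ choose $\tilde{y}\in p^{-1}(y)$ and lift $\beta_{y}p=fp:(\widetilde{Q},\tilde{y})\to(Q,f(y))$ to the point $\tilde{f}(\tilde{y})\in p^{-1}(f(y))$; since $\widetilde{Q}$ is simply connected, Theorem~\ref{lifting theorem} guarantees a unique such lift, and that lift is $\tilde{f}$ itself. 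Then I would apply Proposition~\ref{biquandle-auto-in-lifting} to $A$ and $\widetilde{A}$, obtaining
$$\mathrm{Aut}(B)\leq N_{\mathrm{Aut}(\widetilde{Q})}\{\alpha_{\tilde{y}}~|~\tilde{y}\in\widetilde{Q}\}=N_{\mathrm{Aut}(\widetilde{Q})}\{\tilde{f}\}=C_{\mathrm{Aut}(\widetilde{Q})}(\tilde{f})=\mathrm{Aut}(\widetilde{B}),$$
where the second equality holds because the normaliser of a one-element subset is the centraliser of its element, and the last equality is Proposition~\ref{constantbiqstr} applied to $\widetilde{Q}$ and $\tilde{f}$.

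The step I expect to be the main obstacle is the identification of the lifted biquandle structure with the constant family $\{\tilde{f}\}$: with a careless choice of base points, the construction of Theorem~\ref{lifting-biquandle-structure} could instead return lifts of $f$ that differ from $\tilde{f}$ by deck transformations of $p$, and then the equality $N_{\mathrm{Aut}(\widetilde{Q})}\{\alpha_{\tilde{y}}\}=C_{\mathrm{Aut}(\widetilde{Q})}(\tilde{f})$ would fail; pinning the base-point data down through $\tilde{f}$ as above is exactly what avoids this. If one prefers to bypass Proposition~\ref{biquandle-auto-in-lifting}, the conclusion can be obtained directly in the spirit of the proofs of Theorem~\ref{lifting-biquandle-structure} and Proposition~\ref{biquandle-auto-in-lifting}: given $\varphi\in\mathrm{Aut}(B)=C_{\mathrm{Aut}(Q)}(f)$, lift $\varphi p$ through $p$ to $\widetilde{\varphi}\in\mathrm{Aut}(\widetilde{Q})$, note that $p(\widetilde{\varphi}\tilde{f})=\varphi fp=f\varphi p=p(\tilde{f}\widetilde{\varphi})$ forces $\widetilde{\varphi}\tilde{f}=\tilde{f}\widetilde{\varphi}$ by uniqueness of lifts, and verify that $\varphi\mapsto\widetilde{\varphi}$ is an injective group homomorphism $C_{\mathrm{Aut}(Q)}(f)\hookrightarrow C_{\mathrm{Aut}(\widetilde{Q})}(\tilde{f})=\mathrm{Aut}(\widetilde{B})$.
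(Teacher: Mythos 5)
Your proposal is correct and follows essentially the same route as the paper, whose proof is exactly the combination of Proposition~\ref{biquandle-auto-in-lifting} with the identification ${\rm Aut}(B)=C_{{\rm Aut}(Q)}(f)$ and ${\rm Aut}(\widetilde{B})=C_{{\rm Aut}(\widetilde{Q})}(\tilde{f})$ from Proposition~\ref{constantbiqstr}. Your extra care in choosing base points so that the lifted biquandle structure of Theorem~\ref{lifting-biquandle-structure} is the constant family $\{\tilde{f}\}$ (making the normaliser a centraliser) is a detail the paper leaves implicit, not a different method.
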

\begin{proof}
The proof follows from Proposition \ref{biquandle-auto-in-lifting} and \cite[Corollary 4.2]{Horvat}.
\end{proof}

\end{document}